\newtheorem{lemma}{Lemma}[section]
\newtheorem{theorem}[lemma]{Theorem}
\newtheorem{corollary}[lemma]{Corollary}
\newtheorem{proposition}[lemma]{Proposition}
\newtheorem{conjecture}[lemma]{Conjecture}
\newtheorem{question}[lemma]{Question}
\newtheorem{observation}[lemma]{Observation}
\theoremstyle{definition}
\newtheorem{definition}[lemma]{Definition}
\newtheorem{construction}[lemma]{Construction}
\newtheorem{example}[lemma]{Example}
\newtheorem{remark}[lemma]{Remark}
\DeclareMathOperator{\bl}{_{\!gr.able}}
\newcommand{\iso}{\cong}
\newcommand{\then}{\ensuremath{\Longrightarrow}}
\renewcommand{\iff}{\ensuremath{\Longleftrightarrow}}
\DeclareMathAlphabet{\mathpzc}{OT1}{pzc}{m}{it}
\DeclareMathOperator{\modules}{mod} \renewcommand{\mod}{\modules}
\DeclareMathOperator{\proj}{proj}
\DeclareMathOperator{\add}{add}
\DeclareMathOperator{\End}{End}
\DeclareMathOperator{\Hom}{Hom}
\DeclareMathOperator{\Ext}{Ext}
\DeclareMathOperator{\Aut}{Aut}
\DeclareMathOperator{\Rad}{Rad}
\DeclareMathOperator{\TopOfModule}{top} \renewcommand{\top}{\TopOfModule}
\DeclareMathOperator{\Cok}{Cok}
\DeclareMathOperator{\Img}{Im} \renewcommand{\Im}{\Img}
\DeclareMathOperator{\Cone}{Cone}
\DeclareMathOperator{\gld}{gl.\!dim}
\newcommand{\gr}{_{\rm gr}}
\newenvironment{smallpmatrix}
	       {\left( \! \begin{smallmatrix}}
	       {\end{smallmatrix} \! \right)}
\def\clap#1{\hbox to 0pt{\hss#1\hss}}
\newcommand{\leftsub}[2]{{\vphantom{#2}}_{#1}{#2}}
\tikzset{>=stealth',
         vertex/.style={circle,draw=black,inner sep=1.5pt,outer sep=2pt},
         tvertex/.style={inner sep=1pt,font=\scriptsize},
         gap/.style={fill=white,inner sep=1pt}}
\newcommand{\arrow}[2][20]
 {
  \hspace{-5pt}
  \begin{tikzpicture}
   \node (A) at (0,0) {};
   \node (B) at (#1pt,0) {};
   \draw [#2] (A) -- (B);
  \end{tikzpicture}
  \hspace{-5pt}
 }
\newcommand{\arrowl}[3][20]
 {
  \hspace{-5pt}
  \begin{tikzpicture}
   \node (A) at (0,0) {};
   \node (B) at (#1pt,0) {};
   \draw [#2] (A) -- node [above] {$#3$} (B);
  \end{tikzpicture}
  \hspace{-5pt}
 }
\renewcommand{\to}[1][20]{\arrow[#1]{->}}
\newcommand{\tol}[2][20]{\arrowl[#1]{->}{#2}}
\newcommand{\epi}[1][20]{\arrow[#1]{->>}}
\newcommand{\monol}[2][20]{\arrowl[#1]{>->}{#2}}
\renewcommand{\mapsto}[1][20]{\arrow[#1]{|->}}
\title[Image of derived cat.\ in cluster cat.]{The image of the derived category in the cluster category}
\author[Amiot]{Claire Amiot}
\address{Institut de Recherche Math\'ematique Avanc\'ee, 7 rue Ren\'e Descartes, 67084 Strasbourg Cedex, France}
\email{amiot@math.unistra.fr}
\author[Oppermann]{Steffen Oppermann}
\address{Institutt for matematiske fag\\ NTNU\\ 7491 Trondheim\\ Norway}
\email{Steffen.Oppermann@math.ntnu.no}
\begin{document}

\begin{abstract}
Cluster categories of hereditary algebras have been introduced as orbit categories of their derived categories. Keller has pointed out that for non-hereditary algebras orbit categories need not be triangulated, and he introduced the notion of triangulated hull to overcome this problem.

In this paper we study the image if the natural functor from the bounded derived category to the cluster category, that is we investigate how far the orbit category is from being the cluster category.

We show that for wide classes of non-piecewise hereditary algebras the orbit category is never equal to the cluster category.
\end{abstract}

\maketitle

\tableofcontents

\section{Introduction}

In \cite{CC} the first author introduced cluster categories $\mathscr{C}_{\Lambda}$ for finite-dimensional algebras $\Lambda$ with $\gld \Lambda \leqslant 2$. They are defined to be triangulated orbit categories in the sense of Keller (see \cite{K_orbit}). For hereditary algebras $\Lambda$, Keller has shown that the cluster category $\mathscr{C}_{\Lambda}$ and the orbit category (see Definition~\ref{def.clustercat_orbitcat}) coincide. However he pointed out that this need not be the case in general. The aim of this paper is to investigate what this difference is.

Our approach on this aim is as follows: By \cite{CC}, under a technical assumption which we will make throughout this paper (called $\tau_2$-finiteness -- see Definition~\ref{def.tau-2-fin}), the image of the algebra $\Lambda$ is cluster tilting in the cluster category. We denote the endomorphism ring of this cluster tilting object by $\widetilde{\Lambda}$. It follows that there is a natural functor $\mathscr{C}_{\Lambda} \to \mod \widetilde{\Lambda}$. We will point out that $\widetilde{\Lambda}$ carries a natural $\mathbb{Z}$-grading. Then we will show the following:
\begin{theorem}[see Theorem~\ref{theorem.imgr}]
The orbit category coincides with the cluster category if and only if any $\widetilde{\Lambda}$-module is gradable.

More precisely, the objects in the orbit category are precisely those objects in the cluster category, for which the corresponding $\widetilde{\Lambda}$-module is gradable.
\end{theorem}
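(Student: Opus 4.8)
The plan is to compare $\mathscr{C}_\Lambda$ with $\mod\widetilde{\Lambda}$ and, simultaneously, $\mathscr{D}^b(\Lambda)$ with the category $\mod^{\mathbb{Z}}\widetilde{\Lambda}$ of graded $\widetilde{\Lambda}$-modules, through one commutative square of functors. Write $\pi\colon\mathscr{D}^b(\Lambda)\to\mathscr{C}_\Lambda$ for the canonical functor, $F$ for the autoequivalence of $\mathscr{D}^b(\Lambda)$ defining the orbit category, $H=\Hom_{\mathscr{C}_\Lambda}(\Lambda,-)\colon\mathscr{C}_\Lambda\to\mod\widetilde{\Lambda}$ (with $\Lambda$ read as its cluster tilting image), and $U\colon\mod^{\mathbb{Z}}\widetilde{\Lambda}\to\mod\widetilde{\Lambda}$ for the grading-forgetting functor; by definition a module is gradable exactly when it lies in the essential image of $U$. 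First I would record the graded lift $\widehat{H}\colon\mathscr{D}^b(\Lambda)\to\mod^{\mathbb{Z}}\widetilde{\Lambda}$, $\widehat{H}(\tilde{X})=\bigoplus_{i\in\mathbb{Z}}\Hom_{\mathscr{D}^b(\Lambda)}(\Lambda,F^i\tilde{X})$, graded by $i$, with the $\widetilde{\Lambda}=\bigoplus_j\Hom_{\mathscr{D}^b(\Lambda)}(\Lambda,F^j\Lambda)$-action given by applying powers of $F$ and composing; this is precisely the grading making $\widetilde{\Lambda}$ graded as in the paper. The formula for morphisms in the orbit category yields a natural isomorphism $U\widehat{H}\cong H\pi$, so the square commutes. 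This already gives one inclusion: if $X\cong\pi\tilde{X}$ lies in the orbit category, then $H(X)\cong U\widehat{H}(\tilde{X})$ is gradable.

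For the converse I would pull an object $X$ with $H(X)$ gradable back into the image of $\pi$. Choose a grading, that is a graded module $\widehat{M}$ with $U\widehat{M}\cong H(X)$. The heart of the matter is an unfolding statement: $\widehat{H}$ is essentially surjective up to degree shift, so that $\widehat{M}\cong\widehat{H}(\tilde{X})$ for some $\tilde{X}\in\mathscr{D}^b(\Lambda)$. I would deduce this as the graded (covering) analogue of the density of $H$: by the cluster-tilting formalism, $H$ induces an equivalence $\mathscr{C}_\Lambda/\add(\pi(\Lambda[1]))\xrightarrow{\sim}\mod\widetilde{\Lambda}$, and in particular $H$ is dense; the orbit functor $\pi$ is a $\langle F\rangle\cong\mathbb{Z}$-covering, and carrying the construction of an $H$-preimage through this covering produces, from the graded module $\widehat{M}$, a genuine object $\tilde{X}$ of $\mathscr{D}^b(\Lambda)$ with $\widehat{H}(\tilde{X})\cong\widehat{M}$.

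Granting the unfolding, set $X'=\pi\tilde{X}$. Then $H(X')\cong U\widehat{H}(\tilde{X})\cong U\widehat{M}\cong H(X)$, so $X$ and $X'$ are isomorphic in $\mathscr{C}_\Lambda/\add(\pi(\Lambda[1]))$ under the equivalence above; hence $X\oplus P\cong X'\oplus Q$ in $\mathscr{C}_\Lambda$ for suitable $P,Q\in\add(\pi(\Lambda[1]))$. Since $\pi(\Lambda[1])$ and $X'=\pi\tilde{X}$ lie in the image of $\pi$, and this image is closed under direct summands (as $\mathscr{D}^b(\Lambda)$ is Krull--Schmidt and the orbit category shares its objects), $X$ itself lies in the image of $\pi$, i.e.\ in the orbit category. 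This is the precise statement. The global equivalence then follows formally: the orbit category equals $\mathscr{C}_\Lambda$ iff $\pi$ is dense iff $H(X)$ is gradable for every $X$, and since $H$ is dense this is exactly the condition that every $\widetilde{\Lambda}$-module be gradable.

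The main obstacle is the unfolding step, i.e.\ the essential surjectivity of the graded functor $\widehat{H}$. Densities of this kind are proved by an explicit construction, realizing a module as $\Hom_{\mathscr{C}_\Lambda}(\Lambda,-)$ of a two-term complex over $\add\Lambda$; the real work is to perform that construction $\langle F\rangle$-equivariantly, so that the resulting object genuinely lives in $\mathscr{D}^b(\Lambda)$ and not merely in $\mathscr{C}_\Lambda$. Controlling the $\add(\pi(\Lambda[1]))$-ambiguity throughout, and verifying that the choice of grading only shifts degree, corresponding to replacing $\tilde{X}$ by some $F^n\tilde{X}$ within the same orbit, is the delicate part of the argument.
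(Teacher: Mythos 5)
You set up the correct commutative square, and your easy direction and the formal reduction of the global statement to the object-wise one agree with the paper. But in the converse direction there is a genuine gap: the ``unfolding'' step --- essential surjectivity of the graded lift $\widehat{H} = \mathtt{M}\pi \colon \mathscr{D}_{\Lambda} \to \mod\gr \widetilde{\Lambda}$ --- is the entire mathematical content of the hard implication, and you assume it (``Granting the unfolding\dots'') rather than prove it, predicting a ``delicate'' $\langle F\rangle$-equivariant construction. In fact no equivariance issue arises, and the paper disposes of this step in a few lines: by the Yoneda lemma, $\mathtt{M}\pi$ restricts to an equivalence $\add\{\mathbb{S}_2^i \Lambda \mid i \in \mathbb{Z}\} \to \proj\gr \widetilde{\Lambda}$, with graded homomorphisms between graded projectives equal to morphism spaces in $\mathscr{D}_{\Lambda}$. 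Hence a graded projective presentation $p \colon P_1 \to P_0$ of the chosen graded lift $\widetilde{\mathtt{M}X}$ \emph{is already} a morphism $p^* \colon P_1^* \to P_0^*$ in the derived category; one sets $Y = \Cone p^*$, applies $\mathtt{M}\pi$ to the triangle, and uses
\[ \mathtt{M}\pi P_1^*[1] = \Hom_{\mathscr{C}_{\Lambda}}(\pi\Lambda, \pi P_1^*[1]) = 0 \]
(immediate from $\pi\Lambda$ being cluster tilting, since $\pi P_1^* \in \add \pi\Lambda$) to conclude $\mathtt{M}\pi Y \iso \Cok p = \widetilde{\mathtt{M}X}$. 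This lifting of a two-term presentation is precisely the ``explicit construction performed equivariantly'' that you gesture at; without it your argument only restates the problem. (Your remark that a degree shift of the grading corresponds to replacing $Y$ by $\mathbb{S}_2^n Y$ is correct but moot once the construction is done, since any one graded lift suffices.)

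A secondary soft spot: to remove the $\add \pi(\Lambda[1])$-ambiguity you invoke closure of $\Im\pi$ under direct summands in $\mathscr{C}_{\Lambda}$, justified by ``$\mathscr{D}_{\Lambda}$ is Krull--Schmidt and the orbit category shares its objects.'' That reasoning is insufficient: a direct summand in $\mathscr{C}_{\Lambda}$ of $\pi Y$ corresponds to an idempotent endomorphism in the orbit category, and orbit categories are not obviously idempotent complete, so sharing objects with $\mathscr{D}_{\Lambda}$ does not by itself yield summand-closure. The paper avoids this by first reducing to $X$ indecomposable (both conditions of the theorem pass to summands in $\mathscr{C}_{\Lambda}$), handling $X \in \add \pi\Lambda[1]$ trivially, and otherwise arguing through the equivalence $\mathscr{C}_{\Lambda}/(\pi\Lambda[1]) \iso \mod\widetilde{\Lambda}$, using that $\mathtt{M}$ reflects isomorphism between objects having no summands in $\add \pi\Lambda[1]$; your final paragraph should be reorganized along these lines rather than through a summand-closure claim.
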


We then use this theorem to carry over results on gradability of modules to the cluster category setup. In particular we show that objects outside the orbit category come in $1$-parameter families (see Theorem~\ref{theorem.param-ungr}). It follows (see Corollary~\ref{corollary.rigid-image}) that all rigid, and hence all cluster tilting objects in the cluster category lie inside the orbit category.

We then focus on the question when the orbit category coincides with the cluster category. The most ambitious hope one could have in this direction is the following:

\begin{conjecture} \label{conjecture}
The orbit category coincides with the cluster category for an algebra $\Lambda$ if and only if $\Lambda$ is piecewise hereditary.
\end{conjecture}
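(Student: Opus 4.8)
The plan is to combine the Main Theorem quoted above (orbit $=$ cluster if and only if every $\widetilde{\Lambda}$-module is gradable) with a direct analysis of the natural grading on $\widetilde{\Lambda}$. Recall that in the global-dimension-two setting $\widetilde{\Lambda}$ is the trivial extension $\Lambda \ltimes \Ext^2_\Lambda(D\Lambda,\Lambda)$, with the $\mathbb{Z}$-grading having $\Lambda = \widetilde{\Lambda}_0$ in degree zero and the bimodule $\Ext^2_\Lambda(D\Lambda,\Lambda)$ in positive degree. Under the equivalence $\mathscr{C}_\Lambda \simeq \mod\widetilde{\Lambda}$ the graded modules correspond to the orbit category and the forgetful functor to its inclusion into $\mathscr{C}_\Lambda$, so ``gradable'' is exactly ``lies in the orbit category.'' The conjecture is therefore equivalent to the purely module-theoretic statement that every $\widetilde{\Lambda}$-module is gradable if and only if $\Lambda$ is piecewise hereditary, and I would prove the two implications separately.

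For the direction ``piecewise hereditary $\Rightarrow$ all modules gradable'' I would exploit that the autoequivalence defining $\mathscr{C}_\Lambda$ (a shift of the Serre functor, $F = S \circ [-2]$) is intrinsic to $D^b(\Lambda)$. If $\Lambda$ is derived equivalent to a hereditary algebra $H$, the equivalence commutes with Serre duality and hence carries $F_\Lambda$ to $F_H$; it therefore induces compatible triangle equivalences $\mathscr{C}_\Lambda \simeq \mathscr{C}_H$ and $(\text{orbit})_\Lambda \simeq (\text{orbit})_H$. Since Keller's theorem gives $(\text{orbit})_H = \mathscr{C}_H$ for hereditary $H$, one concludes $(\text{orbit})_\Lambda = \mathscr{C}_\Lambda$, i.e.\ every $\widetilde{\Lambda}$-module is gradable. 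The only points requiring care are checking that $\tau_2$-finiteness is transported by the equivalence and handling the case in which the hereditary category is a category of coherent sheaves on a weighted projective line rather than $\mod H$; both are manageable.

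For the converse I would argue by contraposition, using covering theory. The $\mathbb{Z}$-grading on $\widetilde{\Lambda}$ is a $\mathbb{Z}$-Galois covering datum, and all modules are gradable precisely when the associated push-down functor is dense. The goal is to show that when $\Lambda$ is \emph{not} piecewise hereditary the triangulated hull $\mathscr{C}_\Lambda$ is strictly larger than the naive orbit category, producing an object whose corresponding module fails to lift to a single fundamental domain of $F$ and is thus non-gradable. Concretely I would seek a self-extension configuration in $D^b(\Lambda)$ whose closure under $F$ forces a nontrivial \emph{monodromy} around the $\mathbb{Z}$-orbit --- exactly the phenomenon excluded by the presence of a hereditary heart. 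Happel's characterization of piecewise hereditary algebras (the existence of a tilting object whose endomorphism category in $D^b(\Lambda)$ is hereditary) is the tool that should convert the hypothesis ``no hereditary heart'' into such a configuration.

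The main obstacle is this converse in full generality, which is precisely why the statement is posed as a conjecture rather than a theorem. For ``wide classes'' of non-piecewise-hereditary algebras one can locate an explicit non-gradable module --- for instance by identifying a concrete periodic configuration in the Auslander--Reiten structure --- and that is what the present paper carries out. But a uniform construction covering \emph{every} $\tau_2$-finite algebra of global dimension at most two that fails to be piecewise hereditary is not available to me: the derived category can be arbitrarily complicated, and translating ``no hereditary heart'' into a guaranteed non-liftable $\widetilde{\Lambda}$-module is the step I do not expect to complete in general. Accordingly I would present the easy implication together with the wide-class cases as theorems, and leave the full converse as the conjecture stated.
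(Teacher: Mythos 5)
Your overall assessment is exactly the paper's position: the statement is posed there as a conjecture, and the paper offers no complete proof. It cites Keller for the ``if'' direction, and proves the ``only if'' direction only for two wide classes (Theorem~\ref{theorem.fracCY} for fractional Calabi--Yau type objects, Theorem~\ref{thm.cycles} for quivers with oriented cycles), in both cases following precisely the route you sketch: Theorem~\ref{theorem.imgr} reduces density of $\pi$ to gradability of $\widetilde{\Lambda}$-modules, and non-gradable modules are then produced via the Dowbor--Skowro\'nski criterion (Theorem~\ref{theo.DowborSkowronski}) from indecomposable periodic graded modules with finite-dimensional graded pieces. Your honest refusal to claim the converse in general, and your reasons for it, match the paper; so does your treatment of the easy direction via transport of the Serre functor under derived equivalence.

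One genuine factual slip, though it does not derail the plan: you describe $\widetilde{\Lambda}$ as the trivial extension $\Lambda \ltimes \Ext^2_{\Lambda}(D\Lambda,\Lambda)$. By Theorem~\ref{theo.tensoralg} it is the \emph{tensor algebra} $T_{\Lambda}\Ext^2_{\Lambda}(D\Lambda,\Lambda)$, whose degree-$i$ component is the $i$-fold tensor power of the bimodule; in general it is nonzero in degrees beyond $1$, and $\tau_2$-finiteness is exactly the condition that only finitely many degrees survive. You appear to have conflated $\widetilde{\Lambda}$ with the trivial extension ${\rm T}(\Lambda) = \Lambda \ltimes D\Lambda$ of Skowro\'nski's Question~\ref{question.skowronski}, which the paper mentions only as an analogous problem. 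The slip is harmless for the reduction itself (Theorem~\ref{theorem.imgr} identifies the orbit category with gradable modules regardless, modulo the caveat that the equivalence is $\mathscr{C}_{\Lambda}/(\pi\Lambda[1]) \simeq \mod\widetilde{\Lambda}$ rather than $\mathscr{C}_{\Lambda} \simeq \mod\widetilde{\Lambda}$), but the higher-degree components do matter in the paper's actual converse arguments: for instance the bound on lengths of sequences of minimal relations in Remark~\ref{rem.seq_rel} is the maximal degree of $\widetilde{\Lambda}$, which need not be $1$, and the zigzag construction in the proof of Theorem~\ref{thm.cycles} exploits maps of these higher degrees.
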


Note that this conjecture is of a similar flavor as the following question.

\begin{question}[{Skowro{\'n}ski \cite[Question~1]{Skow_conj}}] \label{question.skowronski}
Let $\Lambda$ be a finite dimensional algebra, ${\rm T}(\Lambda) = \Lambda \ltimes D \Lambda$ the trivial extension, which may be considered as a graded algebra by putting $\Lambda$ in degree $0$ and $D \Lambda$ in degree $1$. When is the push-down functor from graded ${\rm T}(\Lambda)$-modules to ungraded ${\rm T}(\Lambda)$-modules dense?
\end{question}

It should be noted that the ``if'' part of Conjecture~\ref{conjecture} holds (see \cite[Theorem in Section~4]{K_orbit}). Here we collect evidence for the ``only if'' part. To this end we show the following two results:

\begin{theorem}[see Theorem~\ref{theorem.fracCY}]
Assume one object in the derived category of $\Lambda$ satisfies a fractional Calabi-Yau type condition with Calabi-Yau dimension $\neq 1$. Then Conjecture~\ref{conjecture} holds.
\end{theorem}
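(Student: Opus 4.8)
The plan is to deduce the theorem from Theorem~\ref{theorem.imgr}, which reduces the equality of the orbit category and the cluster category to the gradability of all $\widetilde{\Lambda}$-modules. Since the ``if'' part of Conjecture~\ref{conjecture} is already known (by \cite[Theorem in Section~4]{K_orbit}), it suffices to establish the ``only if'' part, and I would do this by contraposition: assuming $\Lambda$ is \emph{not} piecewise hereditary, I would produce a single non-gradable $\widetilde{\Lambda}$-module, which by Theorem~\ref{theorem.imgr} shows that the orbit category is strictly smaller than the cluster category. The fractional Calabi--Yau object is the device that manufactures such a module.

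Let $X \in \mathscr{D}^b(\Lambda)$ be the given object, with $\nu^a X \cong X[b]$ for some $a > 0$ and $b \in \mathbb{Z}$, so that its Calabi--Yau dimension is $b/a \neq 1$. The cluster category is the triangulated hull of the orbit category of the autoequivalence $\nu[-2]$, and the hypothesis rewrites as $(\nu[-2])^a X \cong X[b - 2a]$; thus the image of $X$ in $\mathscr{C}_\Lambda$ is periodic up to shift under a power of the defining functor. First I would use this periodicity together with the $2$-Calabi--Yau duality of $\mathscr{C}_\Lambda$ to exhibit a nonzero self-extension of the image $\pi X$: the self-extension group unwinds, via the orbit-category formula, into $\bigoplus_n \Hom_{\mathscr{D}^b(\Lambda)}(X, (\nu[-2])^{-n} X[1])$, and the fractional Calabi--Yau relation transports a nonzero graded piece of $\End_{\mathscr{D}^b(\Lambda)}(X)$ into extension degree $1$. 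The condition $b/a \neq 1$ enters precisely here: Calabi--Yau dimension $1$ is exactly the $\tau$-periodic (tame tube) situation occurring inside piecewise hereditary algebras, where the resulting family lifts to the derived category and no non-gradable module appears; excluding it is what guarantees that the extension we build is genuinely new.

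From a nonzero self-extension I would build, over the algebraically closed base field, a one-parameter family of pairwise non-isomorphic indecomposable objects of $\mathscr{C}_\Lambda$, namely the middle terms of the self-extensions parametrized by $\mathbb{P}(\Ext^1_{\mathscr{C}_\Lambda}(\pi X, \pi X))$. The gradable objects are exactly the images of objects of $\mathscr{D}^b(\Lambda)$, and in each fixed degree slice these are parametrized by the indecomposables of a fixed finite-dimensional algebra, so they do not occur in genuine one-parameter families. Comparing the sizes of these families --- where Theorem~\ref{theorem.param-ungr} on $1$-parameter families of ungradable objects is the natural tool --- forces all but finitely many members of the constructed family to be non-gradable, which completes the contrapositive.

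The main obstacle is the middle step: producing the self-extension and proving that the resulting family is genuinely non-gradable exactly when $\Lambda$ fails to be piecewise hereditary. One must verify that the extension is non-split and that the family neither collapses nor lifts to $\mathscr{D}^b(\Lambda)$; controlling this requires pinning down how the natural grading on $\widetilde{\Lambda}$ (with $\Lambda$ in degree $0$ and $\Ext^2_\Lambda(D\Lambda, \Lambda)$ in degree $1$) interacts with the periodicity coming from the fractional Calabi--Yau relation, and checking that the degenerate value $b/a = 1$ is the only case that needs to be excluded.
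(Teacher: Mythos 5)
Your top-level reduction --- contraposition plus Theorem~\ref{theorem.imgr}, so that it suffices to produce one non-gradable $\widetilde{\Lambda}$-module when $\Lambda$ is not piecewise hereditary --- matches the paper. But the engine you propose breaks at exactly the step you flag as the main obstacle. First, $\Ext^1_{\mathscr{C}_{\Lambda}}(\pi X, \pi X)$ need not be nonzero under the hypothesis: the paper's own remark after Theorem~\ref{theorem.fracCY} exhibits, for the $3$-cycle with relation $\alpha\beta$, the projective $P_2$ with $P_2 \iso \mathbb{S} P_2$ (so $a = 0$, $b = 1$, Calabi--Yau dimension $0 \neq 1$); then $\mathbb{S}_2^{-n} P_2 \iso P_2[2n]$, so $\Ext^1_{\mathscr{C}_{\Lambda}}(\pi P_2, \pi P_2) = \bigoplus_{n} \Hom_{\mathscr{D}_{\Lambda}}(P_2, P_2[2n+1]) = 0$ and $\pi X$ is rigid. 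Worse, by the paper's Corollary~\ref{corollary.rigid-image} and the theorem on rigid objects, rigid objects always lie in the image of $\pi$ and correspond to gradable modules, so no construction based on $\pi X$ itself can ever witness non-density; the non-gradable module has to be found away from $X$. Second, your claim that gradable modules ``do not occur in genuine one-parameter families'' is false and is not what Theorem~\ref{theorem.param-ungr} says: that theorem concerns only the specific family of grading twists $M_{\alpha}$ and asserts it collapses precisely when $M$ is gradable. If $\widetilde{\Lambda}$ is representation-infinite, one-parameter families consisting entirely of gradable modules abound (push down families of graded modules), so even a genuinely nonsplit self-extension would not force non-gradability of the middle terms --- nor are those middle terms guaranteed indecomposable or pairwise non-isomorphic along $\mathbb{P}(\Ext^1)$.

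The paper's actual mechanism, for which your proposal has no substitute, runs through the shape of the Auslander--Reiten component. The relation $X[a] \iso \mathbb{S}^b X$ propagates to every $Y$ in the component $\mathscr{A}_X$ (Observation~\ref{obs.cy_on_comp}) and rewrites as $\tau^b Y \iso Y[a-b]$; since $a - b \neq 0$ and $\gld \Lambda < \infty$, this gives $\Hom_{\mathscr{D}_{\Lambda}}(Y, \tau^{-i} Y) = 0$ for $i \gg 0$, which forces $\mathscr{A}_X \iso \mathbb{Z}Q$ with $Q$ Dynkin or of type $A_{\infty}$ (Proposition~\ref{prop.AR_shapes}) --- this is where the hypothesis ``Calabi--Yau dimension $\neq 1$'' genuinely enters, not via tubes. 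In the Dynkin case $\Lambda$ is piecewise hereditary (Lemma~\ref{lemma.non-Dynkin}), so in the contrapositive one is in the $\mathbb{Z}A_{\infty}$ case, and there the paper works not with a single object but with the \emph{entire component}: it forms the infinite-dimensional graded $\widetilde{\Lambda}$-module $\mathtt{M} \pi \mathscr{A}_X$ as a limit over the component (Construction~\ref{const.limit}, resting on the stabilization Proposition~\ref{prop.limit}), shows it has finite-dimensional graded pieces but no nonzero finite-dimensional direct summands (Lemma~\ref{lemma.no_finite_summands}) and is periodic of period $a - b$ (Lemma~\ref{lemma.periodic}), and then invokes the Dowbor--Skowro\'nski theorem (Theorem~\ref{theo.DowborSkowronski}) to produce a non-gradable finite-dimensional module. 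Repairing your argument would essentially mean rebuilding this limit construction; the self-extension route cannot be patched.
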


\begin{theorem}[see Theorem~\ref{thm.cycles}]
Assume the quiver of $\Lambda$ contains an oriented cycle. Then the orbit category is strictly smaller than the cluster category. In particular Conjecture~\ref{conjecture} holds.
\end{theorem}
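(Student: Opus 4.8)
The plan is to deduce the statement from the gradability criterion of Theorem~\ref{theorem.imgr}. Since it is known that a piecewise hereditary algebra has an acyclic quiver, the hypothesis already forbids $\Lambda$ from being piecewise hereditary, so the ``In particular'' clause will follow once the main assertion is proved; and by Theorem~\ref{theorem.imgr} the main assertion amounts to exhibiting a single non-gradable $\widetilde{\Lambda}$-module. Recall that the $\mathbb{Z}$-grading on $\widetilde{\Lambda}$ has $\widetilde{\Lambda}_0 = \Hom_{\mathcal{D}^b(\Lambda)}(\Lambda,\Lambda) = \Lambda$, so the quiver of $\widetilde{\Lambda}$ contains the quiver of $\Lambda$ as its arrows of degree $0$, together with extra arrows placed in nonzero degrees. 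Thus the given oriented cycle $c$, say based at a vertex $v$, lives entirely in degree $0$.

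The first real step is to upgrade this degree-$0$ cycle to a closed walk of nonzero total degree in the quiver of $\widetilde{\Lambda}$. Because $\Lambda = \widetilde{\Lambda}_0$ is finite dimensional, the cyclic path $c$ is nilpotent; let $c^{N} \neq 0 = c^{N+1}$. To close $c$ up into a walk of nonzero degree I would exploit the $2$-Calabi--Yau origin of the grading: the identification $\widetilde{\Lambda}_i = \Hom_{\mathcal{D}^b(\Lambda)}(\Lambda, F^i \Lambda)$ together with Serre duality in $\mathcal{D}^b(\Lambda)$ yields a graded duality pairing each degree against a complementary, generically nonzero, degree. This duality produces a path from $v$ back to $v$ through arrows of positive degree that pairs nontrivially with $c^{N}$, and comparing degrees along the resulting closed walk $w$ through $v$ gives a total degree $d \neq 0$. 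I expect this to be the main obstacle: one must not merely detect a positive-degree arrow, but also control the relations of $\widetilde{\Lambda}$ tightly enough to guarantee that $w$ supports an honest representation with invertible structure maps. The relation $c^{N+1} = 0$ is precisely what has to be routed around, which is why the Gorenstein / $2$-Calabi--Yau structure, rather than the naive cycle in degree $0$, is what does the work.

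With such a walk in hand I would finish by a twisting argument, which I expect to reproduce the $1$-parameter families of Theorem~\ref{theorem.param-ungr}. Assuming first that $k$ is algebraically closed, the grading corresponds to a $k^\times$-action sending a representation $M$ to ${}^t M$, obtained by rescaling the map of each arrow $\alpha$ by $t^{\deg \alpha}$; an indecomposable $M$ is gradable precisely when ${}^t M \cong M$ for every $t \in k^\times$. Along $w$ I build a thin band-type module $M_\lambda$ with parameter $\lambda \in k^\times$ recording the holonomy once around $w$. Since the total degree of $w$ is $d \neq 0$, rescaling multiplies this holonomy by $t^{d}$, so that ${}^t M_\lambda \cong M_{t^{d}\lambda}$. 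As $t$ ranges over $k^\times$ and $d \neq 0$, the modules $M_\lambda$ and ${}^t M_\lambda$ are non-isomorphic for all but finitely many $\lambda$, whence $M_\lambda$ is non-gradable for generic $\lambda$.

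Finally I would descend from $\bar{k}$ to an arbitrary ground field (a non-gradable $\widetilde{\Lambda} \otimes_k \bar{k}$-module yields a non-gradable $\widetilde{\Lambda}$-module), verify that the $M_\lambda$ are genuinely indecomposable, and invoke Theorem~\ref{theorem.imgr} to conclude that the orbit category is strictly contained in the cluster category. As noted above, the hypothesis rules out piecewise heredity, so this is exactly the ``only if'' direction of Conjecture~\ref{conjecture} for algebras whose quiver has an oriented cycle, as claimed.
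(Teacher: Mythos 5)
Your reduction to exhibiting a non-gradable $\widetilde{\Lambda}$-module via Theorem~\ref{theorem.imgr} matches the paper's opening move, and your observation that an oriented cycle rules out piecewise heredity disposes of the ``in particular'' clause correctly. But the two steps carrying all the weight are not done, and the first rests on a mechanism that does not work as stated. The source of positive-degree arrows in $\widetilde{\Lambda}$ is not a ``graded duality pairing'' coming from Serre duality --- $\widetilde{\Lambda}$ is positively graded and in no useful sense graded self-dual, and no pairing of each degree against a complementary degree is available. What actually produces the return path is exactly the nilpotency you mention and then abandon: since some power of the cycle vanishes, a minimal relation of $\Lambda$ must occur as a scalar multiple of a subpath of that power (the paper's Lemma~\ref{lemma.relation_exists}), and by Theorem~\ref{theo.tensoralg} minimal relations are precisely the degree-$1$ generators of $\widetilde{\Lambda}$, pointing backwards along the cycle. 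That is how one gets a closed walk of nonzero total degree; your duality argument would have to be replaced by this.

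Second, and more seriously: the ``thin band-type module $M_\lambda$ with holonomy $\lambda$'' is not a construction you can simply perform over $\widetilde{\Lambda}$. This algebra is not special biserial; its relations (those of $\Lambda$ together with those of the tensor algebra $T_{\Lambda} \Ext^2_{\Lambda}(D\Lambda, \Lambda)$) may annihilate the compositions your walk requires, and even where the module exists, its indecomposability and the pairwise non-isomorphy of the family $M_\lambda$ --- which you need before the dichotomy of Theorem~\ref{theorem.param-ungr} gives non-gradability --- are precisely the hard points, which you flag as ``the main obstacle'' and then never resolve. The paper's route differs structurally exactly here: it never constructs a finite-dimensional non-gradable module at all. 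Instead it builds an infinite-dimensional indecomposable periodic graded module, presented by a zigzag complex of graded projectives alternating degree-$0$ maps $P_i \to Q_i$ with positive-degree maps corresponding to sequences of minimal relations (Lemma~\ref{lemma.zigzag} and Proposition~\ref{prop.zigzag}); indecomposability is proved from explicit non-factorization conditions, which hold because in the tensor algebra no relation has a summand that is a product of degree-$1$ arrows (Remark~\ref{rem.seq_rel}(2)). Moreover the proof of Theorem~\ref{thm.cycles} needs a further iteration --- choosing maximal sequences of minimal relations so that every stretch of the cycle not covered by relations carries a \emph{nonzero} map $P \to Q$ --- without which the hypotheses of Proposition~\ref{prop.zigzag} can fail. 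The existence of a finite-dimensional non-gradable module is then deduced abstractly from Dowbor--Skowro\'nski (Theorem~\ref{theo.DowborSkowronski}). Your plan omits all of this machinery; as a minor point, the descent from $\bar{k}$ is unnecessary, since the paper fixes $k$ algebraically closed from the outset.
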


\section{Background and Notation}

We assume all our algebras to be finite-dimensional associative algebras over an algebraically closed base field $k$. Moreover, all categories are $k$-categories.

For an algebra $\Lambda$ we denote by $\mathscr{D}_{\Lambda}$ the bounded derived category of the category of finitely generated left $\Lambda$-modules $\mod \Lambda$.

\begin{definition}
Let $\mathscr{T}$ be a triangulated category. A \emph{Serre functor} of $\mathscr{T}$ is an autoequivalence $\mathbb{S}$, such that there is a functorial isomorphism
\[ \Hom_{\mathscr{T}}(X, Y) \iso D \Hom_{\mathscr{T}}(Y, \mathbb{S} X). \]

We denote by $\mathbb{S}_2 = \mathbb{S}[-2]$ the second desuspension of $\mathbb{S}$.
\end{definition}

Note that if $\Lambda$ is an algebra of finite global dimension, then the $\mathscr{D}_{\Lambda}$ has a Serre functor, which is given by $\mathbb{S} = D \Lambda \otimes_{\Lambda}^L -$.

\begin{definition}
A triangulated category $\mathscr{T}$ is called \emph{$d$-Calabi-Yau} if the $d$-th suspension $[d]$ is a Serre functor on $\mathscr{T}$.
\end{definition}

\begin{definition}
Let $\mathscr{T}$ be a triangulated category. An object $T \in \mathscr{T}$ is called ($2$-)cluster tilting if
\begin{align*}
\add T & = \{ X \in \mathscr{T} \mid \Hom_{\mathscr{T}}(T, X[1]) = 0 \} \\
& = \{ X \in \mathscr{T} \mid \Hom_{\mathscr{T}}(X, T[1]) = 0 \}.
\end{align*}
Note that in case $\mathscr{T}$ is $2$-Calabi-Yau the two subcategories on the right automatically coincide.
\end{definition}

\subsection{Orbit categories and generalized cluster categories} \label{subsect.intro_Amiotcluster}

\begin{definition} \label{def.clustercat_orbitcat}
For an algebra $\Lambda$ of global dimension $\gld \Lambda \leq 2$ we denote by $\mathscr{C}_{\Lambda}$ the \emph{cluster category} of $\Lambda$ as introduced in \cite{CC}.

We denote by $\pi \colon \mathscr{D}_{\Lambda} \to \mathscr{C}_{\Lambda}$ the natural functor from the derived category to the cluster category.

We denote by $\mathscr{D}_{\Lambda} / (\mathbb{S}_2)$ the \emph{orbit category} of $\mathscr{D}_{\Lambda}$ modulo $\mathbb{S}_2$, that is the category with the same objects as $\mathscr{D}_{\Lambda}$, but with morphisms sets
\[ \Hom_{\mathscr{D}_{\Lambda} / \mathbb{S}_2}(X,Y) = \bigoplus_{i \in \mathbb{Z}} \Hom_{\mathscr{D}_{\Lambda}}(\mathbb{S}_2^i X, Y). \]
Then $\pi$ factors into
\[ \mathscr{D}_{\Lambda} \to[40] \mathscr{D}_{\Lambda} / (\mathbb{S}_2) \monol[40]{\substack{\text{fully} \\ \text{faithful}}} \mathscr{C}_{\Lambda}. \]
\end{definition}

\begin{theorem}[Keller \cite{K_orbit}]
Assume $\Lambda$ is hereditary. Then the functor $\mathscr{D}_{\Lambda} / (\mathbb{S}_2) \to \mathscr{C}_{\Lambda}$ of Definition~\ref{def.clustercat_orbitcat} is an equivalence.
\end{theorem}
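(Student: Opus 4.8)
The plan is to recast the statement in Keller's dg-categorical framework and then reduce it to a density question that heredity resolves. Since $\Lambda$ is hereditary it has finite global dimension, so $\mathscr{D}_{\Lambda} = \mathscr{D}^b(\mod \Lambda) = \perf \Lambda$ carries the standard dg enhancement $\mathscr{A}$, and the Serre functor $\mathbb{S} = D\Lambda \otimes_{\Lambda}^L -$, hence also $\mathbb{S}_2 = \mathbb{S}[-2]$, lifts to a dg autoequivalence of $\mathscr{A}$. Following Keller I would form the dg orbit category $\mathscr{A} / \mathbb{S}_2$, whose morphism complexes are $\bigoplus_{i \in \mathbb{Z}} \Hom_{\mathscr{A}}(\mathbb{S}_2^{i} X, Y)$ (suitably interpreted as a colimit), so that $H^0(\mathscr{A}/\mathbb{S}_2) = \mathscr{D}_{\Lambda}/(\mathbb{S}_2)$ is the orbit category and $\mathscr{C}_{\Lambda}$ is its triangulated hull, i.e.\ the thick subcategory generated by the image of $\pi$ inside the derived category of $\mathscr{A}/\mathbb{S}_2$.

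By Definition~\ref{def.clustercat_orbitcat} the functor $\mathscr{D}_{\Lambda}/(\mathbb{S}_2) \to \mathscr{C}_{\Lambda}$ is fully faithful, so it is an equivalence precisely when it is essentially surjective. Because $\mathscr{C}_{\Lambda}$ is generated as a thick subcategory by the image of $\pi$, and because $\mathscr{D}_{\Lambda}/(\mathbb{S}_2)$ is Krull--Schmidt (hence idempotent complete) and closed under $[1]$, essential surjectivity holds as soon as the orbit category is closed under cones, that is, as soon as it is already a triangulated subcategory of $\mathscr{C}_{\Lambda}$. Thus the whole theorem reduces to verifying Keller's criterion that the orbit category is triangulated, and for this it suffices to show that the morphism complexes of $\mathscr{A}/\mathbb{S}_2$ have cohomology concentrated in degree $0$ and that, for each pair of indecomposables, only finitely many powers $\mathbb{S}_2^{i}$ contribute a nonzero term.

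To verify this I would exploit the rigid structure of $\mathscr{D}^b(\mod \Lambda)$ for hereditary $\Lambda$: every object is a direct sum of shifts $M[a]$ of modules, and for indecomposable modules $M,N$ one has $\Hom_{\mathscr{D}_{\Lambda}}(M[a], N[b]) = \Hom_{\Lambda}(M,N)$ if $b=a$, equals $\Ext^1_{\Lambda}(M,N)$ if $b = a+1$, and is $0$ otherwise. Here $\mathbb{S}_2 = \mathbb{S}[-2] = \tau[-1]$ acts by the Auslander--Reiten translate together with a shift, so the Auslander--Reiten formula $D\Ext^1_{\Lambda}(X,Y) \iso \Hom_{\Lambda}(Y, \tau X)$ together with the finiteness of $\mod \Lambda$ forces $\bigoplus_{i} \Hom_{\mathscr{A}}(\mathbb{S}_2^{i} M[a], N[b])$ to have only finitely many nonzero summands. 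Moreover the two-term concentration of $\Hom_{\mathscr{D}_{\Lambda}}$ in cohomological degrees $\{0,1\}$, combined with how the shift $[-1]$ inside $\mathbb{S}_2$ realigns these two strands across different powers of $i$, is what should collapse the total complex onto cohomological degree $0$. Establishing this last point, the degree-$0$ concentration (equivalently, the formality) of the dg orbit morphism complexes, is the hard part, and it is exactly where heredity is indispensable.

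Once the concentration is in hand, Keller's criterion gives that the orbit category is triangulated; being a full triangulated subcategory of $\mathscr{C}_{\Lambda}$ that contains the generator $\pi(\Lambda)$ and is closed under summands, it must be all of $\mathscr{C}_{\Lambda}$, and the fully faithful $\pi$ is then an equivalence. The reason I expect the concentration step to be the genuine obstacle, rather than a routine computation, is that it fails the moment $\gld \Lambda \geq 2$: then $\Hom_{\mathscr{D}_{\Lambda}}(M[a], N[b])$ spreads over $b - a \in \{0,1,2,\dots\}$, the total complex acquires higher cohomology, cones need not lift back into the orbit category, and the orbit category is strictly smaller than its hull, which is precisely the phenomenon the rest of this paper sets out to measure.
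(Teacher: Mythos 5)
The paper gives no internal proof of this statement; it is quoted directly from Keller \cite{K_orbit}, so the benchmark is Keller's own argument. Your overall architecture (dg orbit category, triangulated hull, reduction to density/closure under cones of the image) matches Keller's framework, but your proof has a genuine gap sitting exactly where you planted your flag: the ``degree-$0$ concentration'' of the dg orbit morphism complexes is not only left unproven, it is \emph{false}, and it is not Keller's criterion. The cohomology of the dg orbit Hom complex from a module $M$ to a module $N$ in degree $p$ is $\bigoplus_{i} \Hom_{\mathscr{D}_{\Lambda}}(\mathbb{S}_2^{i} M, N[p])$; already the summand $i=0$, $p=1$ is $\Ext^1_{\Lambda}(M,N)$, which is generally nonzero. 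It must be nonzero: $\mathscr{C}_{\Lambda}$ is $2$-Calabi--Yau, and the spaces $\Hom_{\mathscr{C}_{\Lambda}}(X, Y[1])$ are exactly what cluster theory is built on. If the morphism complexes were formal and concentrated in degree $0$, the resulting category could not be the cluster category at all. So the step you call ``the hard part'' is not a hard computation awaiting completion --- it is a dead end, and no amount of realigning the two strands under powers of $\mathbb{S}_2 = \tau[-1]$ will collapse the $\Ext^1$ contribution in cohomological degree $1$.

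What Keller actually does is different: he verifies his two hypotheses --- (1) for indecomposables $X, Y$ only finitely many $i$ give $\Hom_{\mathscr{D}_{\Lambda}}(\mathbb{S}_2^{i} X, Y) \neq 0$, which for hereditary $\Lambda$ follows from $\mathbb{S}_2 = \tau[-1]$ and the concentration of $\Hom_{\mathscr{D}_{\Lambda}}$ in degrees $\{0,1\}$, \emph{not} from ``finiteness of $\mod \Lambda$'' (hereditary algebras are usually representation-infinite, so that phrase in your sketch needs repair as well); and (2) the $\mathbb{S}_2$-orbit of every indecomposable of $\mathscr{D}_{\Lambda}$ meets a fundamental domain of the form $\{U[n] \mid U \in \mod \Lambda,\ 0 \leq n \leq N\}$ --- and then proves density of the orbit category in the hull by a d\'evissage: every object of the triangulated hull is an iterated extension of objects in the image, and the image is closed under extensions because the relevant triangles can be lifted back to $\mathscr{D}_{\Lambda}$ using the fundamental domain. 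Heredity enters through the decomposition of objects of $\mathscr{D}_{\Lambda}$ into shifted modules and through this lifting argument, not through any formality of the dg orbit category. Your final bookkeeping (fully faithful plus dense plus idempotent-complete implies equivalence) is fine, but the engine you propose to drive it cannot run.
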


\begin{theorem}[{\cite[Proposition~4.7]{CC}}] \label{theo.tensoralg}
Let $\Lambda$ be an algebra with $\gld \Lambda \leq 2$. Then
\[ \widetilde{\Lambda} = \End_{\mathscr{C}_{\Lambda}}(\pi \Lambda) = \oplus_{i \geq 0} \Hom_{\mathscr{D}}(\Lambda, \mathbb{S}_2^{-i} \Lambda) = T_{\Lambda} \Ext^2_{\Lambda}( D \Lambda, \Lambda), \]
where $T_{\Lambda}$ denotes the tensor algebra over $\Lambda$.
\end{theorem}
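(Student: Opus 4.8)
The plan is to compute $\End_{\mathscr{C}_{\Lambda}}(\pi\Lambda)$ inside the orbit category and then recognise the answer as a tensor algebra. Because the functor $\mathscr{D}_{\Lambda}/(\mathbb{S}_2)\to\mathscr{C}_{\Lambda}$ of Definition~\ref{def.clustercat_orbitcat} is fully faithful, I have
\[ \End_{\mathscr{C}_{\Lambda}}(\pi\Lambda)=\End_{\mathscr{D}_{\Lambda}/(\mathbb{S}_2)}(\Lambda)=\bigoplus_{i\in\mathbb{Z}}\Hom_{\mathscr{D}_{\Lambda}}(\mathbb{S}_2^i\Lambda,\Lambda), \]
and applying the autoequivalence $\mathbb{S}_2^{-i}$ to each summand rewrites this as $\bigoplus_{i\in\mathbb{Z}}\Hom_{\mathscr{D}_{\Lambda}}(\Lambda,\mathbb{S}_2^{-i}\Lambda)$. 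Two facts make the right-hand side computable. First, $\Lambda$ is projective, so $\Hom_{\mathscr{D}_{\Lambda}}(\Lambda,M)=H^0(M)$ for every $M$, and this identification respects the $\Lambda$-bimodule structure coming from the bimodule structure of $M$. Second, $\mathbb{S}_2$ and its inverse are given by tensoring with bimodule complexes: from $\mathbb{S}=D\Lambda\otimes_{\Lambda}^L-$ one gets $\mathbb{S}_2\Lambda=D\Lambda[-2]$, while $\mathbb{S}_2^{-1}=C\otimes_{\Lambda}^L-$ with $C:=\mathbb{S}_2^{-1}\Lambda=\RHom_{\Lambda}(D\Lambda,\Lambda)[2]$. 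Here the hypothesis $\gld\Lambda\leq 2$ enters decisively: it forces $\RHom_{\Lambda}(D\Lambda,\Lambda)$ to be concentrated in degrees $0,1,2$, so that $C$ is connective (cohomology in non-positive degrees) with top cohomology $H^0(C)=\Ext^2_{\Lambda}(D\Lambda,\Lambda)=:\Theta$.

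Next I would determine the graded pieces. For $i<0$ the object $\mathbb{S}_2^{-i}\Lambda$ is a positive power of $\mathbb{S}_2$ applied to $\Lambda$; since $D\Lambda$ has projective dimension at most $2$ on either side, an amplitude count gives that $\mathbb{S}_2^{j}\Lambda$ is concentrated in cohomological degrees $2,\dots,2j$ for $j\geq 1$, whence $H^0(\mathbb{S}_2^{-i}\Lambda)=0$. Thus only the summands with $i\geq 0$ survive, matching the index range asserted in the statement. For $i\geq0$ associativity of the derived tensor product gives $\mathbb{S}_2^{-i}\Lambda\cong C^{\otimes_{\Lambda}^L i}$, again connective, and the crucial point is that the top cohomology of a derived tensor product of connective complexes is the ordinary tensor product of their top cohomologies. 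Applying this (via the hyperTor/K\"unneth spectral sequence) yields $H^0(C^{\otimes_{\Lambda}^L i})\cong\Theta^{\otimes_{\Lambda} i}$, so that as a graded $\Lambda$-bimodule $\widetilde{\Lambda}\cong\bigoplus_{i\geq 0}\Theta^{\otimes_{\Lambda} i}$, which is the module underlying $T_{\Lambda}\Theta=T_{\Lambda}\Ext^2_{\Lambda}(D\Lambda,\Lambda)$.

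Finally I must check that the multiplication of the orbit category is transported to the tensor-algebra multiplication. The product of homogeneous elements $f\in\Hom_{\mathscr{D}_{\Lambda}}(\Lambda,\mathbb{S}_2^{-i}\Lambda)$ and $g\in\Hom_{\mathscr{D}_{\Lambda}}(\Lambda,\mathbb{S}_2^{-j}\Lambda)$ is obtained by applying the power $\mathbb{S}_2^{-i}=C^{\otimes_{\Lambda}^L i}\otimes_{\Lambda}^L-$ to $g$ and composing with $f$ in $\mathscr{D}_{\Lambda}$; on $H^0$ this composite is exactly the image of $f\otimes g$ under $\Theta^{\otimes_{\Lambda} i}\otimes_{\Lambda}\Theta^{\otimes_{\Lambda} j}=\Theta^{\otimes_{\Lambda}(i+j)}$. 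I expect the genuine work to be concentrated here: one has to verify that the isomorphisms $H^0(C^{\otimes_{\Lambda}^L i})\cong\Theta^{\otimes_{\Lambda} i}$ are natural in each tensor factor and compatible with associativity, so that they assemble into an isomorphism of graded algebras and not merely of graded bimodules, while keeping careful track of the left and right $\Lambda$-actions (in particular which side $\mathbb{S}_2^{-1}$ tensors on) at every stage. The connectivity of $C$ — the only place the bound $\gld\Lambda\leq2$ is really used — is precisely what makes the K\"unneth map an isomorphism in top degree and hence what makes the whole identification go through.
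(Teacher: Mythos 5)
Your argument is correct, and the paper itself contains no proof of this statement --- it is imported verbatim from \cite[Proposition~4.7]{CC}, where the proof runs along essentially the same lines as yours: compute $\End_{\mathscr{C}_{\Lambda}}(\pi\Lambda)$ via the orbit-category Hom spaces, kill the negative-degree summands by an amplitude count, and identify $H^0(\mathbb{S}_2^{-i}\Lambda)$ with $\Ext^2_{\Lambda}(D\Lambda,\Lambda)^{\otimes_{\Lambda} i}$ using that the inverse dualizing complex $\RHom_{\Lambda}(D\Lambda,\Lambda)[2]$ is concentrated in non-positive degrees (this is where $\gld\Lambda\leq 2$ enters) together with right-exactness of the tensor product. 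You correctly flag the remaining bookkeeping (naturality and associativity of the K\"unneth identifications, left/right bimodule conventions) as the only part still to be written out, and that part is indeed routine.
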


In particular $\widetilde{\Lambda}$ has a natural positive $\mathbb{Z}$-grading.

Note that it follows from the right hand side above that $\widetilde{\Lambda}$ is generated in degrees $0$ and $1$, and that minimal generators in degree $1$ correspond to a minimal set of relations in $\Lambda$.

\begin{definition} \label{def.tau-2-fin}
An algebra $\Lambda$ is called \emph{$\tau_2$-finite} if $\gld \Lambda \leq 2$, and $\widetilde{\Lambda}$ is non-zero in only finitely many degrees.
\end{definition}

\begin{theorem}[{\cite[Theorem~4.10]{CC}}] \label{theorem.is2cy}
Let $\Lambda$ be $\tau_2$-finite. Then $\mathscr{C}_{\Lambda}$ is a 2-CY category, and $\pi \Lambda$ is a cluster tilting object.
\end{theorem}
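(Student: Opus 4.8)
The plan is to realize $\mathscr{C}_\Lambda$ concretely as a Verdier quotient of derived categories of a suitable dg algebra, and to deduce both assertions from a general statement about such quotients. First I would set up the dg algebra $\Pi = T_\Lambda(\Theta[2])$, the derived $3$-Calabi--Yau completion of $\Lambda$, where $\Theta = \RHom_{\Lambda^{\mathrm e}}(\Lambda, \Lambda^{\mathrm e})$ is the inverse dualizing complex and $\Lambda^{\mathrm e}$ the enveloping algebra. Since $\gld\Lambda\le 2$, $\Lambda$ is homologically smooth, and by Keller's results on Calabi--Yau completions $\Pi$ is again homologically smooth and bimodule $3$-Calabi--Yau, i.e.\ $\RHom_{\Pi^{\mathrm e}}(\Pi,\Pi^{\mathrm e})\iso\Pi[-3]$ in $\mathscr{D}(\Pi^{\mathrm e})$. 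The global dimension bound also forces $\Theta[2]$ to be concentrated in non-positive cohomological degrees, so that $H^p\Pi = 0$ for $p>0$ and $H^0\Pi = T_\Lambda\Ext^2_\Lambda(D\Lambda,\Lambda) = \widetilde\Lambda$ by Theorem~\ref{theo.tensoralg}. At this point I would invoke $\tau_2$-finiteness: $\widetilde\Lambda$ is nonzero in only finitely many degrees, each finite-dimensional over $k$, hence $H^0\Pi$ is a finite-dimensional algebra. This is exactly the input needed to run the quotient construction, and I would take $\mathscr{C}_\Lambda = \per\Pi/\mathscr{D}^b_{fd}(\Pi)$, with $\pi\Lambda$ the image of the free module $\Pi$, so that $\End_{\mathscr{C}_\Lambda}(\pi\Lambda) = H^0\Pi = \widetilde\Lambda$, consistent with the stated description.

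The next step is to check the finiteness that makes the quotient well-behaved, namely $\mathscr{D}^b_{fd}(\Pi)\subseteq\per\Pi$. Here I would use that $\Pi$ is homologically smooth: resolving the bimodule $\Pi$ by a finite complex $P$ of free $\Pi^{\mathrm e}$-modules, any $M\in\mathscr{D}^b_{fd}(\Pi)$ satisfies $M\iso P\otimes^L_\Pi M$, which is a finite extension of shifts of $\Pi\otimes_k M$; since $M$ has finite-dimensional total homology the latter is a finite sum of shifts of $\Pi$, so $M$ is perfect. This inclusion guarantees that the objects of $\per\Pi$ remain compact in the quotient.

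The technical heart is Hom-finiteness and the Calabi--Yau property, which I would treat together. The bimodule $3$-Calabi--Yau property yields a relative Serre duality: for $X\in\per\Pi$ and $Y\in\mathscr{D}^b_{fd}(\Pi)$ there is a bifunctorial isomorphism $D\Hom_{\mathscr{D}(\Pi)}(X,Y)\iso\Hom_{\mathscr{D}(\Pi)}(Y,X[3])$. To exploit it I would equip $\per\Pi$ with the t-structure induced by $H^0\Pi$, pick out a fundamental domain $\mathscr{F}$ on which the projection to $\mathscr{C}_\Lambda$ is bijective on objects, and compute morphisms in the quotient as the appropriate colimit of morphisms in $\per\Pi$. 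The outcome should be an identification expressing $\Hom_{\mathscr{C}_\Lambda}(X,Y)$ in terms of a $\per$-morphism space together with a dual contribution governed by the duality above; finite-dimensionality of $H^0\Pi$ then makes each summand finite-dimensional, and the degree-$3$ duality descends --- after accounting for the shift introduced by the connecting morphisms of the localization triangle --- to the isomorphism $\Hom_{\mathscr{C}_\Lambda}(X,Y)\iso D\Hom_{\mathscr{C}_\Lambda}(Y,X[2])$, i.e.\ $[2]$ is a Serre functor and $\mathscr{C}_\Lambda$ is $2$-Calabi--Yau. I expect the index bookkeeping in this descent, and the verification that $\mathscr{F}$ genuinely controls all quotient morphisms, to be the main obstacle.

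Finally, for cluster tilting I would verify the two halves of the defining equality for $T=\pi\Lambda=\Pi$. Rigidity $\Hom_{\mathscr{C}_\Lambda}(\Pi,\Pi[1])=0$ follows because, via the t-structure, such a morphism would be detected by positive cohomology of $\Pi$, which vanishes. For the reverse inclusion $\{X:\Hom_{\mathscr{C}_\Lambda}(\Pi,X[1])=0\}\subseteq\add\Pi$, I would represent $X$ inside the fundamental domain, use that $\Hom_{\mathscr{C}_\Lambda}(\Pi,-)$ recovers $H^0$ there, and argue that the vanishing forces the homology of a representative of $X$ to concentrate in the single degree corresponding to $\add\Pi$, whence $X\iso\Pi^n$ in $\mathscr{C}_\Lambda$. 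Since $\mathscr{C}_\Lambda$ is $2$-Calabi--Yau, the two subcategories in the definition of cluster tilting coincide automatically, so this single computation suffices.
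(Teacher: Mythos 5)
The paper offers no proof of this statement at all --- it is quoted verbatim from [CC, Theorem~4.10] (Amiot) --- and your proposal is essentially a correct reconstruction of the proof in that cited source: one realizes $\mathscr{C}_{\Lambda}$ as the quotient $\perf \Pi / \mathscr{D}^b_{fd}(\Pi)$ for the $3$-Calabi--Yau completion $\Pi = T_{\Lambda}(\Theta[2])$, uses $\tau_2$-finiteness exactly to guarantee that $H^0\Pi \iso \widetilde{\Lambda}$ is finite-dimensional, derives Hom-finiteness and the $2$-CY property from the bimodule $3$-CY duality $D\Hom(X,Y) \iso \Hom(Y,X[3])$ together with a fundamental domain for the projection, and verifies cluster tilting of the image of $\Pi$ via the induced t-structure. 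Since your route matches the cited proof in structure and in all key lemmas, there is nothing to add beyond noting that the ``index bookkeeping'' you flag as the main obstacle is indeed where the bulk of the work lies in Amiot's Sections~2.1--2.3.
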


\section{Connection to gradable modules}

Throughout this section we assume $\Lambda$ to be $\tau_2$-finite. Then by Theorem~\ref{theorem.is2cy} the cluster category is 2-CY. Hence, by the arguments of \cite[Theorem~2.2]{BMR_cta}, for any cluster tilting object $T$ the functor
\begin{align*}
\mathtt{M}_T \colon \mathscr{C}_{\Lambda} & \to[30] \mod \End_{\mathscr{C}}(T) \\
X & \mapsto[30] \Hom_{\mathscr{C}} (T, X)
\end{align*}
induces an equivalence $\mathscr{C}_{\Lambda} / (T[1]) \tol[30]{\approx} \mod \End_{\mathscr{C}}(T)$. In particular we have
\[ \mathtt{M} := \mathtt{M}_{\pi \Lambda} \colon \mathscr{C}_{\Lambda} / (\pi \Lambda [1]) \tol[30]{\approx} \mod \widetilde{\Lambda}. \]

The composition $\mathtt{M} \pi$ is the functor
\begin{align*}
\mathtt{M} \pi \colon \mathscr{D}_{\Lambda} & \to[30] \mod \widetilde{\Lambda} \\
X & \mapsto[30] \bigoplus_{i \in \mathbb{Z}} \Hom_{\mathscr{D}_{\Lambda}}(\Lambda, \mathbb{S}_2^{-i} X).
\end{align*}
We now denote the category of graded $\widetilde{\Lambda}$-modules by $\mod\gr \widetilde{\Lambda}$. By the formula above it is clear that $\mathtt{M} \pi X$ carries the structure of a graded $\widetilde{\Lambda}$-module for any $X \in \mathscr{D}_{\Lambda}$, and hence that $\mathtt{M} \pi$ factors through the forgetful functor $\mod\gr \widetilde{\Lambda} \to[30] \mod \widetilde{\Lambda}$. We denote the (full, isomorphism-closed) subcategory of $\mod \widetilde{\Lambda}$ consisting of modules in the image of this forgetful functor by $\mod\bl \widetilde{\Lambda}$.

\begin{theorem} \label{theorem.imgr}
Let $\Lambda$ be a $\tau_2$-finite algebra. Then for $X \in \mathscr{C}_{\Lambda}$ the following are equivalent:
\begin{enumerate}
\item $X$ is in the image of $\pi$, that is there is $Y \in \mathscr{D}_{\Lambda}$ with $X \iso \pi Y$, and
\item $\mathtt{M} X \in \mod\bl \widetilde{\Lambda}$.
\end{enumerate}
\end{theorem}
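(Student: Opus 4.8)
The plan is to lift the whole picture to a \emph{graded} version and match it against the equivalence $\mathtt{M}$ on the nose. Write $\widetilde{\mathtt{M}} \colon \mathscr{D}_\Lambda \rightarrow \mod\gr \widetilde{\Lambda}$ for the functor sending $Y$ to $\bigoplus_i \Hom_{\mathscr{D}_\Lambda}(\Lambda, \mathbb{S}_2^{-i} Y)$ with its grading by $i$, and let $U \colon \mod\gr\widetilde{\Lambda} \rightarrow \mod\widetilde{\Lambda}$ be the forgetful functor, so that $\mathtt{M}\pi = U \widetilde{\mathtt{M}}$ by the explicit formula for $\mathtt{M}\pi$. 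With this notation the implication $(1)\Rightarrow(2)$ is immediate: if $X \cong \pi Y$ then $\mathtt{M} X \cong U \widetilde{\mathtt{M}} Y$ lies by construction in the image of $U$, that is in $\mod\bl\widetilde{\Lambda}$.

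For the converse the main step is to show that $\widetilde{\mathtt{M}}$ is \emph{dense}. First I would record the graded analogue of the two properties of the cluster tilting object $\pi\Lambda$ that make $\mathtt{M}$ well behaved. On the one hand $\widetilde{\mathtt{M}}$ restricts to a fully faithful functor on $\add\{\mathbb{S}_2^{-d}\Lambda \mid d \in \mathbb{Z}\}$, carrying $\mathbb{S}_2^{-d}\Lambda$ to the graded projective $\widetilde{\Lambda}(d)$; this is just a reindexing of $\Hom_{\mathscr{D}_\Lambda}(\mathbb{S}_2^{-d}\Lambda, \mathbb{S}_2^{-e}\Lambda) = \Hom_{\mathscr{D}_\Lambda}(\Lambda, \mathbb{S}_2^{d-e}\Lambda)$ against the description of $\widetilde{\Lambda}$ in Theorem~\ref{theo.tensoralg}. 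On the other hand $\widetilde{\mathtt{M}}(\mathbb{S}_2^{-d}\Lambda[1]) = 0$ for every $d$, since each of its graded pieces $\Hom_{\mathscr{D}_\Lambda}(\Lambda, \mathbb{S}_2^{-m}\Lambda[1])$ is a direct summand of $\Hom_{\mathscr{C}_\Lambda}(\pi\Lambda, \pi\Lambda[1])$, which vanishes because the cluster tilting object $\pi\Lambda$ is rigid. Now given $N \in \mod\gr\widetilde{\Lambda}$ I choose a graded projective presentation, lift it through the fully faithful restriction to a map $\tilde{g} \colon Q_1 \rightarrow Q_0$ between objects of $\add\{\mathbb{S}_2^{-d}\Lambda\}$, and set $Z = \Cone \tilde{g}$. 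Applying the cohomological functors $\Hom_{\mathscr{D}_\Lambda}(\Lambda, \mathbb{S}_2^{-i}(-))$ and summing over $i$, the vanishing $\widetilde{\mathtt{M}}(Q_1[1]) = 0$ collapses the long exact sequence to
\[ \widetilde{\mathtt{M}} Z \;\cong\; \cok(\widetilde{\mathtt{M}}\tilde{g}) \;\cong\; N \]
as graded modules.

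To finish $(2)\Rightarrow(1)$ I must upgrade $\widetilde{\mathtt{M}} Z \cong N$ to an isomorphism $\pi Z \cong X$ in $\mathscr{C}_\Lambda$ itself, not merely in $\mathscr{C}_\Lambda/(\pi\Lambda[1])$. First reduce to the case that $X$ has no direct summand in $\add\pi\Lambda[1]$: such summands are direct sums of the indecomposables $\pi(P_j[1])$, hence already lie in the image of $\pi$, and they are invisible to $\mathtt{M}$, so splitting them off changes neither hypothesis nor conclusion. Choosing $N$ with $U N \cong \mathtt{M} X$, I take the presentation above to be a \emph{minimal} graded projective presentation $g$. Because $\widetilde{\Lambda}$ is positively graded with $\widetilde{\Lambda}_0 = \Lambda$, its graded and ungraded radicals coincide, so $U$ preserves projective covers and $\mathtt{M}(\pi \tilde{g}) = U(g)$ is a \emph{minimal} projective presentation of $\mathtt{M} X$. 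On the other hand, as $X$ has no summand in $\add\pi\Lambda[1]$, its minimal $\add\pi\Lambda$-copresentation triangle $T_1 \xrightarrow{a} T_0 \rightarrow X \rightarrow T_1[1]$ (which exists by the usual approximation argument in the $2$-CY category $\mathscr{C}_\Lambda$) also realises the minimal projective presentation of $\mathtt{M} X$ under $\mathtt{M}$. By uniqueness of minimal projective presentations together with the full faithfulness of $\mathtt{M}$ on $\add\pi\Lambda$, the maps $\pi\tilde{g}$ and $a$ are isomorphic, hence so are their cones, whence $\pi Z \cong X$ and $X$ lies in the image of $\pi$.

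The genuinely delicate point is exactly this matching, that is the control of direct summands in $\add\pi\Lambda[1]$. One cannot simply argue that a summand of an object in the image of $\pi$ is again in that image, since this would amount to gradability being closed under direct summands, which fails in general and is precisely the phenomenon underlying Question~\ref{question.skowronski}. The role of the \emph{minimal} graded presentation is to prevent the cone from acquiring any spurious $\add\pi\Lambda[1]$ summand, so that $\pi Z$ agrees with $X$ exactly rather than only modulo $\add\pi\Lambda[1]$; checking that minimality is preserved by the forgetful functor, via the coincidence of the graded and ungraded radicals of $\widetilde{\Lambda}$, is the technical heart of the argument.
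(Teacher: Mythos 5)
Your proof is correct, and its core is exactly the paper's own argument: both proofs lift a graded projective presentation of a graded lift of $\mathtt{M} X$ through the Yoneda equivalence $\add \{ \mathbb{S}_2^i \Lambda \mid i \in \mathbb{Z} \} \to \proj\gr \widetilde{\Lambda}$, take the cone in $\mathscr{D}_{\Lambda}$, and use the cluster-tilting vanishing $\mathtt{M} \pi P[1] = 0$ for $P \in \add \{\mathbb{S}_2^i \Lambda\}$ to collapse the long exact sequence and recover the chosen graded lift. Where you diverge is the endgame. The paper reduces at the outset to indecomposable $X$ (both conditions pass to indecomposable direct summands) and then concludes from the fact that $\mathtt{M}$ preserves isomorphism classes of objects without summands in $\add \pi \Lambda[1]$; you instead keep $X$ arbitrary, insist on a \emph{minimal} graded presentation, and match it against a minimal $\add \pi\Lambda$-presentation of $X$ in $\mathscr{C}_{\Lambda}$, using that the forgetful functor preserves minimality because the Jacobson radical of $\widetilde{\Lambda}$ is a homogeneous ideal. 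This variant is valid, and it does make explicit a point the paper passes over quickly: with a non-minimal resolution the cone could acquire spurious $\add \pi\Lambda[1]$ summands, and minimality rules this out.

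However, your closing justification for why this route is forced is mistaken. Gradability over a finite-dimensional $\mathbb{Z}$-graded algebra \emph{is} closed under direct summands: the radical of such an algebra is homogeneous, and every idempotent is conjugate to a homogeneous idempotent of degree zero, so a summand of a gradable module is gradable (this is due to Gordon and Green; the failure you have in mind occurs for gradings by groups with torsion, such as $\mathbb{Z}/n$). Likewise, Question~\ref{question.skowronski} concerns \emph{density} of the push-down functor — whether every module is gradable at all — not closure of gradability under summands. So the paper's reduction to indecomposables, which you declare illegitimate, is in fact sound, and your minimality argument should be understood as a self-contained alternative rather than a necessary repair; it does not affect the correctness of your proof, since nothing in your argument actually depends on the erroneous remark.
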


\begin{proof}
By the observations above we know that (1) $\then$ (2).

For the implication (2) $\then$ (1) note that both conditions (1) and (2) are satisfied for some $X$ if and only if they are satisfied for all indecomposable direct summands of $X$. Hence we may assume $X$ to be indecomposable.

If $X \in \add \pi \Lambda [1]$ then clearly $X$ is in the image of $\pi$, so the implication holds. If $X \not\in \add \pi \Lambda [1]$ it suffices to show that $\mathtt{M} X \iso \mathtt{M} \pi Y$ for some $Y \in \mathscr{D}_{\Lambda}$ (since $\mathtt{M}$ preserves isomorphism classes of objects without direct summands in $\add \pi \Lambda [1]$). By Property~(2) we know that there is $\widetilde{ \mathtt{M} X} \in \mod\gr \widetilde{\Lambda}$ which is mapped to $\mathtt{M} X$ by the forgetful functor.

Let
\[ P_1 \tol[30]{p} P_0 \epi[30] \widetilde{ \mathtt{M} X} \]
be the beginning of a graded projective resolution of $\widetilde{ \mathtt{M} X}$. Now note that by the Yoneda Lemma the functor
\[ \add \{ \mathbb{S}_2^i \Lambda \mid i \in \mathbb{Z} \} \to[30] \proj\gr \widetilde{\Lambda} \]
induced by $\mathtt{M} \pi$ is an equivalence. Hence $p$ can be lifted to a map $P_1^* \tol[30]{p^*} P_0^*$ in $\mathscr{D}_{\Lambda}$. Now we set $Y = \Cone p^*$. Applying $\mathtt{M} \pi$ to the triangle
\[ P_1^* \tol[30]{p^*} P_0^* \to[30] Y \to[30] P_1^*[1] \]
we obtain the exact sequence
\[ P_1 \tol[30]{p} P_0 \to[30] \mathtt{M} \pi Y \to[30] \mathtt{M} \pi P_1^*[1]. \]
Since $\pi \Lambda$ is a cluster tilting object in $\mathscr{C}$ we have $\mathtt{M} \pi P_1^* [1] = 0$, and hence
\[ \mathtt{M} \pi Y = \Cok p = \widetilde{ \mathtt{M} X}. \qedhere \]
\end{proof}

\begin{corollary} \label{corollary.isogradresol}
Let $\Lambda$ as above, and $f \colon P \to Q$ in $\mathscr{C}$ with $P, Q \in \add \pi\Lambda$. Then $\Cone_{\mathscr{C}} f \in \Im \pi$ if and only if $f$ is isomorphic to a map in the image of $\pi$.
\end{corollary}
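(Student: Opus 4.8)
The plan is to use the functor $\mathtt{M}$ to translate the statement into a question about morphisms between projective $\widetilde{\Lambda}$-modules, and then to feed it into Theorem~\ref{theorem.imgr}. The \emph{if} direction is immediate: since $\pi\colon \mathscr{D}_{\Lambda} \to \mathscr{C}_{\Lambda}$ is a triangle functor, if $f \iso \pi g$ for some $g$ in $\mathscr{D}_{\Lambda}$, then $\Cone_{\mathscr{C}} f \iso \Cone_{\mathscr{C}} \pi g \iso \pi (\Cone_{\mathscr{D}} g) \in \Im \pi$. So the content is the \emph{only if} direction.

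For that direction, write $C = \Cone_{\mathscr{C}} f$ and apply the cohomological functor $\mathtt{M} = \Hom_{\mathscr{C}}(\pi\Lambda,-)$ to the triangle $P \to Q \to C \to P[1]$ whose first map is $f$. Because $P, Q \in \add \pi\Lambda$ and $\pi\Lambda$ is cluster tilting, we have $\Hom_{\mathscr{C}}(\pi\Lambda, P[1]) = \Hom_{\mathscr{C}}(\pi\Lambda, Q[1]) = 0$, and the long exact sequence collapses to $\mathtt{M} C = \Cok(\mathtt{M} f)$. Now $\mathtt{M} f$ is a morphism between the projective $\widetilde{\Lambda}$-modules $\mathtt{M} P$ and $\mathtt{M} Q$, and the hypothesis $C \in \Im \pi$ gives, via Theorem~\ref{theorem.imgr}, that $\mathtt{M} C \in \mod\bl \widetilde{\Lambda}$. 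Thus everything reduces to the following purely module-theoretic claim: a morphism between projective $\widetilde{\Lambda}$-modules whose cokernel is gradable is isomorphic, in the category of morphisms, to a morphism admitting a graded lift.

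This claim is the heart of the argument and the step I expect to be the main obstacle. I would isolate it as a lemma and prove it as follows. Put $M = \Cok(\mathtt{M} f)$ and fix a graded structure $\widetilde{M} \in \mod\gr \widetilde{\Lambda}$. A minimal \emph{graded} projective presentation of $\widetilde{M}$ forgets to a minimal projective presentation of $M$, since over the finite-dimensional graded algebra $\widetilde{\Lambda}$ graded projective covers forget to projective covers. The morphism $\mathtt{M} f$ is itself a projective presentation of $M$ (by definition of cokernel), so by uniqueness of projective presentations up to trivial summands it is isomorphic, in the arrow category, to the direct sum of this minimal graded presentation with trivial parts, namely an identity $A \to A$ and a zero map $B \to 0$. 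Every projective $\widetilde{\Lambda}$-module is gradable and identity and zero morphisms are graded, so this direct sum is gradable; hence $\mathtt{M} f$ is isomorphic to a morphism with a graded lift.

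It remains to transport this back to $\mathscr{C}_{\Lambda}$. The equivalence $\add\{\mathbb{S}_2^i\Lambda\} \simeq \proj\gr \widetilde{\Lambda}$ induced by $\mathtt{M}\pi$, already used in the proof of Theorem~\ref{theorem.imgr}, lifts the graded morphism to a morphism $g^*$ in $\mathscr{D}_{\Lambda}$; since $\pi \mathbb{S}_2 \iso \pi$, its image $\pi g^*$ is a morphism between objects of $\add \pi\Lambda$ with $\mathtt{M}(\pi g^*) \iso \mathtt{M} f$. Finally $\mathtt{M}$ restricts to an equivalence $\add \pi\Lambda \simeq \proj \widetilde{\Lambda}$ (by $\End_{\mathscr{C}}(\pi\Lambda) = \widetilde{\Lambda}$ and Yoneda), which reflects isomorphisms of morphisms, so $f \iso \pi g^*$. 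This exhibits $f$ as isomorphic to a map in $\Im \pi$, completing the proof.
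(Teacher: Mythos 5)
Your proof is correct and takes essentially the same route as the paper: both reduce to choosing a graded projective presentation of the gradable module $\mathtt{M}(\Cone_{\mathscr{C}} f)$, lifting it to $\mathscr{D}_{\Lambda}$ through the equivalence $\add\{\mathbb{S}_2^i \Lambda \mid i \in \mathbb{Z}\} \simeq \proj\gr \widetilde{\Lambda}$ from the proof of Theorem~\ref{theorem.imgr}, and concluding by uniqueness of presentations with a fixed cokernel. The only difference is presentational: you carry out the uniqueness step in $\mod \widetilde{\Lambda}$, with explicit bookkeeping of the trivial summands $A \xrightarrow{1} A$ and $B \to 0$ (each itself in the image of $\pi$) and of the fact that minimal graded presentations forget to minimal presentations, which is precisely the content the paper compresses into the phrase ``$\add \pi\Lambda$-resolutions are unique up to isomorphism'' together with its citation of the construction of $p^*$.
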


\begin{proof}
The ``if'' part is clear. Assume $\Cone_{\mathscr{C}} f \in \Im \pi$. Then $\Cone_{\mathscr{C}} f = \Cone_{\mathscr{C}} \pi p^*$, with $p^*$ as in the proof of \ref{theorem.imgr}. Since $\add \pi \Lambda$-resolutions are unique up to isomorphism, we have $f \iso \pi p^*$.
\end{proof}

\begin{corollary}
Let $\Lambda$ as above. Let $P$ and $Q$ be indecomposable projective $\Lambda$-modules. Then the following are equivalent:
\begin{enumerate}
\item $\{ \Cone_{\mathscr{C}} f \mid f \colon \pi P \to \pi Q \} \subseteq \Im \pi$, and
\item for any $i < j$ and any $f \colon \mathbb{S}_2^i P \to Q$ and $g \colon \mathbb{S}_2^j P \to Q$ with $f \neq 0$ there are $r \colon \mathbb{S}_2^{j-i} P \to P$ and $s \colon \mathbb{S}_2^{j-i} Q \to Q$ such that $g = \mathbb{S}_2^i(r) f + \mathbb{S}_2^{j-i}(f) s$.
\end{enumerate}
\end{corollary}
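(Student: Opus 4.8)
The plan is to reduce the statement, via Corollary~\ref{corollary.isogradresol}, to a purely computational claim about the homogeneous components of a morphism $\pi P \to \pi Q$, and then to settle that claim by an explicit ``clearing'' procedure together with a recursive absorption of cross terms.

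First I would unwind the two conditions in terms of the orbit-category $\Hom$-spaces. Write $H_i = \Hom_{\mathscr D}(\mathbb S_2^i P, Q)$, $A_d = \Hom_{\mathscr D}(\mathbb S_2^d P, P)$ and $B_d = \Hom_{\mathscr D}(\mathbb S_2^d Q, Q)$, so that $\Hom_{\mathscr C}(\pi P, \pi Q) = \bigoplus_i H_i$, while $\End_{\mathscr C}(\pi P) = \bigoplus_d A_d$ and $\End_{\mathscr C}(\pi Q) = \bigoplus_d B_d$ are positively graded algebras with local degree-$0$ part (since $P, Q$ are indecomposable projective). The spaces $H_i$ form a graded bimodule, and composition realises the two operations appearing in~(2): $r \in A_{j-i}$ acts by $f \mapsto \mathbb S_2^i(r) f \in H_j$ and $s \in B_{j-i}$ acts by $f \mapsto \mathbb S_2^{j-i}(f) s \in H_j$. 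Thus (2) says exactly that $H_j = B_{j-i} f + f A_{j-i}$ for every $i < j$ and every nonzero $f \in H_i$. On the other hand, a morphism $\pi P \to \pi Q$ is isomorphic to one in $\Im \pi$ precisely when, up to the isomorphisms $\pi \mathbb S_2^a X \cong \pi X$ and genuine automorphisms, it is homogeneous, i.e.\ concentrated in a single $H_i$. So by Corollary~\ref{corollary.isogradresol}, condition~(1) is equivalent to: every $f = \sum_i f_i \in \bigoplus_i H_i$ can be brought into a single $H_i$ by pre- and post-composing with automorphisms of $\pi P$ and $\pi Q$; and since those automorphisms have invertible degree-$0$ part, I may take them unipotent, of the form $1 + \sum_{k \geq 1} a_k$ with $a_k \in A_k$ (and similarly on the $Q$-side).

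For (2) $\Rightarrow$ (1) I would homogenise a given $f = \sum_{l \geq m} f_l$, with lowest nonzero component $f_m$, degree by degree: the component in degree $m+k$ is cleared by applying $1 + b$ on the left and $1 + a$ on the right with $a \in A_k$, $b \in B_k$. The only contribution such an adjustment makes in degree $m+k$ is $b f_m + f_m a$, so clearing succeeds as soon as the current degree-$(m+k)$ component lies in $B_k f_m + f_m A_k$; but by~(2) this space is all of $H_{m+k}$, so clearing always works, it leaves $f_m$ and the lower (zero) components untouched, and it terminates since $\tau_2$-finiteness makes $\bigoplus_i H_i$ finite-dimensional. This leaves the homogeneous map $f_m$, so $f$ is isomorphic to a map in $\Im \pi$, and Corollary~\ref{corollary.isogradresol} yields $\Cone_{\mathscr C} f \in \Im \pi$.

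For (1) $\Rightarrow$ (2), fix $i < j$, a nonzero $f \in H_i$ and an arbitrary $g \in H_j$, and apply (1) to $f + g$: there are unipotent automorphisms, with components $a_k \in A_k$ and $b_k \in B_k$, conjugating $f + g$ to a homogeneous map, necessarily concentrated in degree $i$ since that is its lowest nonzero component. Vanishing of the degree-$(i+c)$ component for $0 < c \leq j-i$ then gives a family of relations: for $0 < c < j-i$ one gets $b_c f + f a_c + \sum_{k+k'=c,\,k,k'\geq 1} b_k f a_{k'} = 0$, and in degree $j$ one gets $g + b_{j-i} f + f a_{j-i} + \sum_{k+k'=j-i,\,k,k'\geq 1} b_k f a_{k'} = 0$. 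The main obstacle is that the cross terms $b_k f a_{k'}$ with $k, k' \geq 1$ are a priori not of the shape allowed by~(2). I would dispose of them by a downward induction on the left index $k$: using the degree-$(i+k)$ relation just obtained to rewrite $b_k f = -f a_k - \sum b_{k''} f a_{k'''}$, each cross term $b_k f a_{k'}$ becomes an element of $f A_{j-i}$ plus cross terms of total degree $j-i$ with strictly smaller left index, the base case $k = 1$ giving $b_1 f a_{k'} = -f a_1 a_{k'} \in f A_{j-i}$. Feeding this back into the degree-$j$ relation shows $g \in B_{j-i} f + f A_{j-i}$, which is precisely~(2). I note that this argument uses only the vanishing relations coming from homogenising $f + g$, so no minimality hypothesis on a counterexample is needed.
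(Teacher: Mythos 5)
Your proposal is correct, and its skeleton is the paper's: reduce via Corollary~\ref{corollary.isogradresol} to a statement about homogeneous components of maps $\pi P \to \pi Q$, and decompose automorphisms of $\pi P$, $\pi Q$ into homogeneous parts with invertible degree-$0$ part (normalised to $1$). Where you diverge is in the execution of both halves. For (2)~$\Rightarrow$~(1), the paper argues by contradiction: among all maps isomorphic to $f$ it chooses one whose pair $d(f)$ of first two nonzero degrees is lexicographically maximal (finiteness of the set $D$ being exactly where $\tau_2$-finiteness enters) and applies (2) once to push the second nonzero degree strictly up; your iterative clearing is the constructive unwinding of that same idea, arguably cleaner, with $\tau_2$-finiteness used only for termination. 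For (1)~$\Rightarrow$~(2) the routes genuinely differ: you invert the automorphisms and write $(1+b)(f+g)(1+a) = f$, which creates the cross terms $b_k f a_{k'}$ that you must then kill by downward induction on the left index, using the vanishing relations in degrees $i+1, \dots, j-1$. Your induction does close up, but note that the inductive claim should be stated for an arbitrary homogeneous right factor $a' \in A_{k'}$, not just the specific $a_{k'}$, since the rewriting step produces products $a_{k_2} a_{k'}$ of higher degree; as phrased this is a cosmetic fix, not a gap. The paper sidesteps the cross terms entirely by never inverting: it keeps the commutative square $(\underline{f} + \underline{g})(\sum \underline{s_i}) = (\sum \underline{r_i}) \underline{h}$, identifies $h = f$ by comparing lowest-degree components, and then the degree-$j$ component reads off directly as $g = \mathbb{S}_2^i(r_{j-i})f - \mathbb{S}_2^{j-i}(f)s_{j-i}$, linear in $r_{j-i}$ and $s_{j-i}$ with no products of positive-degree terms. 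So your version pays for its conjugated normal form with an extra absorption induction that the paper's bookkeeping makes unnecessary, and in return gives a fully explicit homogenisation algorithm and avoids any appeal to a maximal counterexample.
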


\begin{proof}
We denote by $\underline{P}$ and $\underline{Q}$ the graded $\widetilde{\Lambda}$-modules $\mathtt{M} \pi P$ and $\mathtt{M} \pi Q$, respectively, and use similar notation for maps.

Assume first (1), and let $f$ and $g$ as in (2). By Corollary~\ref{corollary.isogradresol} we have
\[ \begin{tikzpicture}[yscale=-1]
 \node (A1) at (0,0) {$\pi P$};
 \node (B1) at (3,0) {$\pi Q$};
 \node (A2) at (0,1.5) {$\pi P$};
 \node (B2) at (3,1.5) {$\pi Q$};
 \draw [->] (A1) -- node [above] {$\pi f+ \pi g$} (B1);
 \draw [->] (A2) -- node [above] {$\pi h$} (B2);
 \draw [->] (A1) -- node [left] {$r$} (A2);
 \draw [->] (B1) -- node [right] {$s$} (B2); 
\end{tikzpicture} \]
for some $h: \mathbb{S}_2^k P \to Q$, and $r \in \Aut(\pi P)$, $s \in \Aut(\pi Q)$. Passing to graded $\widetilde{\Lambda}$-modules and decomposing $r$ and $s$ into their homogeneous parts we obtain
\[ \begin{tikzpicture}[yscale=-1]
 \node (A1) at (0,0) {$\underline{P}$};
 \node (B1) at (3,0) {$\underline{Q}$};
 \node (A2) at (0,1.5) {$\underline{P}$};
 \node (B2) at (3,1.5) {$\underline{Q}$};
 \draw [->] (A1) -- node [above] {$\underline{f} + \underline{g}$} (B1);
 \draw [->] (A2) -- node [above] {$\underline{h}$} (B2);
 \draw [->] (A1) -- node [left] {$\sum \underline{r_i}$} (A2);
 \draw [->] (B1) -- node [right] {$\sum \underline{s_i}$} (B2); 
\end{tikzpicture} \]
with $r_i \colon \mathbb{S}_2^i P \to P$ and $s_i \colon \mathbb{S}_2^i Q \to Q$. Since $r$ and $s$ are automorphisms so are $r_0$ and $s_0$, and we may assume $r_0 = 1 = s_0$.

Since there are no morphisms in negative degree, taking the non zero morphisms of the smallest degree of the maps $(\underline{f}+\underline{g})(\sum \underline{s_i})$ and $(\sum \underline{r_i}) \underline{h}$, one gets that $k = i$ and $h = f$. Now looking at the morphism of degree $j$ one gets $g=\mathbb{S}_2^i(r_{j-i})f - \mathbb{S}_2^{j-i}(f) s_{j-i}$, which is the factorization property of part (2) of the corollary.

Now assume (2) holds, and assume there is a map $f \colon \pi P \to \pi Q$ such that $\Cone_{\mathscr{C}} f \not\in \Im \pi$. Then $f$ is not isomorphic to a map in the image of $\pi$, that is not isomorphic to a homogeneous map. Let $f' \iso f$. We may write $f' = \sum_{i \in \mathbb{Z}} f'_i$, where $f'_i$ is homogeneous of degree $i$. By assumption at least two of these homogeneous parts do not vanish, and thus we may define
\begin{align*}
d(f') = & (d(f')_1, d(f')_2), & \text{with } & d(f')_1 = \min \{i \in \mathbb{Z} \mid f'_i \neq 0\} \text{, and} \\ &&& d(f')_2 = \min \{i > d(f')_1 \mid f'_i \neq 0\}.
\end{align*}
Note that since $\Lambda$ is $\tau_2$-finite, nonzero maps $\pi P \to \pi Q$ can only exist in finitely many degrees. In particular the set
\[ D = \{ d(f') \mid f' \iso f \} \]
is finite. Ordering pairs of integers lexicographically (that is by $(a_1, a_2) \leq (b_1, b_2)$ if $a_1 < b_1$ or ($a_1 = b_1$ and $a_2 \leq b_2$)), we may assume that $d(f)$ is maximal in $D$.

By (2) we have $r \colon \mathbb{S}_2^{d(f)_2-d(f)_1} P \to P$ and $s \colon \mathbb{S}_2^{d(f)_2-d(f)_1} Q \to Q$ such that $\pi f_j = (\pi r) (\pi f_i) + (\pi f_i) (\pi s)$. Now $f \iso (1 - \pi r) f (1 - \pi s)$. Looking at this degree wise we have
\begin{align*}
f & \iso (1 - \pi r) ( \pi f_{d(f)_1} + \pi f_{d(f)_2} + \sum_{i > d(f)_2} \pi f_i ) (1 - \pi s) \\
& = \pi f_{d(f)_1} + \pi f_{d(f)_2} - (\pi r) (\pi f_{d(f)_1}) - (\pi f_{d(f)_1}) (\pi s) \\
& \qquad \qquad + \text{ things of degree } > d(f)_2 \\
& = \pi f_{d(f)_1} + \text{ things of degree } > d(f)_2
\end{align*}
contradicting our assumption that $d(f)$ is maximal in $D$. Hence (1) must hold.
\end{proof}

\section{Gradable modules and parameter families}

In this section we show that a module over a graded algebra is either gradable, or belongs to a one-parameter family of modules which are almost all non-isomorphic. In particular it will follow that modules which represent an open orbit in the representation variety are always gradable.

\begin{definition}
Let $R$ be a $\mathbb{Z}$-graded $k$-algebra. For $\alpha \in k^{\times}$ we denote by $\sigma_{\alpha}$ the algebra-automorphism given on homogeneous elements by
\[ r \mapsto \alpha^{\deg r} r. \]
For a (non-graded) $R$-module $M$ we denote by $M_{\alpha}$ the module twisted by the automorphism $\sigma_{\alpha}$. That is, $M_{\alpha} = M$ as $k$-vector spaces, but with the new module multiplication given by
\[ m \cdot_{\alpha} r = \alpha^{\deg r} mr \]
for homogeneous $r \in R$.
\end{definition}

\begin{theorem} \label{theorem.param-ungr}
Let $R$ be a finitely generated $\mathbb{Z}$-graded $k$-algebra, and $M$ a finite-dimensional $R$-module. Then exactly one of the following happens:
\begin{enumerate}
\item The modules $M_{\alpha}$ with $\alpha \in k^{\times}$ are all isomorphic, and $M$ is gradable.
\item For any $\alpha \in k^{\times}$ there are only finitely many $\beta \in k^{\times}$ such that $M_{\alpha} \iso M_{\beta}$, and $M$ is not gradable.
\end{enumerate}
\end{theorem}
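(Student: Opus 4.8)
The plan is to reinterpret the twisting operation $M \mapsto M_\alpha$ as the action of the one-dimensional torus $\mathbb{G}_m = k^\times$ on the representation variety of $M$, and to extract both the dichotomy and the gradability statement from the geometry of that action.

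First I would set up the variety. Fixing homogeneous generators of $R$ and writing $d = \dim_k M$, let $\mathcal{V}$ be the (reduced) affine variety of $R$-module structures on $k^d$, a closed subvariety of a product of copies of $M_d(k)$ indexed by the generators. The group $\GL_d$ acts by conjugation, with orbits equal to isomorphism classes, while $\mathbb{G}_m$ acts by the twist $\sigma_\alpha$, scaling the operator by which a degree-$n$ generator acts by the factor $\alpha^n$; these two actions commute. Writing $p \in \mathcal{V}$ for the structure of $M$, the structure of $M_\alpha$ is the point $\psi(\alpha) := \alpha \cdot p$, and $\alpha \mapsto \psi(\alpha)$ is a morphism $\mathbb{G}_m \to \mathcal{V}$ (it is Laurent-polynomial in $\alpha$). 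The key translation is the gradability criterion: a $\mathbb{Z}$-grading of $k^d$ is exactly a cocharacter $\lambda \colon \mathbb{G}_m \to \GL_d$ (its weight-space decomposition), and such a grading turns $p$ into a graded module precisely when $\lambda(\alpha)$ acts by conjugation as $\sigma_\alpha$ does, i.e. $\lambda(\alpha)\cdot p = \alpha \cdot p$ for all $\alpha$. Thus $M$ is gradable iff such a cocharacter exists. One direction is immediate: if $M = \bigoplus_i M_i$ is a grading, the operator acting by $\alpha^i$ on $M_i$ is an isomorphism $M \iso M_\alpha$, so gradability forces all twists to be isomorphic.

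Next I would analyse the set $S := \{\alpha \in k^\times \mid M_\alpha \iso M\}$. Since $\sigma_\alpha \sigma_\beta = \sigma_{\alpha\beta}$ we get $(M_\alpha)_\beta \iso M_{\alpha\beta}$, and twisting preserves isomorphisms; hence $S$ is a subgroup of $k^\times$, and more generally $\{\beta \mid M_\beta \iso M_\alpha\} = \alpha S$. In particular every fibre of $\alpha \mapsto [M_\alpha]$ is a coset of $S$, so all fibres have the same cardinality $|S|$. Now the orbit $\mathcal{O} = \GL_d \cdot p$ is locally closed in $\mathcal{V}$, so $S = \psi^{-1}(\mathcal{O})$ is constructible. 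A constructible subgroup of an algebraic group is closed: $\overline{S}$ is a closed subgroup, $S$ contains a dense open $U_0$ of the irreducible identity component $\overline{S}^{\,\circ}$, and for $h \in \overline{S}^{\,\circ}$ the dense opens $hU_0^{-1}$ and $U_0$ meet, giving $h \in S$, whence $\overline{S}^{\,\circ}\subseteq S$ and then $\overline{S} = S$ by finiteness of the component group. Since the only closed subgroups of $\mathbb{G}_m$ are the finite $\mu_n$ and $\mathbb{G}_m$ itself, $S$ is either finite or all of $k^\times$. This already yields the dichotomy of the first halves of (1) and (2), which are mutually exclusive (for $S = k^\times$ the fibres are infinite since $k$ is infinite) and exhaustive; and in the finite case $S \neq k^\times$ shows, via the easy direction above, that $M$ is not gradable.

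The remaining, and main, obstacle is the converse gradability statement: $S = k^\times \then M$ is gradable. Here $H := \Aut_R(M) = \End_R(M)^\times = \operatorname{Stab}_{\GL_d}(p)$ is connected, being a principal open subset of the vector space $\End_R(M)$, and $S = k^\times$ says the commuting twist action preserves $\mathcal{O} = \GL_d/H$. Equivalently, the stabilizer $K \subseteq \GL_d \times \mathbb{G}_m$ of $p$ surjects onto the twisting $\mathbb{G}_m$ with kernel $H$, and a cocharacter splitting of this surjection is exactly a cocharacter $\lambda$ with $\lambda(\alpha)\cdot p = \alpha\cdot p$, i.e. the sought grading. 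I expect the hard part to be producing this splitting: a maximal torus $T_K$ of $K$ surjects onto $\mathbb{G}_m$ (images of maximal tori are maximal tori), and in characteristic zero a Levi/reductivity argument gives the cocharacter, but in positive characteristic one must verify that $T_K \to \mathbb{G}_m$ admits a cocharacter section, ruling out an inseparable Frobenius-type obstruction using connectedness of $H$ and that $k$ is algebraically closed (hence perfect). Once $\lambda$ is obtained, its weight decomposition grades $M$, completing case (1) and thereby the full dichotomy.
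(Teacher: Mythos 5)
Your geometric reduction of the dichotomy is sound, and genuinely different from the paper's argument: realizing the twists $M_{\alpha}$ as a $\mathbb{G}_m$-line through the point $p$ in the module variety, observing that $S=\{\alpha\in k^{\times}\mid M_{\alpha}\iso M\}$ is a constructible subgroup of $k^{\times}$, hence closed, hence finite or all of $k^{\times}$, and noting that the fibres of $\alpha\mapsto[M_{\alpha}]$ are cosets of $S$ --- all of this is correct, as is the easy implication (gradable $\then$ all twists isomorphic) and the identification of gradings of $M$ with cocharacters $\lambda$ satisfying $\lambda(\alpha)\cdot p=\alpha\cdot p$. For that half of the statement your subgroup argument is a clean alternative to the paper, which establishes the finiteness in case (2) instead by an elementary eigenvalue computation (an isomorphism $M_{\alpha}\iso M_{\beta}$ forces $\beta/\alpha$ to be a root of unity of order at most $g\cdot\dim M$).

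The genuine gap is in the step you yourself flag as the hard part, $S=k^{\times}\then M$ gradable, which is where the whole weight of the theorem lies. A surjection from a torus onto $\mathbb{G}_m$ is given by a nonzero character $\chi$ and admits a cocharacter section if and only if $\chi$ is \emph{primitive}; the map $t\mapsto t^{n}$ is surjective with no section, so surjectivity of $T_K\to\mathbb{G}_m$ alone proves nothing. Connectedness of $H=\Aut_R(M)$ can at best be used (after passing to identity components, and using that $T_K\cap H$ meets the connected normal kernel in a maximal torus) to kill the prime-to-$p$ part of the imprimitivity index, forcing $\chi=p^{e}\chi_0$; nothing you invoke excludes $e\geq 1$. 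Perfectness of $k$ does not help: in characteristic $p$ the Frobenius $\mathbb{G}_m\to\mathbb{G}_m$, $t\mapsto t^{p}$, is bijective on $k$-points over an algebraically closed field and still admits no section, and moreover the scheme-theoretic stabilizer of $p$ need not be smooth, so $K\to\mathbb{G}_m$ may a priori be inseparable. Since the theorem is asserted over an arbitrary algebraically closed field, this is a real gap and not a technicality (in characteristic $0$ your argument can be completed). The paper's proof sidesteps the issue uniformly in all characteristics: choose $\alpha,\beta$ with $\beta/\alpha$ of multiplicative order exceeding $g\cdot\dim M$ (possible even over $\overline{\mathbb{F}}_p$, where every scalar is a root of unity but of unbounded order), take the generalized eigenspace decomposition $M=\bigoplus_{\lambda}M(\lambda)$ of an isomorphism $\psi\colon M_{\alpha}\to M_{\beta}$, check that multiplication by a homogeneous $r$ maps $M(\lambda)$ into $M\bigl((\beta/\alpha)^{\deg r}\lambda\bigr)$, and lift the resulting $\left<\beta/\alpha\right>$-grading to a $\mathbb{Z}$-grading using the order bound. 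In your language this amounts to replacing an arbitrary point $(g,\beta)\in K(k)$ by the semisimple part of $g$ and working with the diagonalizable closure of the group it generates; some such concrete device is needed, because the abstract splitting of $K\to\mathbb{G}_m$ you hope for is exactly what can fail in positive characteristic.
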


\begin{proof}
By assumption $R$ is generated (as $k$-algebra) by a finite set $\{ r_i \}$ of homogeneous elements. We set $g = \max \{ | \deg r_i | \}$.

Assume $M_{\alpha} \iso M_{\beta}$, and $(\frac{\alpha}{\beta})^i \neq 1$ for $i \leq g \cdot \dim M$. Let $\psi \colon M_{\alpha} \to M_{\beta}$ be an isomorphism. Then $\psi$ can be considered as automorphism of the $k$-vector space $M$. Let 
\[ M = \bigoplus_{\lambda \in k^{\times}} M(\lambda) \]
be the generalized eigenspace decomposition of $M$ with respect to $\psi$. Let $r \in R$ be homogeneous. For $m \in M$ we have
\begin{align*}
\left( \psi - \left( \frac{\beta}{\alpha} \right)^{\deg r} \lambda \right) (m r) & = \psi(mr) - \left( \frac{\beta}{\alpha} \right)^{\deg r} \lambda m \cdot r \\
& = \left( \frac{1}{\alpha} \right)^{\deg r} \psi(m \cdot_{\alpha} r) - \left( \frac{\beta}{\alpha} \right)^{\deg r} \lambda m r \\
& = \left( \frac{1}{\alpha} \right)^{\deg r} \psi(m) \cdot_{\beta} r - \left( \frac{\beta}{\alpha} \right)^{\deg r} \lambda m r \\
& = \left( \frac{\beta}{\alpha} \right)^{\deg r} \left( \psi - \lambda  \right)(m) r
\end{align*}
It follows that for $m \in M(\lambda)$ we have $mr \in M(\left( \frac{\beta}{\alpha} \right)^{\deg r} \lambda)$. If we denote by $\left< \frac{\beta}{\alpha} \right>$ the cyclic subgroup of $k^{\times}$ generated by $\frac{\beta}{\alpha}$ we obtain a direct sum decomposition of $M$ into the summands
\[ \bigoplus_{\sigma \in \left< \frac{\beta}{\alpha} \right>} M(\sigma \lambda) \]
where $\lambda$ runs over representatives of the cosets of $\left< \frac{\beta}{\alpha} \right>$ in $k^{\times}$. Moreover these summands are $\left< \frac{\beta}{\alpha} \right>$-gradable. Since the order of $\left< \frac{\beta}{\alpha} \right>$ is bigger than $g \cdot \dim M$ these $\left< \frac{\beta}{\alpha} \right>$-gradings can be lifted to $\mathbb{Z}$-gradings. Thus $M$ is gradable.

Conversely, if $M$ is gradable then it is immediate that
\begin{align*}
M & \to M_{\alpha} \\
m & \mapsto \alpha^{\deg m} m
\end{align*}
gives an isomorphism of $R$-modules for any $\alpha \in k^{\times}$.
\end{proof}

\begin{definition}
Let $R$ be a finitely generated $k$-algebra and $M$ a finite-dimensional $R$-module. We say that $M$ \emph{has an open orbit} if the orbit of $M$ (under the natural ${\rm GL}(\dim M, k)$-action) in the variety of $(\dim M)$-dimensional $R$-modules is open.
\end{definition}

With this definition we have the following immediate consequence of Theorem~\ref{theorem.param-ungr}.

\begin{corollary} \label{corollary.open-image}
Let $R$ be a finitely generated $\mathbb{Z}$-graded $k$-algebra, and $M$ a finite-dimensional $R$-module, which has an open orbit. Then $M$ is gradable.
\end{corollary}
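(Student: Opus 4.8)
The plan is to deduce this directly from the dichotomy in Theorem~\ref{theorem.param-ungr} by supplying one geometric input: the twisting family is algebraic in the parameter. Concretely, I would fix homogeneous generators $r_1, \dots, r_n$ of $R$ with $\deg r_i = d_i$, so that a $(\dim M)$-dimensional module is recorded by a tuple of matrices $(\rho(r_1), \dots, \rho(r_n))$ acting on the fixed vector space $M$ and subject to the relations of $R$. If $M$ corresponds to $(A_1, \dots, A_n)$, then by the very definition of the twist, $M_\alpha$ corresponds to $(\alpha^{d_1} A_1, \dots, \alpha^{d_n} A_n)$; since $\sigma_\alpha$ is an algebra automorphism this tuple again satisfies the relations, so it is a genuine point of the representation variety $\Rep$. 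Thus $\alpha \mapsto M_\alpha$ defines a morphism of varieties $\phi \colon k^\times \to \Rep$ with $\phi(1) = M$.

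First I would record that the $\GL(\dim M, k)$-orbit of a point of $\Rep$ is precisely its isomorphism class, so that $\phi^{-1}(\mathcal{O}) = \{\alpha \in k^\times \mid M_\alpha \cong M\}$, where $\mathcal{O}$ denotes the orbit of $M$. By hypothesis $\mathcal{O}$ is open in $\Rep$; since $\phi$ is a morphism it is continuous for the Zariski topology, so $\phi^{-1}(\mathcal{O})$ is open in $k^\times$, and it is nonempty since it contains $1$.

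Next I would invoke the geometry of the affine line: $k^\times$ is an irreducible curve, so every nonempty open subset is the complement of a finite set and is therefore infinite (as $k$ is algebraically closed, hence infinite). Consequently $\{\alpha \in k^\times \mid M_\alpha \cong M\}$ is infinite. Taking the base point $\alpha = 1$, this says there are infinitely many $\beta$ with $M_1 \cong M_\beta$, which is incompatible with alternative~(2) of Theorem~\ref{theorem.param-ungr} (where each isomorphism class is attained only finitely often). Since the two alternatives are mutually exclusive, alternative~(1) must hold, and hence $M$ is gradable.

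I expect the only genuine content to be the verification that $\phi$ lands in $\Rep$ and is a morphism, that is, that twisting is algebraic in the parameter; everything afterward is the formal combination of ``preimage of an open set is open'' with the dichotomy already proved. The one point to watch is that $\Rep$ may be reducible, but this causes no difficulty: openness of $\mathcal{O}$ in $\Rep$ still yields an open, and hence cofinite, preimage in the irreducible curve $k^\times$, no matter which components $M$ and the various $M_\alpha$ happen to occupy.
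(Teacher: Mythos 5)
Your proposal is correct and is essentially the paper's own proof, spelled out in full: the paper likewise observes that the $M_\alpha$ form a (rational) curve in the representation variety, that openness of the orbit forces almost all $M_\alpha$ to be isomorphic to $M$, and that this places $M$ in case~(1) of Theorem~\ref{theorem.param-ungr}. Your verification that $\alpha \mapsto M_\alpha$ is an algebraic morphism and your remark about possible reducibility of the representation variety simply make explicit what the paper leaves implicit.
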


\begin{proof}
The $M_{\alpha}$ form a line in the representation variety. Since $M$ has an open orbit they are almost all isomorphic to $M$. Hence we are in the first case of Theorem~\ref{theorem.param-ungr}.
\end{proof}

\begin{corollary} \label{corollary.rigid-image}
Let $R$ be a finitely generated $\mathbb{Z}$-graded $k$-algebra, and $M$ a finite-dimensional $R$-module, such that $\Ext^1_R(M,M) = 0$. Then $M$ is gradable.
\end{corollary}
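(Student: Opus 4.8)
The plan is to deduce the statement from Corollary~\ref{corollary.open-image}: it suffices to show that a module $M$ with $\Ext^1_R(M,M) = 0$ has an open orbit, since the interaction with the grading has already been handled in that corollary. Thus the only genuinely new ingredient is the classical fact that \emph{rigid modules have open orbits} (Voigt's Lemma), adapted to the present setting where $R$ is finitely generated but possibly infinite-dimensional. I would first record that, because $R$ is finitely generated and $M$ is finite-dimensional, the relevant representation scheme is of finite type: fixing $d = \dim_k M$ and writing $\Rep_d(R) = \Hom_{k\text{-alg}}(R, M_d(k))$, a point is a homomorphism $\rho$, the group $\GL_d(k)$ acts by conjugation, and the orbits are exactly the isomorphism classes of $d$-dimensional $R$-modules. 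Let $\rho$ be the point corresponding to $M$.

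The key computation is the identification of the two relevant tangent spaces. The tangent space $T_\rho \Rep_d(R)$ consists of the derivations $\delta \colon R \to \End_k M$ (where $\End_k M$ is the $R$-bimodule defined through $\rho$), i.e.\ the cocycles $Z^1(R, \End_k M)$; this is precisely the condition that $\rho + \epsilon\delta$ be an algebra homomorphism modulo $\epsilon^2$. The tangent space to the orbit $\GL_d(k)\cdot\rho$ is the space of inner derivations $B^1(R, \End_k M)$, namely those of the form $\delta(a) = \rho(a)x - x\rho(a)$ for $x \in \End_k M$. Their quotient is $\HH^1(R, \End_k M)$, and using that $M$ is finite-dimensional (so $\End_k M \iso M \otimes_k DM$ as a bimodule) together with the standard isomorphism $\HH^n(R, \Hom_k(M,N)) \iso \Ext^n_R(M,N)$, we get
\[ T_\rho \Rep_d(R) / T_\rho(\GL_d(k)\cdot\rho) \iso \HH^1(R, \End_k M) \iso \Ext^1_R(M,M) = 0. \]

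From here the openness of the orbit follows by a dimension count. The map $x \mapsto (\,a \mapsto \rho(a)x - x\rho(a)\,)$ has kernel $\End_R M$, so $\dim_k B^1 = d^2 - \dim_k \End_R M$, which is exactly the dimension of the orbit. Since $\Ext^1_R(M,M) = 0$ forces $Z^1 = B^1$, we obtain $\dim_k T_\rho \Rep_d(R) = \dim(\GL_d(k)\cdot\rho)$. Combining this with the inequality $\dim_\rho \Rep_d(R)_{\mathrm{red}} \leq \dim_k T_\rho \Rep_d(R)$ and the inclusion of the orbit into $\Rep_d(R)_{\mathrm{red}}$, all of these numbers must coincide; hence $\Rep_d(R)_{\mathrm{red}}$ is smooth at $\rho$ of local dimension equal to $\dim(\GL_d(k)\cdot\rho)$, so the orbit, being an irreducible locally closed subset of full local dimension through a smooth point, is open near $\rho$ and therefore open. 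Corollary~\ref{corollary.open-image} then gives that $M$ is gradable.

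The step I expect to be the main obstacle is this last openness argument, and in particular the two places where the naive picture could break down: first, the scheme $\Rep_d(R)$ may be non-reduced, which is why I pass to $\Rep_d(R)_{\mathrm{red}}$ and only use the one-sided tangent-space inequality rather than an identification; and second, $R$ is not assumed finite-dimensional, so one must check that the representation scheme is still of finite type and that the tangent-space identification $\HH^1(R, \End_k M) \iso \Ext^1_R(M,M)$ remains valid. Both points are fine precisely because $R$ is finitely generated and $M$ is finite-dimensional, and once they are settled the tangent-space bookkeeping underlying Voigt's Lemma is entirely routine.
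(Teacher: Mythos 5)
Your proposal is correct and follows essentially the same route as the paper: the paper's proof is a two-line reduction that cites Voigt \cite[\S~3.5]{Voigt} for the implication ``$\Ext^1_R(M,M)=0$ implies $M$ has an open orbit'' and then invokes Corollary~\ref{corollary.open-image}, exactly as you do. The only difference is that you inline a (correct, and appropriately careful about non-reducedness of the representation scheme and about $R$ being merely finitely generated) tangent-space proof of Voigt's Lemma where the paper simply gives the reference.
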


\begin{proof}
By \cite[\S~3.5]{Voigt}, $\Ext^1_R(M,M) = 0$ implies that $M$ has an open orbit. Hence the claim follows from Corollary~\ref{corollary.open-image}.
\end{proof}

\section{Application of results on gradable modules}

In this section we use results on gradable modules by Gordon and Green \cite{GG} and from the previous section to the situation of the graded algebra $\widetilde{\Lambda}$, for a $\tau_2$-finite algebra $\Lambda$ (see Subsection~\ref{subsect.intro_Amiotcluster}). By Theorem~\ref{theorem.imgr} this yields results on the image of the derived category in the cluster category.

\subsection{Auslander-Reiten components}

We recall the following result of Gordon and Green.

\begin{theorem}[{\cite[Theorem~4.2]{GG}}] \label{theorem.gordon-green}
Let $R$ be graded finite-dimensional $k$-algebra. A component of the Auslander-Reiten quiver of (ungraded) $R$-modules either only contains gradable modules, or does not contain any gradable modules.
\end{theorem}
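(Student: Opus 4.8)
The plan is to show that gradability is constant on connected components of the Auslander-Reiten quiver, and the natural mechanism is the twist automorphisms $\sigma_\alpha$ from the previous section together with the characterization in Theorem~\ref{theorem.param-ungr}. The key observation is that each $\sigma_\alpha$ is an algebra automorphism of $R$, so twisting by it, $M \mapsto M_\alpha$, is a self-equivalence of $\mod R$. Such a self-equivalence must preserve the Auslander-Reiten structure: it sends indecomposables to indecomposables, almost split sequences to almost split sequences, and irreducible maps to irreducible maps. Consequently $\sigma_\alpha$ induces an automorphism of the Auslander-Reiten quiver of $R$, and in particular it permutes the connected components.

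First I would fix a connected component $\mathscr{C}$ of the AR-quiver and suppose, for contradiction, that it contains both a gradable module $M$ and a non-gradable module $N$. By Theorem~\ref{theorem.param-ungr}, gradability of $M$ is equivalent to $M_\alpha \iso M$ for all $\alpha \in k^\times$, while non-gradability of $N$ means that for each $\alpha$ only finitely many $\beta$ give $N_\beta \iso N_\alpha$. Next I would exploit the fact that the twist $M \mapsto M_\alpha$ acts as a \emph{continuous} family of automorphisms of the AR-quiver parametrized by the connected variety $k^\times$. Because $M$ is gradable, every vertex in its twist-orbit is $M$ itself, so the induced quiver automorphism $\varphi_\alpha$ fixes the vertex $[M]$ for every $\alpha$. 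The idea is that fixing one vertex of a connected component, together with the fact that quiver automorphisms must preserve the arrow structure and the translation $\tau$, forces $\varphi_\alpha$ to fix \emph{every} vertex of the component $\mathscr{C}$ for every $\alpha$ — this is where connectedness does the work, propagating the fixed-point property along arrows outward from $[M]$.

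Granting that, I would conclude that $N_\alpha \iso N$ for all $\alpha \in k^\times$, since $[N]$ lies in the same component $\mathscr{C}$ and is therefore fixed by each $\varphi_\alpha$. But then Theorem~\ref{theorem.param-ungr} forces $N$ into case~(1), i.e. $N$ is gradable, contradicting the choice of $N$. The symmetric argument (starting from a non-gradable module and deriving a contradiction with the existence of a gradable one) is subsumed by the same dichotomy. Hence no component can mix the two types, which is exactly the claim.

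The main obstacle I expect is the propagation step: showing that a quiver automorphism induced by $\sigma_\alpha$ which fixes one vertex of a connected AR-component must fix the whole component. A priori $\varphi_\alpha$ could fix $[M]$ but permute its AR-neighbours nontrivially, so fixing a single vertex is not literally enough. The honest way to close this gap is to use the \emph{one-parameter} nature of the family: the assignment $\alpha \mapsto \varphi_\alpha$ is a homomorphism from the connected group $k^\times$ into the (discrete) automorphism group of the quiver's vertex set restricted to $\mathscr{C}$, and a continuous map from a connected space into a discrete set is constant, so $\varphi_\alpha$ must be the identity permutation on $\mathscr{C}$ for all $\alpha$ as soon as it is the identity for $\alpha = 1$. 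Making ``continuous'' precise — e.g.\ by noting that $\{\alpha \mid M_\alpha \iso M\}$ is Zariski-closed in $k^\times$ for any fixed indecomposable $M$, so that the locus where $\varphi_\alpha$ fixes a given finite set of vertices is closed and, by the gradable case, all of $k^\times$ — is the delicate point that must be handled carefully rather than waved through.
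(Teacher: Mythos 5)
First, a point of comparison: the paper does not prove this statement at all --- it is quoted verbatim from Gordon--Green \cite[Theorem~4.2]{GG}, so there is no in-paper proof to measure you against. Gordon--Green's own argument is local rather than global: they show that a gradable module admits a \emph{graded} almost split sequence whose underlying ungraded sequence is again almost split, so by uniqueness of AR-sequences and Krull--Schmidt the AR-neighbours (translate and summands of the middle term) of a gradable module are gradable, and gradability then propagates along the connected component. Your skeleton --- the twists $(-)_{\alpha}$ are self-equivalences of $\mod R$ inducing automorphisms $\varphi_{\alpha}$ of the AR-quiver, combined with the dichotomy of Theorem~\ref{theorem.param-ungr} --- is a legitimate alternative mechanism, but it has a genuine gap exactly at the propagation step you flagged, and the continuity fix you sketch does not close it.

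Concretely: knowing that each $\varphi_{\alpha}$ fixes the vertex $[M]$ does not make $\alpha \mapsto \varphi_{\alpha}$ a ``continuous map from a connected group to a discrete group.'' The only topological input you offer is that each stabilizer $H_N = \{\alpha \mid N_{\alpha} \iso N\}$ is Zariski-closed, and that is perfectly compatible with $\alpha \mapsto [N_{\alpha}]$ being non-constant: for a \emph{non-gradable} $N$ the fibres of this map are the cosets of the finite subgroup $H_N$, so the map is non-constant with all fibres closed --- your ``connected into discrete'' principle, taken at face value, would prove every module gradable, which is false. (Closedness of $H_N$ also isn't free as you assert it: orbits in module varieties are only locally closed, so $\{\alpha \mid N_{\alpha} \iso N\}$ is a priori constructible; it is closed because it is a constructible \emph{subgroup} of $k^{\times}$, or simply because Theorem~\ref{theorem.param-ungr} already says it is finite or everything.) What actually closes the argument is a finiteness you never invoke: $\varphi_{\alpha}$ is a quiver automorphism fixing $[M]$, hence maps the component $\mathscr{C}$ to itself and preserves graph distance to $[M]$; since AR-quivers of finite-dimensional algebras are locally finite, the orbit $\{[N_{\alpha}] \mid \alpha \in k^{\times}\}$ lies in the finite set of vertices at distance $d([M],[N])$ from $[M]$. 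As $k^{\times}$ is infinite, some isomorphism class recurs for infinitely many $\alpha$, i.e.\ $N_{\alpha} \iso N_{\beta}$ for infinitely many $\beta$, and Theorem~\ref{theorem.param-ungr} then forces $N$ into case (1), gradable --- the desired contradiction. Alternatively, and closer to Gordon--Green, twist the almost split sequence ending at the gradable $M$: since $M_{\alpha} \iso M$, uniqueness of AR-sequences shows $\{[N_{\alpha}]\}$ for each neighbour $N$ is confined to the finitely many summands of the middle term, and the same pigeonhole plus induction along $\mathscr{C}$ finishes. Either repair is short, but without one of them the proposal does not constitute a proof.
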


Applying this to our setup we obtain the following:

\begin{theorem} \label{theorem.image_ARclosed}
Let $\Lambda$ be a $\tau_2$-finite algebra. Then the image of the derived category in the cluster category is a union of Auslander-Reiten components.
\end{theorem}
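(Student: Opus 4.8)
The plan is to transport the Gordon--Green dichotomy (Theorem~\ref{theorem.gordon-green}) from $\mod\widetilde\Lambda$ to $\mathscr{C}_\Lambda$ along the functor $\mathtt M$, using Theorem~\ref{theorem.imgr} to identify $\Im\pi$ with the gradable locus. Since $\Lambda$ is $\tau_2$-finite, $\mathscr{C}_\Lambda$ is a $\Hom$-finite Krull--Schmidt $2$-CY category (Theorem~\ref{theorem.is2cy}), so it admits Auslander--Reiten triangles, with Auslander--Reiten translate $\tau = [1]$. Hence to prove that $\Im\pi$ is a union of Auslander--Reiten components it is enough to show that there is no irreducible map in $\mathscr{C}_\Lambda$ between an indecomposable $X \in \Im\pi$ and an indecomposable $Y \notin \Im\pi$. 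Throughout I would record two facts: first, $\Im\pi$ is stable under $[1] = \tau$ and its inverse, because $\pi$ is a triangle functor; second, by Theorem~\ref{theorem.imgr} an indecomposable $X$ lies in $\Im\pi$ if and only if $\mathtt M X$ is gradable, where $\mathtt M(\add\pi\Lambda[1]) = 0$ is vacuously gradable.

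First I would treat the generic case, where neither $X$ nor $Y$ lies in $\add\pi\Lambda \cup \add\pi\Lambda[1]$. Here $\mathtt M X$ and $\mathtt M Y$ are nonzero indecomposable $\widetilde\Lambda$-modules, and I would invoke the standard compatibility of Auslander--Reiten theory with the cluster-tilting functor: for an object $Z$ off the boundary the functor $\mathtt M = \Hom_{\mathscr C}(\pi\Lambda, -)$ sends the Auslander--Reiten triangle $\tau Z \to E \to Z \to (\tau Z)[1]$ to the Auslander--Reiten sequence $0 \to \mathtt M\tau Z \to \mathtt M E \to \mathtt M Z \to 0$ ending at $\mathtt M Z$ (the $2$-CY analogue of the correspondence of \cite{BMR_cta}). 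Consequently an irreducible map between $X$ and $Y$ produces an arrow between $\mathtt M X$ and $\mathtt M Y$ in the Auslander--Reiten quiver of $\mod\widetilde\Lambda$, so $\mathtt M X$ and $\mathtt M Y$ lie in a common component. By Theorem~\ref{theorem.gordon-green} they are then either both gradable or both non-gradable, whence $X \in \Im\pi \iff Y \in \Im\pi$, which rules out such a crossing map.

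It remains to handle the boundary objects, which is the step I expect to be the main obstacle, as this is precisely where $\mathtt M$ fails to be an equivalence. The summands of $\pi\Lambda$ and of $\pi\Lambda[1]$ themselves lie in $\Im\pi$ (being $\pi$ applied to $\Lambda$ and to $\Lambda[1]$), so it suffices to show that every Auslander--Reiten neighbour of such an object again lies in $\Im\pi$. Using $\tau$-stability I may reduce to a summand $\pi\Lambda_i$, which maps under $\mathtt M$ to an indecomposable projective $P_i = \widetilde\Lambda e_i$. Applying $\mathtt M$ to the Auslander--Reiten triangle ending at $\pi\Lambda_i$ realizes the inclusion $\Rad P_i \hookrightarrow P_i$; thus every indecomposable admitting an irreducible map to $\pi\Lambda_i$ is sent to a summand of $\Rad P_i$. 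Since $\widetilde\Lambda$ is positively graded (Theorem~\ref{theo.tensoralg}), $P_i$ is a graded module and $\Rad P_i$ is a graded submodule, hence gradable, so these neighbours lie in $\Im\pi$ by Theorem~\ref{theorem.imgr}. The remaining neighbours --- those receiving an irreducible map from $\pi\Lambda_i$, together with all neighbours of $\pi\Lambda_i[1]$ --- follow from this by the $[1]^{\pm 1}$-stability of $\Im\pi$. Combining the two cases shows that no irreducible map crosses the partition of the indecomposables into $\Im\pi$ and its complement, so $\Im\pi$ is a union of Auslander--Reiten components. The delicate point is not the Gordon--Green input but the precise form of the Auslander--Reiten correspondence under $\mathtt M$ at and next to the deleted vertices $\pi\Lambda[1]$; making that bookkeeping rigorous is where the real work lies.
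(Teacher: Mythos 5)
Your proposal is correct and takes essentially the same approach as the paper: both identify $\Im \pi$ with the gradable locus via Theorem~\ref{theorem.imgr}, transport the question along $\mathtt{M}$ using the standard compatibility of Auslander--Reiten structure with cluster-tilting from \cite{BMR_cta} and \cite{KoenigZhu} (which the paper packages, without proof, as Observation~\ref{obs.AR_decomp}), and conclude with Gordon--Green (Theorem~\ref{theorem.gordon-green}). Your arrow-by-arrow treatment of the boundary via $\Rad P_i$ and the $[\pm 1]$-stability of $\Im \pi$ is just a local reformulation of the paper's component-wise argument that every Auslander--Reiten component of $\mathtt{M}\mathscr{A}$ meeting $\pi\Lambda$ contains a gradable projective or injective module; note only that your $\Rad P_i$ step tacitly uses that direct summands of gradable modules are gradable, which is due to Gordon--Green and can also be sidestepped by observing that such a summand lies in the same component as the graded projective $P_i$.
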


For the proof we need the following observation.

\begin{observation} \label{obs.AR_decomp}
Let $\mathscr{A}$ be an Auslander-Reiten component of $\mathscr{C}_{\Lambda}$. If $\mathscr{A}$ does not contain any summand of $\pi \Lambda$, then $\mathtt{M} \mathscr{A}$ is an Auslander-Reiten component of $\mod \widetilde{\Lambda}$. If $\mathscr{A}$ contains a summand of $\pi(\Lambda)$ then $\mathtt{M} \mathscr{A}$ is a union of Auslander-Reiten components of $\mod \widetilde{\Lambda}$, and all these components contain a projective or an injective $\widetilde{\Lambda}$-module.
\end{observation}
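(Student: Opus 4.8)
The plan is to read everything off the equivalence $\mathtt{M}\colon \mathscr{C}_\Lambda/(\pi\Lambda[1]) \tol[30]{\approx} \mod\widetilde{\Lambda}$ recalled at the start of the section, so that the combinatorics of Auslander--Reiten (AR) components is governed by the single exceptional object $\pi\Lambda[1]$. Since $\mathscr{C}_\Lambda$ is $2$-Calabi--Yau (Theorem~\ref{theorem.is2cy}), its Serre functor is $[2]$ and its AR translate is $\tau_{\mathscr{C}} = [1]$; in particular every AR component of $\mathscr{C}_\Lambda$ is closed under $[1]$ and $[-1]$. First I would record the object-level dictionary: $\mathtt{M}$ annihilates exactly the summands of $\pi\Lambda[1]$ and is a bijection between the remaining indecomposables of $\mathscr{C}_\Lambda$ and those of $\mod\widetilde{\Lambda}$; the indecomposable projectives are the $\mathtt{M} T_j$ for $T_j$ an indecomposable summand of $\pi\Lambda$, and, using $2$-Calabi--Yau duality $\mathtt{M}(\pi\Lambda[2]) = \Hom_{\mathscr{C}}(\pi\Lambda,\pi\Lambda[2]) \iso D\widetilde{\Lambda}$, the indecomposable injectives are the $\mathtt{M}(T_j[2])$.

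Next I would settle which components meet the special objects. Because components are $[\pm 1]$-stable, a component $\mathscr{A}$ contains a summand of $\pi\Lambda$ if and only if it contains the corresponding summand of $\pi\Lambda[n]$ for every $n$. Hence in the first case of the Observation $\mathscr{A}$ contains no summand of $\pi\Lambda[1]$ (so $\mathtt{M}$ is injective on the objects of $\mathscr{A}$), no summand of $\pi\Lambda$ and no summand of $\pi\Lambda[2]$ (so every object of $\mathtt{M}\mathscr{A}$ is indecomposable, non-projective and non-injective); while in the second case $\mathscr{A}$ contains the killed objects $T_j[1]$, the projectives $P_j = \mathtt{M} T_j$ and the injectives $I_j = \mathtt{M}(T_j[2])$.

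The technical core, and the step I expect to be the main obstacle, is the transport of AR structure through $\mathtt{M}$. Applying the cohomological functor $\mathtt{M} = \Hom_{\mathscr{C}}(\pi\Lambda,-)$ to an AR triangle $Z[1] \to[30] E \to[30] Z \to[30] Z[2]$ and using the cluster-tilting vanishing $\Hom_{\mathscr{C}}(\pi\Lambda,(\add\pi\Lambda)[1]) = 0$, I would show that whenever $\mathtt{M} Z$ is non-projective the resulting short exact sequence is the AR sequence ending at $\mathtt{M} Z$, so that the AR translate $\tau_{\widetilde{\Lambda}}$ of $\mod\widetilde{\Lambda}$ satisfies $\tau_{\widetilde{\Lambda}}\mathtt{M} Z = \mathtt{M}(Z[1])$ and irreducible maps correspond; dually at injectives. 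At the boundary this must degenerate correctly: the AR triangle around $T_j$ maps to the sink map $\Rad P_j \hookrightarrow P_j$ and the one around $T_j[2]$ to the source map $I_j \twoheadrightarrow I_j/\Soc I_j$, the ``missing'' terms being exactly the annihilated object $T_j[1]$. This is the same Buan--Marsh--Reiten analysis already invoked for the equivalence \cite[Theorem~2.2]{BMR_cta}, and carrying it out carefully near $\pi\Lambda[1]$ is where the real work lies.

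Granting this, both assertions follow by a connectedness argument. In the first case no object of $\mathscr{A}$ is killed and none is projective or injective, so meshes transport isomorphically; since the middle term of each AR sequence in the image is $\mathtt{M}$ of the middle term of an AR triangle inside $\mathscr{A}$, the set $\mathtt{M}\mathscr{A}$ is closed under passing to AR-quiver neighbours, hence is a single full AR component. In the second case, deleting the killed vertices $\{T_j[1]\}$ disconnects the AR quiver of $\mathscr{A}$ into pieces, and $\mathtt{M}$ identifies these pieces (up to the boundary reconnection above) with the AR components making up $\mathtt{M}\mathscr{A}$. Since $\mathscr{A}$ is connected and the deleted set is non-empty, every piece is adjacent in $\mathscr{A}$ to some killed $T_j[1]$; a neighbour $W$ with an arrow $T_j[1] \to[30] W$ also has an arrow $W \to[30] T_j$, so $\mathtt{M} W$ lies in the same component as $P_j = \mathtt{M} T_j$, while a neighbour with $W \to[30] T_j[1]$ has $T_j[2] \to[30] W$, putting $\mathtt{M} W$ in the component of $I_j = \mathtt{M}(T_j[2])$. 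Thus every AR component of $\mathtt{M}\mathscr{A}$ contains a projective or an injective, as claimed.
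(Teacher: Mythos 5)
The paper states this Observation without any proof (it is offered as a known consequence of the Buan--Marsh--Reiten/Koenig--Zhu theory of cluster-tilting quotients), and your argument is correct and fills in exactly the reasoning the authors take for granted: the equivalence $\mathscr{C}_{\Lambda}/(\pi\Lambda[1]) \simeq \mod\widetilde{\Lambda}$, the object dictionary with $\tau_{\mathscr{C}}=[1]$ identifying the projectives as $\mathtt{M}T_j$ and the injectives as $\mathtt{M}(T_j[2])$, and the transport of AR triangles to AR sequences with the boundary degenerations at $\add\pi\Lambda[1]$. The boundary step you flag as the core does go through cleanly: applying $\Hom_{\mathscr{C}}(\pi\Lambda,-)$ to the AR triangle $T_j[1]\to E\to T_j\to T_j[2]$ gives $\mathtt{M}E\iso\Rad P_j$ on the nose (injectivity from $\Hom(\pi\Lambda,\pi\Lambda[1])=0$, surjectivity onto $\Rad P_j$ from fullness and the almost split property), and dually $I_j\twoheadrightarrow I_j/\Soc I_j$, after which your deletion-and-adjacency argument for the second case is exactly right.
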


\begin{proof}[Proof of Theorem~\ref{theorem.image_ARclosed}]
By Observation~\ref{obs.AR_decomp} we have to consider two cases:

If $\mathscr{A}$ is an Auslander-Reiten component of $\mathscr{C}_{\Lambda}$ not containing any summand of $\pi \Lambda$ then $\mathtt{M} \mathscr{A}$ is an Auslander-Reiten component of $\mod \widetilde{\Lambda}$, and by Theorems~\ref{theorem.imgr} and \ref{theorem.gordon-green} either all or no objects in $\mathscr{A}$ lie in the image of $\pi$.

%By Theorem~\ref{theorem.imgr} an indecomposable object $X \in \mathscr{C}_{\Lambda}$ lies in the image of the derived category if and only if the $\widetilde{\Lambda}$-module $\mathtt{M}X$ is gradable. Hence, by Theorem~\ref{theorem.gordon-green} above, any Auslander-Reiten component of $\mathscr{C}_{\Lambda}$ not containing a summand of $\pi \Lambda$ either lies entirely in the image or does not contain any object in the image.

Now consider a component $\mathscr{A}$ of the Auslander-Reiten quiver of $\mathscr{C}_{\Lambda}$ which contains a summand of $\pi \Lambda$. %Its image in $\mod \widetilde{\Lambda}$ might consist of several Auslander-Reiten components. However, 
By Observation~\ref{obs.AR_decomp} each of the Auslander-Reiten components of $\mathtt{M} \mathscr{A}$ contains at least one projective or injective, hence gradable, module. So, by Theorem~\ref{theorem.gordon-green}, all objects in these components are gradable. Hence, by Theorem~\ref{theorem.imgr}, all objects in $\mathscr{A}$ lie in the image of $\pi$.
\end{proof}

\subsection{Rigid objects}

We call an object in a triangulated or exact category \emph{rigid} if $\Ext^1(X,X) = 0$.

\begin{theorem}
Let $\Lambda$ be a $\tau_2$-finite algebra. Let $X \in \mathscr{C}_{\Lambda}$ be rigid. Then $X$ is in the image of $\pi \colon \mathscr{D}_{\Lambda} \to \mathscr{C}_{\Lambda}$.
\end{theorem}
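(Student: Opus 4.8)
The plan is to deduce the statement from Theorem~\ref{theorem.imgr} and Corollary~\ref{corollary.rigid-image}. Since $\Lambda$ is $\tau_2$-finite, $\widetilde{\Lambda}$ is finite-dimensional, hence a finitely generated $\mathbb{Z}$-graded $k$-algebra, and $\mathtt{M} X = \Hom_{\mathscr{C}}(\pi\Lambda, X)$ is a finite-dimensional $\widetilde{\Lambda}$-module. By Theorem~\ref{theorem.imgr} it is enough to show that $\mathtt{M} X$ is gradable, and by Corollary~\ref{corollary.rigid-image} this follows once we know that $\mathtt{M} X$ is rigid, i.e.\ $\Ext^1_{\widetilde{\Lambda}}(\mathtt{M} X, \mathtt{M} X) = 0$. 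Thus everything reduces to transporting rigidity along $\mathtt{M}$: I must show that a rigid object of $\mathscr{C}_{\Lambda}$ is sent by $\mathtt{M}$ to a rigid $\widetilde{\Lambda}$-module.

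To do this, write $T = \pi\Lambda$, a cluster tilting object of the $2$-CY category $\mathscr{C}_{\Lambda}$ by Theorem~\ref{theorem.is2cy}. First I would build a presentation triangle
\[ T_1 \longrightarrow T_0 \longrightarrow X \longrightarrow T_1[1], \qquad T_0, T_1 \in \add T. \]
Take a right $\add T$-approximation $T_0 \rightarrow X$ and complete it to a triangle; applying $\Hom_{\mathscr{C}}(T, -)$, the approximation property makes the connecting map $\Hom_{\mathscr{C}}(T,X) \rightarrow \Hom_{\mathscr{C}}(T,T_1[1])$ vanish, so $\Hom_{\mathscr{C}}(T,T_1[1])$ injects into $\Hom_{\mathscr{C}}(T,T_0[1]) = 0$ (the latter by rigidity of $T$). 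Hence $\Hom_{\mathscr{C}}(T, T_1[1]) = 0$, and $T_1 \in \add T$ by the defining property of a cluster tilting object. Applying $\mathtt{M} = \Hom_{\mathscr{C}}(T, -)$ and using $\mathtt{M}(T_1[1]) = 0$ then yields a projective presentation
\[ \mathtt{M} T_1 \longrightarrow \mathtt{M} T_0 \longrightarrow \mathtt{M} X \longrightarrow 0 \]
of $\mathtt{M} X$ over $\widetilde{\Lambda}$.

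Next I would identify $\Ext^1_{\widetilde{\Lambda}}(\mathtt{M} X, \mathtt{M} X)$ with a subspace of $\Hom_{\mathscr{C}}(X, X[1])$. Applying $\Hom_{\widetilde{\Lambda}}(-, \mathtt{M} X)$ to the presentation and using the Yoneda identification $\Hom_{\widetilde{\Lambda}}(\mathtt{M} T_i, \mathtt{M} X) \iso \Hom_{\mathscr{C}}(T_i, X)$ (valid since $\mathtt{M} T_i$ is projective), one sees that $\Ext^1_{\widetilde{\Lambda}}(\mathtt{M} X, \mathtt{M} X)$ embeds into $\cok[\Hom_{\mathscr{C}}(T_0, X) \rightarrow \Hom_{\mathscr{C}}(T_1, X)]$. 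On the other hand, applying $\Hom_{\mathscr{C}}(-, X)$ to the presentation triangle identifies this cokernel with $\Ker[\Hom_{\mathscr{C}}(X, X[1]) \rightarrow \Hom_{\mathscr{C}}(T_0, X[1])]$, a subspace of $\Ext^1_{\mathscr{C}}(X, X) = \Hom_{\mathscr{C}}(X, X[1])$. Since $X$ is rigid this is zero, hence $\mathtt{M} X$ is rigid, therefore gradable, and finally $X \in \Im\pi$.

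The routine ingredients here are the two long exact sequences and the Yoneda identification. The one point that needs care — and which I expect to be the main obstacle — is the first embedding: the chosen presentation $\mathtt{M} T_1 \rightarrow \mathtt{M} T_0 \rightarrow \mathtt{M} X \rightarrow 0$ need not be minimal, so computing with its cokernel recovers $\Ext^1_{\widetilde{\Lambda}}(\mathtt{M} X, \mathtt{M} X)$ only up to the discrepancy between $\mathtt{M} T_1$ and the genuine first syzygy of $\mathtt{M} X$. A short diagram chase shows that $\Ext^1$ nonetheless embeds into this cokernel, which is all that is required. Equivalently, this comparison of self-extensions in $\mathscr{C}_{\Lambda}$ and over $\widetilde{\Lambda}$ is the standard ingredient from the theory of $2$-Calabi--Yau tilted algebras, and could be quoted rather than reproved.
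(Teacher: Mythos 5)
Your proposal is correct and takes essentially the same route as the paper: reduce via Theorem~\ref{theorem.imgr} to gradability of $\mathtt{M} X$, which by Corollary~\ref{corollary.rigid-image} follows once $\mathtt{M} X$ is rigid. The only difference is that the paper simply cites \cite[Theorem~4.9]{KoenigZhu} for the transfer of rigidity from $X$ to $\mathtt{M} X$, whereas you prove this ingredient directly (correctly, including the care needed for the non-minimal presentation) --- and, as you yourself note, it can indeed just be quoted.
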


\begin{proof}
By \cite[Theorem~4.9]{KoenigZhu}, if $X$ is rigid then so is $\mathtt{M} X$.
By Corollary~\ref{corollary.rigid-image} any rigid module in $\mod \widetilde{\Lambda}$ is gradable, so in particular $\mathtt{M} X$ is gradable. Hence, by Theorem~\ref{theorem.imgr}, $X$ lies in the image of $\pi$.
\end{proof}

\begin{corollary}
Any cluster tilting object in $\mathscr{C}_{\Lambda}$ is the image of some object in $\mathscr{D}_{\Lambda}$.
\end{corollary}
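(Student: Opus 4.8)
The plan is to observe that this corollary is an immediate consequence of the preceding theorem, once one notes that every cluster tilting object is rigid. Recall that by definition a cluster tilting object $T \in \mathscr{C}_{\Lambda}$ satisfies
\[ \add T = \{ X \in \mathscr{C}_{\Lambda} \mid \Hom_{\mathscr{C}_{\Lambda}}(T, X[1]) = 0 \}. \]
In particular $T$ itself lies in $\add T$, so $\Hom_{\mathscr{C}_{\Lambda}}(T, T[1]) = 0$. Since in a triangulated category one has $\Ext^1(T,T) = \Hom_{\mathscr{C}_{\Lambda}}(T, T[1])$, this says precisely that $T$ is rigid in the sense of the current subsection.

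Having established rigidity, I would simply invoke the preceding theorem, which asserts that any rigid object $X \in \mathscr{C}_{\Lambda}$ lies in the image of $\pi$. Applying it to $X = T$ yields $T \in \Im \pi$, that is, $T$ is the image of some object of $\mathscr{D}_{\Lambda}$, which is the assertion.

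I do not expect any genuine obstacle here: the substantive content sits entirely in the chain of results leading up to the theorem on rigid objects, namely the identification in Theorem~\ref{theorem.imgr} of $\Im \pi$ with the objects whose $\widetilde{\Lambda}$-module is gradable, the gradability of rigid modules established in Corollary~\ref{corollary.rigid-image}, and the preservation of rigidity under $\mathtt{M}$. The only thing one must verify for this corollary is the elementary translation between the cluster-tilting condition and rigidity, which is immediate from the definitions above.
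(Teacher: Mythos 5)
Your proof is correct and matches the paper's (implicit) argument exactly: the corollary is stated without proof precisely because, as you observe, $T \in \add T$ forces $\Hom_{\mathscr{C}_{\Lambda}}(T, T[1]) = 0$, so cluster tilting objects are rigid and the preceding theorem applies directly. Nothing is missing.
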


\section{Fractional Calabi-Yau type situations}

In this section, we study the situation that there is an object in $\mathscr{D}_{\Lambda}$ satisfying a fractional Calabi-Yau type condition, that is an $X$ such that $X[a] \iso \mathbb{S}^b X$ for certain $a, b \in \mathbb{Z}$. We then show the following:

\begin{theorem} \label{theorem.fracCY}
Let $\Lambda$ be a connected $\tau_2$-finite algebra. Assume that there is some indecomposable object $X \in \mathscr{D}_{\Lambda}$ such that $X[a] \iso \mathbb{S}^b X$ for some $a, b \in \mathbb{Z}$ with $a \neq b$.

Then the functor $\pi \colon \mathscr{D}_{\Lambda} \to \mathscr{C}_{\Lambda}$ is dense if and only if $\Lambda$ is piecewise hereditary.
\end{theorem}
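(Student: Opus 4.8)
The plan is to prove both implications through the gradability criterion of Theorem~\ref{theorem.imgr}, which says that $\pi$ is dense if and only if every $\widetilde{\Lambda}$-module is gradable, i.e. $\mod\bl\widetilde{\Lambda} = \mod\widetilde{\Lambda}$. The ``if'' direction (piecewise hereditary $\then$ dense) is not new: it is exactly the content of Keller's theorem on orbit categories of derived categories of hereditary categories, cited in the introduction, so I would simply invoke it. All the work lies in the ``only if'' direction, where I assume $\pi$ is dense --- equivalently, that all $\widetilde{\Lambda}$-modules are gradable --- and must deduce that $\Lambda$ is piecewise hereditary. The hypothesis of a fractional Calabi--Yau object is what I would exploit to pin down the structure of $\mathscr{D}_{\Lambda}$.

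First I would unwind the fractional Calabi--Yau relation in the cluster category. Writing $\mathbb{S} = \mathbb{S}_2[2]$ and using that $\pi \mathbb{S}_2 \cong \pi$, the isomorphism $X[a] \iso \mathbb{S}^b X$ in $\mathscr{D}_{\Lambda}$ pushes down to $(\pi X)[a-2b] \iso \pi X$ in $\mathscr{C}_{\Lambda}$. Since $\mathscr{C}_{\Lambda}$ is $2$-Calabi--Yau its Auslander--Reiten translation is $\tau = [1]$, so this says that $\pi X$ is a $\tau$-periodic object, and hence (via the Auslander--Reiten structure) that it sits in a periodic, tube-like component. The degenerate case $a = 2b$ I would treat separately: there the same computation gives $X \iso \mathbb{S}_2^b X$ already in $\mathscr{D}_{\Lambda}$, so the grading on $\mathtt{M}\pi X = \bigoplus_i \Hom_{\mathscr{D}}(\Lambda, \mathbb{S}_2^{-i} X)$ is itself periodic of period $b$. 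In either case I obtain a genuine periodicity governed by the $\mathbb{Z}$-action that defines the grading of $\widetilde{\Lambda}$.

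The heart of the argument, and the step I expect to be hardest, is to convert ``$\pi$ dense $+$ a fractional Calabi--Yau object with $a \neq b$'' into piecewise hereditariness. My approach would be contrapositive: assuming $\Lambda$ is \emph{not} piecewise hereditary, I would produce a non-gradable $\widetilde{\Lambda}$-module, contradicting density via Theorem~\ref{theorem.imgr}. Concretely, I would use the periodic family around $\pi X$ to build a module $M$ over $\widetilde{\Lambda}$ whose $\mathbb{Z}$-grading wraps around the periodicity with nontrivial monodromy; by Theorem~\ref{theorem.param-ungr} such an $M$ is non-gradable precisely when its twists $M_{\alpha}$ are pairwise non-isomorphic for generic $\alpha$. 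Here the hypothesis $a \neq b$ (Calabi--Yau dimension $\neq 1$) is exactly what should guarantee that the twist $\sigma_{\alpha}$ acts nontrivially on this family, so that the monodromy cannot be trivialised; when $a = b$ the object behaves like a $1$-Calabi--Yau object and every such module would remain gradable, which is why that case is excluded. Connectedness of $\Lambda$ enters to propagate the fractional Calabi--Yau behaviour from the single object $X$ across the connected Auslander--Reiten quiver, so that the periodic structure constrains all of $\mathscr{D}_{\Lambda}$ rather than one isolated component.

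Finally I would assemble the two directions: the gradability reduction together with Keller's theorem gives ``if'', and the non-gradable-module construction gives ``only if''. The main obstacle, as indicated, is the precise construction of the non-gradable module and the verification that failure of piecewise hereditariness is \emph{exactly} the obstruction to trivialising its monodromy. This is where I would expect to need either an explicit twist computation on the periodic family, or a structural classification of $\tau_2$-finite algebras admitting a fractional Calabi--Yau object that isolates the piecewise hereditary ones.
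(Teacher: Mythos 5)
Your top-level reduction is the same as the paper's --- translate density of $\pi$ into gradability of all $\widetilde{\Lambda}$-modules via Theorem~\ref{theorem.imgr}, quote Keller for the ``if'' direction, and argue the ``only if'' direction contrapositively by producing a non-gradable module --- but there is a genuine gap exactly where you admit one: the non-gradable module is never constructed. Theorem~\ref{theorem.param-ungr} is a dichotomy, not a production device: to use it you must already exhibit a finite-dimensional module $M$ and prove its twists $M_{\alpha}$ are pairwise non-isomorphic, which is \emph{equivalent} to non-gradability of $M$, so the ``nontrivial monodromy'' plan is circular as stated. Worse, your sketch never actually engages the hypothesis that $\Lambda$ is not piecewise hereditary. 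This is not a cosmetic omission: for $\Lambda$ hereditary of Dynkin type every indecomposable satisfies $X[a] \iso \mathbb{S}^b X$ with $a < b$, the objects $\pi X$ are shift-periodic in $\mathscr{C}_{\Lambda}$ exactly as in your second paragraph, and yet $\pi$ \emph{is} dense. So any argument along your lines must locate a mechanism that distinguishes the Dynkin/piecewise hereditary case, and your proposal offers none. Your reading of connectedness is also off: the Calabi--Yau relation is not ``propagated across the connected Auslander--Reiten quiver'' (it holds only on the component $\mathscr{A}_X$ of $X$, Observation~\ref{obs.cy_on_comp}); connectedness is used for a different purpose, explained below.

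The paper fills the hole as follows. Rewrite the hypothesis as $\tau^b Y \iso Y[a-b]$ for all $Y \in \mathscr{A}_X$; since $a - b \neq 0$ and $\gld \Lambda < \infty$, this forces $\Hom_{\mathscr{D}_{\Lambda}}(Y, \tau^{-i}Y) = 0$ for $i \gg 0$, and a mesh-category analysis (Lemma~\ref{lemma.mesh_finite}, ruling out $A_{\infty}^{\infty}$ and $D_{\infty}$) shows $\mathscr{A}_X \iso \mathbb{Z}Q$ with $Q$ Dynkin or of type $A_{\infty}$ (Proposition~\ref{prop.AR_shapes}). If $Q$ is Dynkin, then the infinite radical vanishes on $\mathscr{A}_X$, so $\mathscr{A}_X$ receives and emits no morphisms from other components; \emph{this} is where connectedness of $\Lambda$ enters, yielding $\mathscr{D}_{\Lambda} \simeq \mathscr{D}_{kQ}$, i.e.\ $\Lambda$ piecewise hereditary (Lemma~\ref{lemma.non-Dynkin}). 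Otherwise $\mathscr{A}_X = \mathbb{Z}A_{\infty}$, and one forms the limit graded module $\mathtt{M}\pi\mathscr{A}_X$ (Construction~\ref{const.limit}): it has finite-dimensional graded pieces, no finite-dimensional direct summands (Lemma~\ref{lemma.no_finite_summands}), and is periodic under the degree shift $\left<a-b\right>$ (Lemma~\ref{lemma.periodic} --- note the relevant exponent throughout is $a-b$, not your $a-2b$; your separate treatment of $a=2b$ is a non-issue, and indeed $X \iso \mathbb{S}_2^b X$ with $b \neq 0$ would force $\mathtt{M}\pi X = 0$ for a $\tau_2$-finite algebra). The theorem of Dowbor--Skowro{\'n}ski (Theorem~\ref{theo.DowborSkowronski}) then supplies a finite-dimensional non-gradable $\widetilde{\Lambda}$-module, and Theorem~\ref{theorem.imgr} kills density. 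In short: the Auslander--Reiten component dichotomy is the missing mechanism by which ``not piecewise hereditary'' is used, and the Dowbor--Skowro{\'n}ski theorem, not Theorem~\ref{theorem.param-ungr}, is the engine producing the non-gradable module.
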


\begin{remark}
\begin{itemize}
\item If there is a functorial isomorphism $X[a] \iso \mathbb{S}^b X$ for all $X \in D^b(\mod \Lambda)$ then $\Lambda$ is called \emph{fractionally Calabi-Yau} of dimension $\frac{a}{b}$.
\item The condition $X[a] \iso \mathbb{S}^b X$ means that in the cluster category we have $\tau^a \pi X \iso \pi X[a] \iso \mathbb{S}^b \pi X \iso \tau^{2b} \pi X$.
\item If $\Lambda$ is piecewise hereditary, then there can only be indecomposable objects $X$ with $X[a] \iso \mathbb{S}^b X$ for $a \leq b$ (and for $a < b$ the algebra $\Lambda$ is of Dynkin type, while for $a = b$ it is of Euclidean or tubular type).
\item For algebras which are not piecewise hereditary there may be indecomposable objects $X$ satisfying $X[a] \iso \mathbb{S}^b X$ for $a > b$ or $a < b$. For instance for the algebra given by the quiver
\[ \begin{tikzpicture}
 \node (A) at (0,0) {1};
 \node (B) at (1,1) {2};
 \node (C) at (2,0) {3};
 \draw [->] (A) -- node [above left=-5pt] {$\scriptstyle \alpha$} (B);
 \draw [->] (B) -- node [above right=-5pt] {$\scriptstyle \beta$} (C);
 \draw [->] (C) -- node [below] {$\scriptstyle \gamma$} (A);
\end{tikzpicture} \]
subject to the relation $\alpha \beta$. Then the projective module $P_2$ corresponding to vertex $2$ satisfies $P_2 \iso \mathbb{S} P_2$, while the simple module $S_1$ corresponding to vertex $1$ satisfies $S_1[3] \iso \mathbb{S}^2 S_1$.
\end{itemize}
\end{remark}

%We will actually show the following more precise version of Theorem~\ref{theorem.fracCY}:

%\begin{proposition}
%In the setup of Theorem~\ref{theorem.fracCY} we have the following:
%\begin{enumerate}
%\item If $a > b$ then $\Lambda$ is piecewise hereditary, and the functor $\pi \colon \mathscr{D}_{\Lambda} \to \mathscr{C}_{\Lambda}$ is dense.
%\item If $a < b$ then $\Lambda$ is not piecewise hereditary, and the functor $\pi \colon \mathscr{D}_{\Lambda} \to \mathscr{C}_{\Lambda}$ is not dense.
%\end{enumerate}
%\end{proposition}

\subsection{The shape of the Auslander-Reiten component}

In this subsection we study the shape of the Auslander-Reiten component of $X$ in case the assumptions of Theorem~\ref{theorem.fracCY} are satisfied. We denote this component by $\mathscr{A}_X$.

We begin with two immediate observations.

\begin{observation}[{\cite[Struktursatz~1.5]{Riedtmann}}]
The Auslander-Reiten component $\mathscr{A}_X$ is of the form $\mathbb{Z}Q / G$ for some quiver $Q$ and a group $G$ of automorphisms of $\mathbb{Z}Q$. (Actually this is true for any Auslander-Reiten component.)
\end{observation}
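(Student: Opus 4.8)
The plan is to invoke Riedtmann's structure theory of stable translation quivers, whose proof proceeds through universal covers. The only input from the ambient category is that $\mathscr{A}_X$ is a \emph{stable translation quiver}: its vertices are the isomorphism classes of indecomposables in the component, its arrows (with their valuations) record irreducible maps, and its translation is the Auslander--Reiten translation $\tau$. First I would note that since $\mathscr{C}_\Lambda$ is $2$-Calabi--Yau it has Auslander--Reiten triangles everywhere and $\tau$ is an autoequivalence, so $\tau$ is defined on all of $\mathscr{A}_X$ and the mesh relations coming from the Auslander--Reiten triangles hold. This is exactly what makes $\mathscr{A}_X$ \emph{stable}, and since nothing else about the category is used, it also explains the parenthetical remark that the conclusion holds for any Auslander--Reiten component.

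Next I would construct the universal cover $p\colon \widetilde{\mathscr{A}} \to \mathscr{A}_X$, imitating topological covering-space theory. Fixing a base vertex, one declares two walks in $\mathscr{A}_X$ homotopic when they differ by elementary moves across meshes, and takes the vertices of $\widetilde{\mathscr{A}}$ to be homotopy classes of walks (arrows and translation being inherited). This produces a connected, simply connected covering translation quiver together with a free action of the fundamental group $G = \pi_1(\mathscr{A}_X)$ by automorphisms of $\widetilde{\mathscr{A}}$, for which $\mathscr{A}_X \cong \widetilde{\mathscr{A}}/G$.

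The heart of the argument, and the step I expect to be the main obstacle, is to identify the simply connected cover: one must show that a connected, simply connected stable translation quiver is isomorphic to $\mathbb{Z}T$ for a tree $T$. Here one fixes a vertex and uses $\tau$ together with the mesh structure to define a $\mathbb{Z}$-valued ``level'' function. Simple-connectedness is precisely what rules out monodromy around loops and so guarantees this function is well defined; on the universal cover $\tau$ then acts freely with $\mathbb{Z}$-orbits, and one may choose a section meeting each orbit once. One checks that simple-connectedness forces this section to be a tree $T$ and that reassembling the sections along the $\tau$-direction reproduces the repetition quiver $\mathbb{Z}T$.

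Combining the two steps gives $\mathscr{A}_X \cong \widetilde{\mathscr{A}}/G = \mathbb{Z}T/G$, and taking $Q = T$ yields the claim. I would note that the result could equally be stated with $Q$ any orientation of the underlying graph of $T$, since $\mathbb{Z}Q$ depends only on that graph up to isomorphism of translation quivers; this is what allows the statement to be phrased for ``some quiver $Q$'' rather than specifically for a tree.
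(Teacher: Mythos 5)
Your proposal is, in outline, precisely the argument behind the paper's citation: the paper gives no proof of this observation at all, but simply invokes Riedtmann's Struktursatz~1.5, whose proof is the covering-theoretic one you sketch (the component is a stable translation quiver; build the universal cover from homotopy classes of walks modulo elementary moves across meshes; identify a connected, simply connected stable translation quiver with $\mathbb{Z}T$ for a tree $T$; descend by the fundamental group). So your route is the intended one. One inaccuracy in your setup, though: $\mathscr{A}_X$ is a component of the Auslander--Reiten quiver of the \emph{derived} category $\mathscr{D}_{\Lambda}$, not of the cluster category $\mathscr{C}_{\Lambda}$ --- in this section $X$ is an indecomposable object of $\mathscr{D}_{\Lambda}$ with $X[a] \iso \mathbb{S}^b X$, and the subsequent statements (e.g.\ $\Hom_{\mathscr{D}_{\Lambda}}(Y, \tau^{-i}Y) = 0$ in the proof of Proposition~\ref{prop.AR_shapes}) are all formulated in $\mathscr{D}_{\Lambda}$. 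Hence you should not ground the stability of the translation quiver in the $2$-Calabi--Yau property of $\mathscr{C}_{\Lambda}$; the correct justification is that $\gld \Lambda \leq 2 < \infty$, so $\mathscr{D}_{\Lambda}$ has a Serre functor $\mathbb{S} = D\Lambda \otimes^L_{\Lambda} -$, hence Auslander--Reiten triangles throughout, with $\tau = \mathbb{S}[-1]$ an autoequivalence; consequently every component is a stable translation quiver and Riedtmann's theorem applies (which is also what the paper's parenthetical remark records). With that substitution your argument is correct and matches the cited proof.
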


\begin{observation} \label{obs.cy_on_comp}
For any $Y \in \mathscr{A}_X$ we have $Y[a] \iso \mathbb{S}^b Y$.
\end{observation}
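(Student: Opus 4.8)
The plan is to recognize the relation $Y[a]\cong\mathbb{S}^bY$ as the assertion that a single triangle autoequivalence of $\mathscr{D}_{\Lambda}$ fixes $Y$, and then to propagate this ``fixing'' from $X$ to the whole component using the compatibility of autoequivalences with the Auslander--Reiten structure. First I would set $F:=[a]\circ\mathbb{S}^{-b}$. Both the shift and the Serre functor are triangle autoequivalences, hence so is $F$, and since $\mathbb{S}$ and $[1]$ commute I may rewrite $F=\tau^{-b}\circ[a-b]$ using the formula $\tau=\mathbb{S}\circ[-1]$ for the Auslander--Reiten translation. The hypothesis $X[a]\cong\mathbb{S}^bX$ is exactly $FX\cong X$, and the conclusion for $Y$ is exactly $FY\cong Y$; the point of writing $F$ this way is that it manifestly commutes with $\tau$.

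Next I would use that $F$, being a triangle autoequivalence, commutes with $\tau$ and sends Auslander--Reiten triangles to Auslander--Reiten triangles. Therefore $F$ permutes the Auslander--Reiten components, and since $FX\cong X$ it restricts to an automorphism $\bar F$ of the translation quiver $\mathscr{A}_X$ fixing the vertex $X$. To spread the relation, I would induct along irreducible maps: if $FY\cong Y$ and $Y\to E\to\tau^{-1}Y\to Y[1]$ is the Auslander--Reiten triangle starting at $Y$, then applying $F$ and using $F\tau^{-1}Y\cong\tau^{-1}FY\cong\tau^{-1}Y$ together with uniqueness of Auslander--Reiten triangles forces $FE\cong E$. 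As $\mathscr{A}_X$ is connected, this should carry $FY\cong Y$ from $X$ to every object of $\mathscr{A}_X$.

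The hard part will be the final step: $FE\cong E$ only says that $\bar F$ permutes the indecomposable summands of $E$, i.e. the immediate successors of $Y$, and a priori this permutation could be nontrivial, so that $\bar F$ acts as a reflection of the component rather than as the identity. This is a genuine obstacle, since $\tau$-commuting automorphisms fixing a vertex need not be trivial; for instance a component of type $\mathbb{Z}A_\infty^\infty$ (which does occur for non-piecewise-hereditary algebras) admits a reflection commuting with $\tau$ and fixing a central vertex. To close this gap I would exploit that $F$ commutes with $[1]$ as well as with $\tau$, so $F$ in fact fixes the entire $\langle\tau,[1]\rangle$-orbit of $X$ inside $\mathscr{A}_X$. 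The plan is then to show, using Riedtmann's description $\mathscr{A}_X=\mathbb{Z}Q/G$, that this orbit is too spread out to lie on a single reflection axis, so $\bar F$ cannot be a nontrivial symmetry and must be the identity; equivalently, that $[a-b]$ and $\tau^b$ induce the same automorphism of $\mathscr{A}_X$ rather than differing by a flip. Establishing this translation-versus-reflection dichotomy is the crux of the argument.
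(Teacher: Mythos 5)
Your reduction to the autoequivalence $F=[a]\circ\mathbb{S}^{-b}=\tau^{-b}\circ[a-b]$ is exactly the natural move (the paper itself offers no argument at all here, filing the statement under ``two immediate observations''), and the obstacle you flag at the end is not a technical inconvenience you failed to dispatch --- it is fatal, and your proposed repair cannot succeed, because the reflection scenario genuinely occurs under precisely the hypotheses in force. Take $\Lambda=kA_3$ with linear orientation: it is connected and hereditary, hence $\tau_2$-finite, and its derived category has a single Auslander--Reiten component $\mathbb{Z}A_3$. For the middle-row object $X=P_2$ one computes $\tau P_2=I_2[-1]$ and $\tau I_2=P_2$, so $\tau^2 P_2\iso P_2[-1]$, i.e.\ $X[1]\iso\mathbb{S}^2X$; thus the standing assumption holds with $(a,b)=(1,2)$, $a\neq b$. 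But for $Y=P_1$ in the same component one has $\tau P_1=S_1[-1]$ and $\tau S_1=S_2$, so $\tau^2P_1\iso S_2[-1]\not\iso P_1[-1]$, i.e.\ $Y[1]\not\iso\mathbb{S}^2Y$. Indeed here $F=\mathbb{S}^{-2}[1]$ acts on $\mathbb{Z}A_3$ as the pure reflection fixing the middle $\tau$-orbit pointwise and exchanging the two outer rows (recall that $[1]$ acts on $\mathbb{Z}A_n$ as a glide reflection). This also kills your closing strategy: since $X[a-b]\iso\tau^bX$, the shift adds nothing to the $\tau$-orbit, and the $\langle\tau,[1]\rangle$-orbit of $X$ is exactly the middle row --- it lies entirely on the reflection axis rather than being ``too spread out'' to do so. So the observation is false as stated, and no completion of your plan (or anyone's) can prove it in this generality; your instinct that translation-versus-reflection is the crux was correct, but the dichotomy resolves the wrong way.

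What survives, and what the paper actually needs, is the following. The valid portion of your argument shows that $F$ restricts to a $\tau$-commuting automorphism of $\mathscr{A}_X$ fixing the $\tau$-orbit of $X$ pointwise (since $F\tau^nX\iso\tau^nFX\iso\tau^nX$). If one already knows $\mathscr{A}_X\iso\mathbb{Z}A_\infty$, then your propagation closes completely: $A_\infty$ has no diagram automorphism, so the automorphism group of the stable translation quiver $\mathbb{Z}A_\infty$ is generated by $\tau$, and an automorphism fixing a vertex is the identity; hence $FY\iso Y$ for all $Y\in\mathscr{A}_X$. This covers every later essential use of the observation in the paper (Proposition~\ref{prop.limit} and Lemma~\ref{lemma.periodic} work in the $\mathbb{Z}A_\infty$ case). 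The remaining consumer is the proof of Proposition~\ref{prop.AR_shapes}, which invokes the observation before the shape is known; it can be repaired using only the pointwise-fixed $\tau$-orbit: for $Y$ in that orbit, $\tau^{\ell b}Y\iso Y[\ell(a-b)]$, so $\Hom_{\mathscr{D}_\Lambda}(Y,\tau^{-\ell b}Y)=0$ for $|\ell|\gg0$, and this vanishing along an arithmetic progression still suffices (with a little extra care) for the mesh-category dichotomy of Lemma~\ref{lemma.mesh_finite} and for $G=1$. In short: your proposal correctly diagnoses a gap that the paper glosses over, but its final step is unprovable as stated; the honest fix is to weaken the observation to the $\tau$-orbit of $X$, or to restrict it to the $\mathbb{Z}A_\infty$ case, and to adjust the proof of Proposition~\ref{prop.AR_shapes} accordingly.
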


\begin{proposition} \label{prop.AR_shapes}
The Auslander-Reiten component $\mathscr{A}_X$ is of the form $\mathbb{Z}Q$, with $Q$ Dynkin or of type $A_{\infty}$.
\end{proposition}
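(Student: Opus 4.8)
The plan is to combine Riedtmann's structure theorem with the characterisation, due to Happel, Preiser and Ringel, of the tree classes of stable translation quivers that carry a subadditive function. First I would record the translation-quiver data. Since $\gld \Lambda \leq 2$, the category $\mathscr{D}_{\Lambda}$ has Auslander--Reiten triangles and its Auslander--Reiten translate is $\tau = \mathbb{S}[-1]$. As $\tau^b = \mathbb{S}^b[-b]$, Observation~\ref{obs.cy_on_comp} gives, for every $Y \in \mathscr{A}_X$,
\[ \tau^b Y = \mathbb{S}^b[-b]\,Y \cong Y[a-b]. \]
Write $c = a - b$, so that $c \neq 0$ by hypothesis. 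The first consequence I would draw is that $\mathscr{A}_X$ has no $\tau$-periodic vertex: if $\tau^n Y \cong Y$ then $Y \cong \tau^{nb} Y \cong Y[nc]$ with $nc \neq 0$, which is impossible for a nonzero bounded complex, since a nonzero shift moves its (bounded, nonzero) cohomology. By Riedtmann's theorem (the observation above) $\mathscr{A}_X \cong \mathbb{Z}Q/G$, and the absence of $\tau$-periodic vertices means $G$ contains no nontrivial power of $\tau$. As any nontrivial admissible automorphism group of $\mathbb{Z}Q$ for a tree $Q$ eventually produces such a translation power (a power of a glide is a pure translation), this already forces $G = 1$ within the tree classes that survive below, and it eliminates outright all tubes and all finite self-injective-type components.

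Next I would produce a subadditive function. Setting $d(Y) = \dim_k Y = \sum_i \dim_k H^i(Y)$, the long exact cohomology sequence of an Auslander--Reiten triangle $\tau Y \to E \to Y \to (\tau Y)[1]$ gives $d(E) \leq d(\tau Y) + d(Y)$, so $d$ is subadditive on $\mathscr{A}_X$; it is moreover shift-invariant, hence $\tau^b$-invariant by the displayed relation. By the theorem of Happel, Preiser and Ringel, a connected stable translation quiver admitting a positive subadditive function has tree class among the Dynkin, Euclidean and infinite Dynkin diagrams. Since $k$ is algebraically closed all valuations are trivial, so only the simply laced diagrams occur; this removes $B_\infty$ and $C_\infty$ (and the non-simply-laced Euclidean and Dynkin types). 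What remains to be excluded are the \emph{tame} simply laced classes, namely the Euclidean diagrams, $A_\infty^\infty$ and $D_\infty$ — precisely those on which every positive subadditive function is forced to be additive.

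The heart of the argument — and the step I expect to be the main obstacle — is ruling out these tame tree classes, and this is exactly where the hypothesis $a \neq b$, i.e. $c \neq 0$, must be used rather than the translation-quiver combinatorics alone. On such a component the (additive) function is bounded and $\tau$-invariant, which is perfectly compatible with the $\tau^b$-invariance of $d$; note also that these classes are \emph{not} excluded by the absence of $\tau$-periodicity, since for instance $\mathbb{Z}\tilde{\Delta}$ has no $\tau$-periodic vertex. The obstruction must instead come from the interaction of $\tau^b \cong [c]$ with the standard $t$-structure on $\mathscr{D}_{\Lambda}$. Concretely, I would track the cohomological support of the objects along the component, that is the top and bottom nonzero cohomological degrees: the suspension $[c]$ shifts this band by $c \neq 0$, whereas the mesh relations of a tame component (where the associated additive function is bounded) confine the band to a fixed finite width and do not allow $\tau$ and the suspension to become proportional; this yields a contradiction. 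This is the analogue, internal to the proof, of the fact recorded in the remark that a piecewise hereditary algebra of Euclidean or tubular (Calabi--Yau dimension $1$) type only admits the relation with $a = b$. Once the tame classes are excluded, a positive subadditive function survives on the component, so by Happel--Preiser--Ringel $Q$ must be a finite Dynkin diagram (the finite case, where $\tau^b$-invariance together with finitely many $\tau$-orbits bounds $d$) or $Q = A_\infty$; combined with $G = 1$ from the first step, this is the assertion.
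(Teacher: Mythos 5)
Your overall frame (Riedtmann's structure theorem, the reformulation $\tau^b Y \cong Y[a-b]$ with $c = a-b \neq 0$, a classification of the possible tree classes, then exclusion of the tame ones) matches the architecture of the paper's proof, and several of your steps are sound: the absence of $\tau$-periodic vertices, the subadditivity of total cohomology dimension on Auslander--Reiten triangles, and the Happel--Preiser--Ringel reduction to Dynkin, Euclidean, $A_\infty$, $A_\infty^\infty$, $B_\infty$, $C_\infty$, $D_\infty$ (HPR is a legitimate substitute for the first half of the paper's Lemma~\ref{lemma.mesh_finite}, which instead argues via finite non-Dynkin subquivers). But the step you yourself flag as the main obstacle --- excluding the Euclidean, $A_\infty^\infty$ and $D_\infty$ classes --- is a genuine gap, and the mechanism you propose does not close it. The relation $\tau^b Y \cong Y[c]$ says only that after $b$ translation steps the cohomological support band moves by $c$; a band of bounded width drifting along the $\tau$-orbits is perfectly consistent with the mesh combinatorics, and indeed this drifting is exactly what happens in the $\mathbb{Z}A_\infty$ components that the proposition's conclusion allows. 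So ``bounded band width'' versus ``the suspension shifts the band'' yields no contradiction, and your phrase that the mesh relations ``do not allow $\tau$ and the suspension to become proportional'' is precisely the assertion that needs proof, not an available fact. No purely numerical argument with subadditive or bounded additive functions can succeed here, since bounded positive additive (null-root type) functions genuinely exist on the tame translation quivers --- e.g.\ on the components occurring for tame hereditary algebras, where the fractional Calabi--Yau relation holds with $a = b$.

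What actually kills the tame classes in the paper is a hom-vanishing argument: from $\tau^{-b} Y \cong Y[b-a]$ and $\gld \Lambda < \infty$ one gets $\Hom_{\mathscr{D}_\Lambda}(Y, \tau^{-i} Y) = 0$ for all $i \gg 0$ (split $i$ into residues modulo $b$; within each residue class the shift tends to infinity, and bounded complexes over an algebra of finite global dimension admit no morphisms between distant shifts); this vanishing descends to the mesh category of $\mathbb{Z}Q$, and Lemma~\ref{lemma.mesh_finite} shows by direct computation that the mesh categories of $\mathbb{Z}\widetilde{\Delta}$ (Euclidean is already excluded as a finite non-Dynkin quiver), $\mathbb{Z}A_\infty^\infty$ and $\mathbb{Z}D_\infty$ have $\Hom(Y, \tau^{-i} Y) \neq 0$ for arbitrarily large $i$. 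The same vanishing also gives $G = 1$ immediately (a nontrivial admissible automorphism has a power that is a nontrivial power of $\tau$, identifying $Y$ with $\tau^{-i} Y$ for arbitrarily large $i$), whereas your argument for $G = 1$ is likewise left informal. To repair your write-up you would have to import this hom-vanishing computation --- at which point you have essentially reconstructed the paper's proof and the HPR detour becomes redundant.
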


For the proof we need the following observation.

\begin{lemma} \label{lemma.mesh_finite}
Let $Q$ be a connected quiver. In the mesh category of $\mathbb{Z} Q$ we have
\[ \Hom(Y, \tau^{-i} Y) = 0 \quad \forall Y \; \forall i \gg 0 \]
if and only if $Q$ is either a Dynkin quiver or of type $A_{\infty}$.
\end{lemma}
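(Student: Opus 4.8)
\emph{Strategy and the ``if'' direction.} The plan is to study, for a fixed vertex $Y$ of $\mathbb{Z}Q$, the \emph{hammock} function $h_Y(Z)=\dim\Hom(Y,Z)$ on the mesh category, and to track its values along the single $\tau$-orbit of $Y$, i.e.\ the numbers $h_Y(\tau^{-i}Y)=\dim\Hom(Y,\tau^{-i}Y)$. Since $\mathbb{Z}Q$ and its mesh category depend only on the underlying graph of $Q$, I may replace $Q$ by this graph. For the ``if'' direction, when $Q$ is Dynkin the mesh category of $\mathbb{Z}Q$ is the category of indecomposables of the bounded derived category $\mathscr{D}_{kQ}$ of the representation-finite hereditary algebra $kQ$; for a fixed indecomposable $Y$ the set $\{Z\mid\Hom(Y,Z)\neq0\}$ is then finite (writing objects as shifts of indecomposable modules, only $\Hom$ and $\Ext^1$ contribute, and each involves finitely many modules), and since the $\tau$-orbit of $Y$ is infinite it meets this finite hammock only finitely often. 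When $Q=A_\infty$ the derived-category model is no longer available, so I would instead solve the mesh relations directly; the computation shows that the support of $h_Y$ meets each $\tau$-orbit in a finite set, its intersection with the orbit of $Y$ being a finite interval determined by the distance of $Y$ from the boundary ray. Either way $h_Y(\tau^{-i}Y)=0$ for $i\gg0$.

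\emph{The ``only if'' direction.} Here I argue that the vanishing forces $Q$ to be Dynkin or $A_\infty$. Assume $h_Y(\tau^{-i}Y)=0$ for $i\gg0$, for every $Y$. The first step is to upgrade this to per-orbit finiteness: for a fixed $Y_0=(q_0,0)$ and every vertex $q$, the function $n\mapsto\dim\Hom(Y_0,(q,n))$ has finite support. (This needs a short argument passing from the ``same object'' vanishing to ``different objects'', using connectedness of $Q$ and composition of irreducible maps.) Granting this, I set $d(q)=\sum_n\dim\Hom(Y_0,(q,n))$, a finite, positive, integer-valued function on the vertices of $Q$. Summing the mesh relation $\dim\Hom(Y_0,Z)+\dim\Hom(Y_0,\tau Z)=\sum_{Z'\rightarrow Z}\dim\Hom(Y_0,Z')$ over an entire $\tau$-orbit, and accounting for the unit of $\End(Y_0)$, I expect to find that $d$ is \emph{additive} at every vertex except $q_0$, where $2d(q_0)=\sum_{q'\sim q_0}d(q')+1$; so $d$ is subadditive but not additive. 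By the classification of graphs carrying such functions (Vinberg; Happel--Preiser--Ringel), $Q$ is then finite or infinite Dynkin, and among the simply-laced infinite Dynkin diagrams only $A_\infty$ and $D_\infty$ survive: the bi-infinite line $A_\infty^\infty$ and all Euclidean diagrams admit only additive positive functions, while wild graphs admit no positive subadditive function at all.

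\emph{Excluding $D_\infty$, and the main obstacle.} It remains to rule out $D_\infty$, and this is the step I expect to be the genuine obstacle, precisely because it is invisible to the subadditive-function classification, which cannot separate $A_\infty$ from $D_\infty$. I would settle it by an explicit hammock computation in $\mathbb{Z}D_\infty$ for the object $Y$ at the branch vertex: solving the mesh relations layer by layer, the values $\dim\Hom(Y,\tau^{-i}Y)$ stabilize to a nonzero constant for all $i\ge1$, so the vanishing fails on $D_\infty$ and it is incompatible with the standing assumption. This leaves only finite Dynkin and $A_\infty$, as claimed. Thus the two delicate points are the per-orbit finiteness upgrade and this $D_\infty$ computation; the latter is where the asymmetry really lives, reflecting that in $A_\infty$ the hammocks decay along each $\tau$-orbit whereas the extra branch of $D_\infty$ sustains an eventually constant family of morphisms.
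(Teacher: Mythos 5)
Your proposal is correct in substance but takes a genuinely different route for the crucial ``only if'' step. The paper argues: the vanishing hypothesis forbids any finite non-Dynkin subquiver, so $Q$ is Dynkin, $A_\infty$, $A_\infty^\infty$ or $D_\infty$, and then a direct mesh calculation disposes of $A_\infty^\infty$ and $D_\infty$; the converse is dismissed as easy (your derived-category argument for Dynkin and the explicit hammock computation for $A_\infty$ supply exactly what is needed there). You instead obtain the reduction via orbit-sums of hammocks: the function $d(q)=\sum_n \dim\Hom(Y_0,(q,n))$ is positive, subadditive, and strictly non-additive at $q_0$, and Vinberg/Happel--Preiser--Ringel applies. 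This buys something real: the paper's first step is itself not immediate, since restricting the mesh category of $\mathbb{Z}Q$ to $\mathbb{Z}Q'$ for a subquiver $Q'$ does \emph{not} give the mesh category of $\mathbb{Z}Q'$ (meshes acquire extra middle terms), so an argument of your kind is implicitly needed anyway. Your two flagged worries are handled as follows. The per-orbit finiteness upgrade does admit the short argument you anticipate: by $\tau$-equivariance the hypothesis applied to $Y=Y_0$ gives finite support on the orbit of $q_0$ itself, and the hammock inequality $h(Z)+h(\tau Z)\geq \sum_{E\to Z}h(E)$ (exactness of $\Hom(Y_0,-)$ on meshes, which you need for subadditivity in any case) propagates finite support across each edge of $Q$, hence everywhere by connectedness. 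Your one actual misstatement concerns $D_\infty$: the subadditive classification \emph{does} separate $A_\infty$ from $D_\infty$ --- Happel--Preiser--Ringel says a connected graph carrying a positive subadditive non-additive function is finite Dynkin or $A_\infty$, and indeed on $D_\infty$ (as on $A_\infty^\infty$ and the Euclidean graphs) concavity along the infinite ray plus positivity forces every positive subadditive function to be additive. So in your framework the explicit $\mathbb{Z}D_\infty$ hammock computation is redundant; it is, however, precisely the ``straightforward calculation'' the paper performs for $A_\infty^\infty$ and $D_\infty$, so retaining it as a cross-check is harmless, and the eventually constant nonzero values you describe match the nonvanishing the paper asserts.
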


\begin{proof}
Assume the mesh category satisfies $ \Hom(Y, \tau^{-i} Y) = 0$ for any $Y$ and $i$ sufficiently large. Then $Q$ cannot contain any finite subquiver which is not of Dynkin type. Thus $Q$ is either a Dynkin quiver itself, or of one of the types $A_{\infty}$, $A_{\infty}^{\infty}$, or $D_{\infty}$. A straight forward calculation shows that in the latter two cases $\Hom(Y, \tau^{-i} Y) \neq 0$ for arbitrarily large $i$.

Conversely it is easy to see that for $Q$ Dynkin or of type $A_{\infty}$ we have the vanishing property of the lemma.
\end{proof}

\begin{proof}[Proof of Proposition~\ref{prop.AR_shapes}]
By Observation~\ref{obs.cy_on_comp} we have that $\mathscr{A}_X$ is of the form $\mathbb{Z}Q / G$ for some quiver $Q$ and some group $G$ of automorphisms of $\mathbb{Z}Q$.

Note that the assumption $Y[a] \iso \mathbb{S}^b Y$ can be reformulated as $\tau^b Y \iso Y[a-b]$. Recall that we also assumed $a-b \neq 0$. Hence, since $\gld \Lambda < \infty$, for any $Y \in \mathscr{A}_X$ we have $\Hom_{\mathscr{D}_{\Lambda}}(Y, \tau^{-i} Y) = 0$ for $i \gg 0$.

It follows that the same is true in the mesh category of $\mathbb{Z}Q$, and hence, by Lemma~\ref{lemma.mesh_finite}, that $Q$ is Dynkin or of type $A_{\infty}$.

Finally note that $\Hom_{\mathscr{D}_{\Lambda}}(Y, \tau^{-i} Y) = 0$ for $i \gg 0$ also implies that $G = 1$.
\end{proof}

\begin{lemma} \label{lemma.non-Dynkin}
If $\mathscr{A}_X = \mathbb{Z}Q$ with $Q$ Dynkin, then $\Lambda$ is piecewise hereditary of type $Q$.
\end{lemma}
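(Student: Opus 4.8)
The plan is to produce, inside the Dynkin component $\mathscr{A}_X = \mathbb{Z}Q$, a tilting object $T$ whose endomorphism algebra is hereditary of type $Q$, and then to identify $\mathscr{D}_\Lambda$ with the derived category of that algebra via the tilting theorem. Because $Q$ is Dynkin, the translation quiver $\mathbb{Z}Q$ is \emph{standard}: its mesh category is equivalent to the full subcategory of $\mathscr{D}_\Lambda$ on the objects of $\mathscr{A}_X$. I would fix a complete section (slice) $\Sigma \iso Q$ of $\mathbb{Z}Q$ and set $T = \bigoplus_{s \in \Sigma} s$. Since $\Sigma$ is convex and meets each $\tau$-orbit exactly once, no mesh relation is supported inside $\Sigma$; hence the mesh computation gives $\End_{\mathscr{D}_\Lambda}(T) \iso kQ'$, the path algebra of the orientation $Q'$ of the underlying graph of $Q$ read off from $\Sigma$. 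In particular $\End_{\mathscr{D}_\Lambda}(T)$ is hereditary of type $Q$.

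Next I would check that $T$ is a tilting object of $\mathscr{D}_\Lambda$, namely that $\Hom_{\mathscr{D}_\Lambda}(T, T[n]) = 0$ for $n \neq 0$ and that $T$ generates $\mathscr{D}_\Lambda$ as a thick subcategory. For the generation, the Auslander--Reiten triangles with terms in $\mathscr{A}_X$ are genuine triangles of $\mathscr{D}_\Lambda$, so starting from $\Sigma$ one reconstructs every object of $\mathscr{A}_X$; thus the thick subcategory $\langle T \rangle$ contains all of $\mathscr{A}_X$ and, being generated by it, is closed under $\tau^{\pm 1}$ and under the Serre functor $\mathbb{S}$. Granting for a moment that $\langle T \rangle = \mathscr{D}_\Lambda$, the only Auslander--Reiten component is $\mathscr{A}_X$, so $[1]$ preserves $\mathscr{A}_X$ and the standard equivalence identifies $\Hom_{\mathscr{D}_\Lambda}(T, T[n])$ with $\Ext^n_{kQ'}(kQ', kQ')$, which vanishes for $n \neq 0$ since $kQ'$ is hereditary and $T$ corresponds to the projective slice. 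The tilting theorem of Keller and Rickard then upgrades the inclusion into a triangle equivalence $\mathscr{D}_{kQ'} \iso \langle T \rangle = \mathscr{D}_\Lambda$, and since $\mathscr{D}_{kQ'} \iso \mathscr{D}_{k\Delta}$ for every orientation of a Dynkin diagram $\Delta$, this says exactly that $\Lambda$ is piecewise hereditary of type $Q$.

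The main obstacle is the generation step $\langle T \rangle = \mathscr{D}_\Lambda$, that is, ruling out that $\mathscr{A}_X$ is only a proper piece of $\mathscr{D}_\Lambda$. Here connectedness of $\Lambda$ is the essential hypothesis: I would argue that the nonzero thick subcategory $\langle T \rangle$, whose indecomposables form the whole Dynkin component $\mathscr{A}_X$ and which is stable under $\mathbb{S}$ and under a power of $[1]$ by Observation~\ref{obs.cy_on_comp}, is in fact a direct factor of $\mathscr{D}_\Lambda$ as a triangulated category, so that connectedness forces it to be everything. The delicate point, which I expect to require the most care, is the vanishing of morphisms between $\mathscr{A}_X$ and any other component: a Dynkin component carries no infinite radical internally, and one must promote this to the statement that there are no nonzero maps leaving $\mathscr{A}_X$, which is precisely what makes $\langle T \rangle$ split off.
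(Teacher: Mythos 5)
Your overall route coincides with the paper's: take a complete slice $T$ in $\mathscr{A}_X = \mathbb{Z}Q$, show the component splits off from the rest of $\mathscr{D}_{\Lambda}$, invoke connectedness of $\Lambda$, and conclude via the fact that a complete slice is a tilting object (the paper offers exactly this alternative, alongside \cite[Theorem~7.1]{Amiot07}). The problem is that the step you yourself flag as delicate and leave at the level of ``I would argue'' --- the vanishing of all morphisms between $\mathscr{A}_X$ and any other Auslander--Reiten component --- is the entire content of the lemma, and your proposal contains no argument for it. Stability of $\langle T \rangle$ under $\mathbb{S}$, $\tau^{\pm 1}$ and a power of the shift does not by itself make it a direct factor of $\mathscr{D}_{\Lambda}$; Serre duality only converts ``no maps out of $\mathscr{A}_X$'' into ``no maps into $\mathscr{A}_X$'', so one direction of the Hom-vanishing still has to be proved. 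The paper proves it concretely: mesh relations compute the dimensions of morphism spaces between objects of $\mathscr{A}_X$ \emph{modulo the infinite radical}, so these dimensions agree with those in $\mathscr{D}_{kQ}$; since $Q$ is Dynkin, each object then admits maps outside $\operatorname{rad}^{\infty}$ to or from only finitely many objects, whence some finite power of the radical, and therefore $\operatorname{rad}^{\infty}$ itself, vanishes on the component; and any map between distinct components lies in $\operatorname{rad}^{\infty}$ (factor it repeatedly through Auslander--Reiten triangles, whose terms stay in $\mathscr{A}_X$), hence is zero.

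Two further logical problems in your write-up would need repair even granting that step. First, you assert at the outset that $\mathbb{Z}Q$ is standard, i.e.\ that the mesh category is equivalent on the nose to the full subcategory of $\mathscr{D}_{\Lambda}$ on $\mathscr{A}_X$; a priori the mesh category only controls morphisms modulo the infinite radical, and standardness on the nose is available precisely after one knows $\operatorname{rad}^{\infty}$ vanishes --- which is the point at issue, so as stated this is circular. Second, you verify $\Hom_{\mathscr{D}_{\Lambda}}(T, T[n]) = 0$ for $n \neq 0$ ``granting for a moment that $\langle T \rangle = \mathscr{D}_{\Lambda}$'': you cannot assume the generation statement in order to check a hypothesis of the tilting theorem that is supposed to deliver it. Both issues disappear once the radical-nilpotence argument above is in place: the component is then standard, connectedness of $\Lambda$ makes it all of $\mathscr{D}_{\Lambda}$, and both tilting conditions follow, which is exactly the order of deductions in the paper's proof.
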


\begin{proof}
One can calculate the dimension of the morphisms between objects in $\mathscr{A}_X$, up to morphisms in the infinite radical, by using mesh relations. Thus these dimensions coincide with the dimensions of morphism spaces in $\mathscr{D}_{kQ}$.

It follows that, for any given object of $\mathscr{A}_X$, there are only finitely many objects such that there are morphisms between the two outside the infinite radical. It follows that some power of the radical, and thus the infinite radical, vanishes. Hence one sees that there are no morphisms from $\mathscr{A}_X$ to any other component or vice versa. So, since $\Lambda$ is connected, $\mathscr{A}_X$ is the entire Auslander-Reiten quiver of $\mathscr{D}_{\Lambda}$. It follows that $\mathscr{D}_{\Lambda} = \mathscr{D}_{kQ}$ (either note that any complete slice is a tilting object, or use \cite[Theorem~7.1]{Amiot07}).
\end{proof}

\subsection{A limit of the Auslander-Reiten component}

In this subsection we focus on the case that the Auslander-Reiten component $\mathscr{A}_X$ is of type $A_{\infty}$. We label the indecomposable objects in $\mathscr{A}_X$ by $\leftsub{i}{X}_j$ with $i \leq j$ as indicated in the following picture.
\[ \begin{tikzpicture}[xscale=2.2]
 \foreach \i in {-2,...,2}
  \node (X\i\i) at (\i, 0) {$\leftsub{\i}{X}_{\i}$};
 \foreach \i/\j in {-2/-1,-1/0,0/1,1/2}
  \node (X\i\j) at (\i+.5, 1) {$\leftsub{\i}{X}_{\j}$};
 \foreach \i/\j in {-3/-1,-2/0,-1/1,0/2,1/3}
  \node (X\i\j) at (\i+1,2) {$\leftsub{\i}{X}_{\j}$};
 \foreach \i/\j in {-2/-1,-1/0,0/1,1/2}
  {
   \draw [->] (X\i\i) -- (X\i\j);
   \draw [->] (X\i\j) -- (X\j\j);
   \draw [->] (X\j\j) -- node [pos=.2] {$|$} (X\i\i);
  }
 \foreach \i/\j/\k in {-2/-1/0,-1/0/1,0/1/2}
   \draw [->] (X\j\k) -- node [pos=.2] {$|$} (X\i\j);
 \foreach \i/\j/\k/\l in {-3/-2/-1/0,-2/-1/0/1,-1/0/1/2,0/1/2/3}
  {
   \draw [->] (X\i\k) -- (X\j\k);
   \draw [->] (X\j\k) -- (X\j\l);
   \draw [->] (X\j\l) -- node [pos=.2] {$|$} (X\i\k);
  }
 \node at (-2.5,0) {$\cdots$};
 \node at (-2.2,1) {$\cdots$};
 \node at (-2.5,2) {$\cdots$};
 \node at (2.5,0) {$\cdots$};
 \node at (2.2,1) {$\cdots$};
 \node at (2.5,2) {$\cdots$};
 \foreach \i in {-2,...,2}
  \node at (\i,2.8) {$\vdots$};
\end{tikzpicture} \]
In particular $\mathbb{S}^{\ell} \leftsub{i}{X}_j = \leftsub{i-\ell}{X}_{j-\ell}[\ell]$.

\begin{proposition} \label{prop.limit}
Let $X \in \mathscr{D}_{\Lambda}$ such that $X[a] = \mathbb{S}^bX$ for some $a \neq b$, and such that the Auslander-Reiten component $\mathscr{A}_X$ of $X$ is of type $\mathbb{Z}A_{\infty}$. With the labels as above we have the following.
\begin{enumerate}
\item For any $T \in \mathscr{D}_{\Lambda}$ there are $i_0$ and $j_0$ such that
\[ \Hom_{\mathscr{D}_{\Lambda}}(T, \leftsub{i}{X}_j) \iso \Hom_{\mathscr{D}_{\Lambda}}(T, \leftsub{i_0}{X}_{j_0}) \qquad \forall i \leq i_0, j \geq j_0. \]
\item For any $T \in \mathscr{D}_{\Lambda}$ there are $i_0$ and $j_0$ such that
\[ \Hom_{\mathscr{D}_{\Lambda}}(\leftsub{i}{X}_j, T) \iso \Hom_{\mathscr{D}_{\Lambda}}(\leftsub{i_0}{X}_{j_0}, T) \qquad \forall i \leq i_0, j \geq j_0. \]
\end{enumerate}
\end{proposition}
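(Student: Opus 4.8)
The plan is to prove part (1) directly and then obtain part (2) by a duality argument using the Serre functor. The key structural input is that $\mathscr{A}_X$ is a component of type $\mathbb{Z}A_{\infty}$, so the morphism spaces \emph{within} the component are governed by mesh relations, and by the argument in the proof of Proposition~\ref{prop.AR_shapes} (and Lemma~\ref{lemma.mesh_finite}) we know that for any fixed $Y \in \mathscr{A}_X$ we have $\Hom_{\mathscr{D}_\Lambda}(Y, \tau^{-i} Y) = 0$ for $i \gg 0$. The geometric idea is that in a $\mathbb{Z}A_\infty$ component the object $\leftsub{i}{X}_j$ ``grows'' as $i$ decreases and $j$ increases; since $\mathscr{D}_\Lambda$ has finite-dimensional Hom-spaces (indeed $\Lambda$ is $\tau_2$-finite, hence $\gld \Lambda \leq 2 < \infty$), the functor $\Hom_{\mathscr{D}_\Lambda}(T, -)$ applied along this growing family must stabilize.

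\textbf{Step 1: Reduce to the behavior of Hom along the boundary rays.} First I would observe that in $\mathbb{Z}A_\infty$ there are irreducible maps $\leftsub{i}{X}_j \to \leftsub{i-1}{X}_j$ and $\leftsub{i}{X}_j \to \leftsub{i}{X}_{j+1}$ (moving toward larger objects), and the almost-split triangles give, for each interior vertex, a short exact-type mesh relation. Applying the homological functor $\Hom_{\mathscr{D}_\Lambda}(T,-)$ to the Auslander–Reiten triangle ending at $\leftsub{i}{X}_j$ yields a long exact sequence relating the Hom-spaces at $\leftsub{i}{X}_j$, its neighbors, and the shift $\Hom_{\mathscr{D}_\Lambda}(T, \tau^{-1}\leftsub{i}{X}_j[\,?\,])$. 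The strategy is to show the connecting/correction terms vanish once $i$ and $j$ are extreme enough, so that enlarging the object no longer changes the Hom.

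\textbf{Step 2: Use finiteness to force stabilization.} The decisive point is that $T$ has a bounded projective resolution and the total dimension $\sum_n \dim \Hom_{\mathscr{D}_\Lambda}(T, \leftsub{i}{X}_j[n])$ is bounded \emph{uniformly} over the whole component. Concretely, since $\mathscr{A}_X$ is a single $\mathbb{Z}A_\infty$ component on which the Serre functor acts (Observation~\ref{obs.cy_on_comp}), and since $\Hom_{\mathscr{D}_\Lambda}(Y,\tau^{-i}Y)=0$ for $i\gg 0$, one shows that only finitely many indecomposables $\leftsub{i}{X}_j$ can receive a nonzero map from any fixed $T$ in a given degree; the ``new'' summands appearing as we pass from $\leftsub{i_0}{X}_{j_0}$ to a larger $\leftsub{i}{X}_j$ are built from $\tau$-shifts that $T$ cannot see. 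Monotonicity then gives a stable value: the maps $\Hom(T,\leftsub{i_0}{X}_{j_0}) \to \Hom(T,\leftsub{i}{X}_j)$ induced by the chain of irreducible maps become isomorphisms for all $i \le i_0$, $j \ge j_0$.

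\textbf{Step 3: Deduce part (2) by Serre duality.} Finally, I would dualize: by the Serre functor we have $\Hom_{\mathscr{D}_\Lambda}(\leftsub{i}{X}_j, T) \iso D\Hom_{\mathscr{D}_\Lambda}(T, \mathbb{S}\leftsub{i}{X}_j)$, and $\mathbb{S}\leftsub{i}{X}_j$ is again an object of $\mathscr{A}_X$ of the form $\leftsub{i'}{X}_{j'}[\,\ell\,]$ with $i'\le i$, $j'\ge j$ shifted consistently (using $\mathbb{S}^{\ell}\leftsub{i}{X}_j = \leftsub{i-\ell}{X}_{j-\ell}[\ell]$). Applying part (1) to each shift $T[-\ell]$ in place of $T$ and taking $k$-duals converts the stabilization of $\Hom(T,-)$ into stabilization of $\Hom(-,T)$, yielding (2) with the same directional conventions on $i,j$.

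\textbf{Main obstacle.} The hard part will be Step 2, specifically making precise and uniform the claim that the Hom-spaces $\Hom_{\mathscr{D}_\Lambda}(T,\leftsub{i}{X}_j)$ genuinely \emph{stabilize} rather than merely being bounded. Boundedness alone does not give an isomorphism; one must control the actual maps induced by the irreducible morphisms and argue that the cokernels/kernels coming from the mesh long exact sequences vanish in the limit. The cleanest way to do this is probably to combine the vanishing $\Hom_{\mathscr{D}_\Lambda}(Y,\tau^{-i}Y)=0$ (for $i\gg 0$) with the fact that $T$ is a bounded complex of projectives, so that $\Hom_{\mathscr{D}_\Lambda}(T,-)$ depends only on finitely many ``layers'' of the component near $T$; the growing objects eventually differ only by summands lying outside this finite window, giving the desired isomorphisms.
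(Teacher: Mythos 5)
Your overall skeleton matches the paper's proof: propagate stabilization along the component by a cone argument and deduce (2) from (1) by duality (the paper simply says ``(2) is dual''; your Step~3 via Serre duality and $\mathbb{S}\leftsub{i}{X}_j = \leftsub{i-1}{X}_{j-1}[1]$ is a correct way to make that precise). But the decisive step is exactly the one you leave open, and the mechanisms you sketch for it do not work. What the cone argument needs is the vanishing $\Hom_{\mathscr{D}_{\Lambda}}(T[\epsilon], \leftsub{\ell}{X}_{\ell}) = 0$ for $\epsilon \in \{-1,0,1\}$ and all but finitely many $\ell$ --- finiteness \emph{along the bottom row only}. Your stronger claim in Step~2, that only finitely many indecomposables $\leftsub{i}{X}_j$ in the whole component receive a nonzero map from a fixed $T$ in a given degree, is false: it contradicts the conclusion of the proposition itself whenever the stable value is nonzero, which does occur (for suitable $T = \mathbb{S}_2^{\ell}\Lambda$, since the graded pieces of the limit module $\mathtt{M} \pi \mathscr{A}_X$ of Construction~\ref{const.limit} are precisely these stable values, and they cannot all vanish because the infinitely many pairwise non-isomorphic $\leftsub{i}{X}_j$ cannot all lie in $\add \pi\Lambda[1]$). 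Likewise, the vanishing $\Hom(Y, \tau^{-i}Y) = 0$ for $i \gg 0$ from Lemma~\ref{lemma.mesh_finite} concerns objects $Y$ \emph{inside} $\mathscr{A}_X$ and gives nothing for an arbitrary $T \in \mathscr{D}_{\Lambda}$; and your asserted uniform bound on $\sum_n \dim \Hom_{\mathscr{D}_{\Lambda}}(T, \leftsub{i}{X}_j[n])$ over the component is unproved and essentially equivalent to what must be shown. Note that stabilization genuinely fails when $a = b$ (for regular $\mathbb{Z}A_{\infty}$ components over wild hereditary algebras these dimensions grow along the $\tau$-orbit), so a correct argument must invoke $a \neq b$ precisely at this point, which your sketch never does.

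The missing idea is the paper's one-line computation: since $[a] = \mathbb{S}^b$ holds on all of $\mathscr{A}_X$ (Observation~\ref{obs.cy_on_comp}), the bottom row is periodic up to a \emph{nonzero} shift,
\[ \leftsub{i+\ell b}{X}_{i+\ell b} = \mathbb{S}^{-\ell b} \leftsub{i}{X}_i[\ell b] = \leftsub{i}{X}_i[\ell(b-a)], \]
and since $T$ and each $\leftsub{i}{X}_i$ are bounded complexes over an algebra of finite global dimension, $\Hom_{\mathscr{D}_{\Lambda}}(T, Z[m]) = 0$ for $|m| \gg 0$; running $i$ over the residues modulo $b$ (note $b \neq 0$, as $X[a] \iso X$ is impossible for $a \neq 0$) yields $\Hom_{\mathscr{D}_{\Lambda}}(T[-1] \oplus T \oplus T[1], \leftsub{\ell}{X}_{\ell}) = 0$ for all $\ell$ outside a finite interval $\{i_0, \ldots, j_0\}$. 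With this in hand your Step~1 closes immediately, and no mesh long exact sequences are needed: the comparison maps $\leftsub{i-1}{X}_j \to \leftsub{i}{X}_j$ and $\leftsub{i}{X}_j \to \leftsub{i}{X}_{j+1}$ (note the direction --- in the paper's labelling the irreducible maps increase $i$ or $j$, not decrease them as written in your Step~1) sit in triangles with cones $\leftsub{i-1}{X}_{i-1}[1]$ and $\leftsub{j+1}{X}_{j+1}$ respectively, so for $i \leq i_0$ and $j \geq j_0$ applying $\Hom_{\mathscr{D}_{\Lambda}}(T,-)$ gives the desired isomorphisms. Your closing heuristic that the growing objects ``differ only by summands outside a finite window'' should be replaced by exactly this cone statement: the objects $\leftsub{i}{X}_j$ are indecomposable, so they never differ by direct summands.
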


\begin{proof}
We only prove (1), (2) is dual.

By Observation~\ref{obs.cy_on_comp} we have $[a] = \mathbb{S}^b$ on objects of the component $\mathscr{A}_X$. Hence, for $i$ and $\ell \in \mathbb{Z}$, we have
\[ \leftsub{i + \ell b}{X}_{i + \ell b} = \mathbb{S}^{- \ell b} \leftsub{i}{X}_i [\ell b] = \leftsub{i}{X}_i [\ell (b - a)]. \]
In particular, for given $i$ the space $\Hom_{\mathscr{D}_{\Lambda}}(T, \leftsub{i + \ell b}{X}_{i + \ell b})$ is non-zero for only finitely many $\ell$. It follows that $\Hom_{\mathscr{D}_{\Lambda}}(T, \leftsub{i}{X}_i) \neq 0$ for only finitely many $i$.

Thus we can choose $i_0$ and $j_0$ such that
\[ \Hom_{\mathscr{D}_{\Lambda}}(T[-1] \oplus T \oplus T[1], \leftsub{\ell}{X}_{\ell}) = 0 \qquad \forall \ell \not \in \{i_0, \ldots, j_0\}. \]
To complete the proof, it suffices to show that for $i \leq i_0$ and $j \geq j_0$ the maps $\leftsub{i-1}{X}_j \to \leftsub{i}{X}_j$ and $\leftsub{i}{X}_j \to \leftsub{i}{X}_{j+1}$ induce isomorphisms
\begin{align*}
\Hom_{\mathscr{D}_{\Lambda}}(T, \leftsub{i-1}{X}_j) & \to[30] \Hom_{\mathscr{D}_{\Lambda}}(T, \leftsub{i}{X}_j) \text{ and} \\
\Hom_{\mathscr{D}_{\Lambda}}(T, \leftsub{i}{X}_j) & \to[30] \Hom_{\mathscr{D}_{\Lambda}}(T, \leftsub{i}{X}_{j+1}) \text{, respectively.}
\end{align*}
This follows from our choice of $i_0$ and $j_0$, and the fact that the cones of the two maps are $\leftsub{i-1}{X}_{i-1}[1]$ and $\leftsub{j+1}{X}_{j+1}$, respectively.
\end{proof}

\begin{construction} \label{const.limit}
In the setup of Proposition~\ref{prop.limit}, let $\mathtt{M} \pi \mathscr{A}_X$ be the graded $\widetilde{\Lambda}$-module given by
\[ (\mathtt{M} \pi \mathscr{A}_X)_{\ell} = \Hom_{\mathscr{D}_{\Lambda}}(\mathbb{S}_2^{\ell} \Lambda, \leftsub{i}{X}_j) \quad \text{ for } i \ll 0, j \gg 0. \]
(This is well-defined by Proposition~\ref{prop.limit}.) Note that $\mathtt{M} \pi \mathscr{A}_X$ does not have finite-dimension, but its graded pieces do.
\end{construction}

\begin{lemma} \label{lemma.no_finite_summands}
The graded $\widetilde{\Lambda}$-module $\mathtt{M} \pi \mathscr{A}_X$ does not have any non-zero finite-dimensional direct summands.
\end{lemma}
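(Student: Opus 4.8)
The plan is to show that a nonzero finite-dimensional direct summand of $M := \mathtt{M}\pi\mathscr{A}_X$ would have to be a direct summand of $\mathtt{M}\pi(\leftsub{i}{X}_{j})$ for actual objects $\leftsub{i}{X}_{j}$ of the component, and then to derive a contradiction from the fact that these finite stages are indecomposable and, for different ``widths'' $j-i$, pairwise non-isomorphic. Throughout we use that, since $\Lambda$ is $\tau_2$-finite, the algebra $\widetilde{\Lambda}$ is finite-dimensional; fix $g$ with $\widetilde{\Lambda}_m = 0$ for all $m > g$. Suppose, for contradiction, that $M = N \oplus N'$ with $N$ nonzero and finite-dimensional, concentrated in degrees $[\ell_1,\ell_2]$, and write $s \colon N \hookrightarrow M$ and $r \colon M \twoheadrightarrow N$ for the associated graded module maps with $r s = 1_N$.

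First I would descend $N$ to a finite stage. By Proposition~\ref{prop.limit} and the well-definedness of Construction~\ref{const.limit}, one may choose $i \ll 0$ and $j \gg 0$ so that the canonical comparison maps identify $M_\ell$ with $\mathtt{M}\pi(\leftsub{i}{X}_{j})_\ell$, compatibly with the $\widetilde{\Lambda}$-action, for every degree $\ell$ in the enlarged window $[\ell_1 - g, \ell_2 + g]$. Because $\widetilde{\Lambda}$ is generated in degrees $\le g$ and $N$ lives in $[\ell_1,\ell_2]$, both a finite presentation of $N$ and every single application of a homogeneous element of $\widetilde{\Lambda}$ that can land in the support of $N$ take place inside this window. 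Hence $s$ transports to a module map $s' \colon N \longrightarrow \mathtt{M}\pi(\leftsub{i}{X}_{j})$, and $r$ transports to $r' \colon \mathtt{M}\pi(\leftsub{i}{X}_{j}) \longrightarrow N$ defined as $r$ on the window and as $0$ in all other degrees. The only points where the $\widetilde{\Lambda}$-linearity of $s'$ and $r'$ is not automatic are at the top and the bottom of the window, and this is exactly what the enlargement by $g$ controls; one checks $r' s' = 1_N$, so that $N$ is a direct summand of $\mathtt{M}\pi(\leftsub{i}{X}_{j})$.

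To finish, recall that for all but finitely many wedges $\pi(\leftsub{i}{X}_{j}) \notin \add \pi\Lambda[1]$, so the equivalence $\mathtt{M}$ between $\mathscr{C}_{\Lambda}/(\pi\Lambda[1])$ and $\mod \widetilde{\Lambda}$ takes the indecomposable $\pi(\leftsub{i}{X}_{j})$ to an indecomposable module; thus $N \cong \mathtt{M}\pi(\leftsub{i}{X}_{j})$. Now the wedge in the previous step can be chosen, still within the stable range, of two different widths $j-i$. Since $\mathbb{S}_2 = \tau[-1]$ and both $\tau$ and the shift preserve the width $j-i$ of an object of a $\mathbb{Z}A_\infty$ component, objects of different width lie in different $\mathbb{S}_2$-orbits, hence have non-isomorphic images under $\pi$ and therefore under $\mathtt{M}\pi$. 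This exhibits $N$ as isomorphic to two non-isomorphic indecomposable modules, a contradiction; so $N = 0$.

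I expect the second paragraph to be the main obstacle. The module $M$ is only a two-sided limit of the finite stages --- a colimit as $j \to \infty$, but a stabilisation (rather than a colimit) as $i \to -\infty$ --- so one cannot simply quote compactness of finite-dimensional objects in a filtered colimit. The substitute is the explicit windowing argument above: one must verify carefully that the transported maps $s'$ and $r'$ remain $\widetilde{\Lambda}$-linear across the boundary of the window, which is precisely why the window is enlarged by the top degree $g$ of $\widetilde{\Lambda}$ and why the finite-dimensionality of $\widetilde{\Lambda}$ is used.
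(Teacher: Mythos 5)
Your argument is correct and takes essentially the same route as the paper: transporting $s$ and $r$ across an enlarged window is exactly the paper's Observation~\ref{obs.hom_bounded_unbounded} (the paper enlarges by $1$, using that $\widetilde{\Lambda}$ is generated in degrees $0$ and $1$, rather than by the top degree $g$), and the conclusion that the finite stages $\mathtt{M} \pi \leftsub{i}{X}_j$ are indecomposable and pairwise non-isomorphic matches the paper's final step. One harmless slip: it is not literally true that all but finitely many \emph{wedges} have $\pi \leftsub{i}{X}_j \notin \add \pi\Lambda[1]$ (an entire $\mathbb{S}_2$-orbit of wedges could map to one summand of $\pi\Lambda[1]$), but by your own width argument such wedges occupy only finitely many \emph{widths}, so two good widths within the stable range still exist and the contradiction goes through.
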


In the proof we will use the following observation:

\begin{observation} \label{obs.hom_bounded_unbounded}
Let $\Gamma$ be a $\mathbb{Z}$-graded algebra, which is generated over $\Gamma_0$ by $\Gamma_1$. For a graded $\Gamma$-module $M$, and a finite interval $I \subset \mathbb{Z}$, we denote by $M_I$ the module coinciding with $M$ in degrees in $I$, and vanishing in degrees outside $I$. Then
\begin{enumerate}
\item If $M \in \mod\gr \Gamma$ is concentrated in degrees $\ell_{\min}, \ldots, \ell_{\max}$, and $N \in \mod\gr \Gamma$, then 
\begin{align*}
\Hom_{\mod\gr \Gamma}(M, N) & = \{ (f_i)_{i \in \mathbb{Z}} \in \prod_{i \in \mathbb{Z}} \Hom_{\Gamma_0}(M_i, N_i) \mid \forall g \in \Gamma_1 \\ & \qquad \qquad \forall m \in M_i \colon gf_i(m) = f_{i+1}(gm) \} \\
& = \Hom_{\mod\gr \Gamma}(M, N_{[\ell_{\min}, \ell_{\max} + 1]}),
\end{align*}
where the latter equality holds because the conditions are empty unless $M_i \neq 0$.
\item Similarly, for $N \in \mod\gr \Gamma$ concentrated in degrees $\ell_{\min}, \ldots, \ell_{\max}$ and  $M \in \mod\gr \Gamma$ arbitrary we have
\[ \Hom_{\mod\gr \Gamma}(M, N) = \Hom_{\mod\gr \Gamma}(M_{[\ell_{\min} - 1, \ell_{\max}]}, N). \]
\end{enumerate}
\end{observation}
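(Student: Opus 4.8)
The plan is to reduce everything to the explicit description of graded homomorphisms recorded in the first equality of part~(1), and then simply to track which degrees actually enter the defining conditions. First I would justify that first equality: a morphism in $\mod\gr \Gamma$ is by definition a degree-preserving $\Gamma$-linear map, hence the same thing as a family $(f_i)_{i \in \mathbb{Z}}$ of $\Gamma_0$-linear maps $f_i \colon M_i \to N_i$ that is compatible with the entire algebra action. Since $\Gamma$ is generated over $\Gamma_0$ by $\Gamma_1$, an easy induction shows that compatibility with all of $\Gamma$ is equivalent to compatibility with $\Gamma_0$ and $\Gamma_1$ alone: if $g f_i(m) = f_{i+1}(gm)$ holds for every $g \in \Gamma_1$, then for a product $g_1 \cdots g_d$ of degree-one elements one peels off one factor at a time. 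This yields the displayed set-theoretic description of $\Hom_{\mod\gr \Gamma}(M, N)$.

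For the second equality of~(1) I would read off which data survive when $M$ is concentrated in degrees $\ell_{\min}, \ldots, \ell_{\max}$. Each $f_i$ is forced to vanish unless $i \in [\ell_{\min}, \ell_{\max}]$, so every $f_i$ has target $N_i$ with $i$ in that range; and the compatibility condition indexed by $i$ is vacuous unless $M_i \neq 0$, i.e.\ unless $i \in [\ell_{\min}, \ell_{\max}]$. The only subtle point—and the one worth flagging—is the boundary index $i = \ell_{\max}$: there $gm \in M_{\ell_{\max}+1} = 0$, so the relation reads $g f_{\ell_{\max}}(m) = 0$ in $N_{\ell_{\max}+1}$, and this genuinely involves the component $N_{\ell_{\max}+1}$. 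Thus the only pieces of $N$ that appear, whether as targets of the $f_i$ or inside a non-vacuous relation, lie in degrees $[\ell_{\min}, \ell_{\max}+1]$. Replacing $N$ by $N_{[\ell_{\min}, \ell_{\max}+1]}$ changes neither the admissible families nor the relations among them, because the $\Gamma_1$-action from degree $\ell_{\max}$ into degree $\ell_{\max}+1$ is preserved by the truncation—truncation only kills the action \emph{out of} degree $\ell_{\max}+1$. Hence the two Hom spaces coincide, naturally in both variables.

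Part~(2) I would obtain by the symmetric bookkeeping at the other end (equivalently, by applying the graded duality, which swaps source and target and the roles of $\ell_{\min}$ and $\ell_{\max}$). With $N$ concentrated in $[\ell_{\min}, \ell_{\max}]$, the component $f_i$ vanishes unless $i \in [\ell_{\min}, \ell_{\max}]$, and now the boundary relation occurs at $i = \ell_{\min}-1$: there $f_{\ell_{\min}-1} = 0$, so the condition becomes $f_{\ell_{\min}}(gm) = 0$ for $m \in M_{\ell_{\min}-1}$, which involves the component $M_{\ell_{\min}-1}$. Consequently the only part of $M$ entering the description lies in degrees $[\ell_{\min}-1, \ell_{\max}]$, giving $\Hom_{\mod\gr \Gamma}(M, N) = \Hom_{\mod\gr \Gamma}(M_{[\ell_{\min}-1, \ell_{\max}]}, N)$.

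I expect no genuine obstacle here: the entire content is the two off-by-one boundary conditions. The one thing to get right, and the point on which a sloppy argument would fail, is that the relevant $\Gamma_1$-action across the boundary degree is preserved under truncation, so that no spurious homomorphisms are introduced (by trivializing a boundary relation) or destroyed (by discarding the degree in which that relation lives).
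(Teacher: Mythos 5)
Your proof is correct and takes essentially the same route as the paper, whose entire justification is the inline remark that the compatibility conditions are empty unless $M_i \neq 0$ --- i.e.\ exactly your bookkeeping of which degrees of $N$ (resp.\ $M$) enter the explicit description of $\Hom_{\mod\gr \Gamma}(M,N)$. Your careful handling of the off-by-one boundary relations at degrees $\ell_{\max}+1$ and $\ell_{\min}-1$, and of the fact that truncation preserves the $\Gamma_1$-action across the boundary, is precisely the content the paper leaves implicit.
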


\begin{proof}[Proof of Lemma~\ref{lemma.no_finite_summands}]
Assume $H$ is a finite-dimensional direct summand of $\mathtt{M} \pi \mathscr{A}_X$. Then $H$ is concentrated in finitely many degrees, say $\ell_{\min}$ to $\ell_{\max}$. By Proposition~\ref{prop.limit} and the construction of $\mathtt{M} \pi \mathscr{A}_X$ there are $i_0$ and $j_0$ such that
\[ (\mathtt{M} \pi \leftsub{i}{X}_j)_{\ell} = (\mathtt{M} \pi \mathscr{A}_X)_{\ell} \qquad \forall i \leq i_0, j \geq j_0, \ell \in \{\ell_{\min}-1, \ldots, \ell_{\max}+1\}. \]
Since $\widetilde{\Lambda}$ is generated in degrees $0$ and $1$ it follows from Observation~\ref{obs.hom_bounded_unbounded} above that
\begin{align*}
& \Hom_{\mod\gr \widetilde{\Lambda}}(H, \mathtt{M} \pi \mathscr{A}_X) = \Hom_{\mod\gr \widetilde{\Lambda}}(H, \mathtt{M} \pi \leftsub{i}{X}_j) \qquad \forall i \leq i_0, j \geq j_0 \text{, and} \\
& \Hom_{\mod\gr \widetilde{\Lambda}}(\mathtt{M} \pi \mathscr{A}_X, H) = \Hom_{\mod\gr \widetilde{\Lambda}}(\mathtt{M} \pi \leftsub{i}{X}_j, H) \qquad \forall i \leq i_0, j \geq j_0.
\end{align*}
It follows that $H$ is a direct summand of $\mathtt{M} \pi \leftsub{i}{X}_j$ for any $i \leq i_0$ and $j \geq j_0$. However the graded $\widetilde{\Lambda}$-modules $\mathtt{M} \pi \leftsub{i}{X}_j$ are indecomposable (or zero), and pairwise non-isomorphic. Hence $H = 0$.
\end{proof}

\begin{lemma} \label{lemma.periodic}
We have $(\mathtt{M} \pi \mathscr{A}_X) \left< a-b \right> \iso \mathtt{M} \pi \mathscr{A}_X$.
\end{lemma}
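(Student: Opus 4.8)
The plan is to identify how the autoequivalence $\mathbb{S}_2^{a-b}$ of $\mathscr{D}_{\Lambda}$ acts on the objects $\leftsub{i}{X}_j$, to observe that it simply translates the labels (keeping ``deep'' objects deep), and then to transport this through $\mathtt{M}\pi$, which turns $\mathbb{S}_2^{a-b}$ into the grading shift $\left< a-b \right>$.

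First I would record the basic identity on the component $\mathscr{A}_X$. Combining the formula $\mathbb{S}^\ell\leftsub{i}{X}_j = \leftsub{i-\ell}{X}_{j-\ell}[\ell]$ (with $\ell = b$) with the Calabi--Yau isomorphism $\leftsub{i}{X}_j[a] \iso \mathbb{S}^b\leftsub{i}{X}_j$ of Observation~\ref{obs.cy_on_comp} yields $\leftsub{i}{X}_j[a] \iso \leftsub{i-b}{X}_{j-b}[b]$, and hence
\[ \leftsub{i}{X}_j[a-b] \iso \leftsub{i-b}{X}_{j-b} \qquad \text{for all } i \leq j. \]
Using $\mathbb{S}_2 = \mathbb{S}[-2]$ and the formula for $\mathbb{S}^{a-b}$, this gives
\[ \mathbb{S}_2^{a-b}\leftsub{i}{X}_j = \leftsub{i-(a-b)}{X}_{j-(a-b)}[-(a-b)] \iso \leftsub{i-a+2b}{X}_{j-a+2b}, \]
where the last isomorphism applies the displayed identity backwards. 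Thus $\mathbb{S}_2^{a-b}$ sends $\leftsub{i}{X}_j$ to $\leftsub{i-a+2b}{X}_{j-a+2b}$; in particular it keeps $i \ll 0$ and $j \gg 0$.

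Next I would pass through $\mathtt{M}\pi$. Straight from the definition of the grading, $\Hom_{\mathscr{D}_{\Lambda}}(\mathbb{S}_2^\ell\Lambda, \mathbb{S}_2^{a-b}Y) = \Hom_{\mathscr{D}_{\Lambda}}(\mathbb{S}_2^{\ell-(a-b)}\Lambda, Y)$, so $\mathtt{M}\pi(\mathbb{S}_2^{a-b}Y) = (\mathtt{M}\pi Y)\left< a-b \right>$ naturally in $Y$. Applied to the isomorphism above this reads
\[ (\mathtt{M}\pi\leftsub{i}{X}_j)\left< a-b \right> \iso \mathtt{M}\pi\leftsub{i-a+2b}{X}_{j-a+2b}. \]
Finally I would let $i \to -\infty$ and $j \to +\infty$: by Proposition~\ref{prop.limit} each graded piece of both sides stabilizes, and by Construction~\ref{const.limit} these stable values are precisely the graded pieces of $(\mathtt{M}\pi\mathscr{A}_X)\left< a-b \right>$ and of $\mathtt{M}\pi\mathscr{A}_X$. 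Since the index shift $-a+2b$ keeps us in the stable range, the two agree in every degree.

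The one point requiring care --- and the place I expect the only real (if modest) difficulty --- is to upgrade this degreewise identification to an isomorphism of graded $\widetilde{\Lambda}$-modules. For this I would check that the identifications are natural in $\ell$, i.e.\ compatible with multiplication by $\widetilde{\Lambda}_1$. This holds because $\mathbb{S}_2^{a-b}$ is a functor and the $\widetilde{\Lambda}$-action on $\mathtt{M}\pi$ is given by composition with morphisms out of the $\mathbb{S}_2^\ell\Lambda$, with which $\mathbb{S}_2^{a-b}$ commutes; hence the stabilized isomorphisms assemble into a graded morphism that is an isomorphism in each degree, as required.
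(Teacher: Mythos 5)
Your proof is correct and follows essentially the same route as the paper: both reduce the claim to the computation $(\mathtt{M}\pi\leftsub{i}{X}_j)\left<a-b\right> \iso \mathtt{M}\pi\leftsub{i'}{X}_{j'}$ for a fixed translate $(i',j')$ of $(i,j)$, using $\mathbb{S}_2 = \mathbb{S}[-2]$, the identity $\mathbb{S}^{\ell}\leftsub{i}{X}_j = \leftsub{i-\ell}{X}_{j-\ell}[\ell]$ together with Observation~\ref{obs.cy_on_comp}, and then pass to the limit via Construction~\ref{const.limit} (where you are, if anything, more explicit than the paper about compatibility with the $\widetilde{\Lambda}$-action). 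The only discrepancy is that your grading-shift convention is opposite to the paper's (you obtain the translate $i-a+2b$ where the paper's computation yields $i-2b+a$), which is harmless since $\left<a-b\right>$-periodicity and $\left<b-a\right>$-periodicity of a graded module are equivalent.
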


\begin{proof}
We have
\begin{align*}
(\mathtt{M} \pi (\leftsub{i}{X}_j)) \left< a-b \right> & = \bigoplus_{\ell \in \mathbb{Z}} \Hom_{\mathscr{D}_{\Lambda}}(\mathbb{S}_2^{\ell+a-b} \Lambda, \leftsub{i}{X}_j) \\
& = \bigoplus_{\ell \in \mathbb{Z}} \Hom_{\mathscr{D}_{\Lambda}}(\mathbb{S}_2^{\ell} \Lambda, \mathbb{S}^{b-a} \leftsub{i}{X}_j [2a - 2b]) \\
& = \bigoplus_{\ell \in \mathbb{Z}} \Hom_{\mathscr{D}_{\Lambda}}(\mathbb{S}_2^{\ell} \Lambda, \mathbb{S}^{2b-a} \leftsub{i}{X}_j [a - 2b]) \\
& = \bigoplus_{\ell \in \mathbb{Z}} \Hom_{\mathscr{D}_{\Lambda}}(\mathbb{S}_2^{\ell} \Lambda, \leftsub{i-2b+a}{X}_{j-2b+a}) \\
& = \mathtt{M} \pi (\leftsub{i-2b+a}{X}_{j-2b+a}).
\end{align*}
Now the claim follows from the construction of $\mathtt{M} \pi \mathscr{A}_X$ (Construction~\ref{const.limit}).
\end{proof}

We now only have to recall the following result of Dowbor and Skowro{\'n}ski before we can give a proof for the main result of this section.

\begin{theorem}[\cite{DowborSkowronski}] \label{theo.DowborSkowronski}
Let $R$ be a finite-dimensional graded algebra. Assume there is a graded $R$-module which has finite-dimensional graded pieces, and does not have any finite-dimensional direct summands. Then there is a finite-dimensional $R$-module which is not gradable.
\end{theorem}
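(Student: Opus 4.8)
The plan is to translate the statement into covering theory and then realize the ungradable module either by an explicit ``wrap-around'' construction (which already suffices for the application in this paper) or, in full generality, by a density argument. A $\mathbb{Z}$-grading on the finite-dimensional algebra $R$ is the same datum as a $\mathbb{Z}$-Galois covering $\widetilde{R} \to R$: graded $R$-modules are exactly $\widetilde{R}$-modules, and the forgetful (push-down) functor sends an $\widetilde{R}$-module to its underlying $R$-module. Thus a finite-dimensional $R$-module is gradable precisely when it lies in the image of the push-down, and the hypothesis supplies an $\widetilde{R}$-module $N$ of infinite support with finite-dimensional graded pieces and no finite-dimensional summand. I want to produce a finite-dimensional $R$-module that is not in the image of the push-down.

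The core construction, and the one that is available in the application (where $N = \mathtt{M}\pi\mathscr{A}_X$ is periodic by Lemma~\ref{lemma.periodic}), builds a family of finite-dimensional modules from a finite window of $N$ by gluing its two ends through a scalar $\alpha \in k^\times$. Since $R$ is concentrated in finitely many degrees, say $[-d,d]$, and $N\langle q \rangle \cong N$ for some period $q$, I would fix such an isomorphism and, taking $q > 2d$, set $W_\alpha = \bigoplus_{i=0}^{q-1} N_i$ with the action of a homogeneous element defined to agree with that of $N$ inside the window and to be carried back across the seam by the periodicity isomorphism, weighted by $\alpha^{\pm 1}$, whenever it would leave the window. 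The choice $q > 2d$ makes this well-defined and associative, and each $W_\alpha$ is finite-dimensional.

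I would then feed this family into the dichotomy of Theorem~\ref{theorem.param-ungr}. A short computation with the generalized eigenspaces of a putative isomorphism $W_\alpha \to W_\beta$ --- exactly the scaling bookkeeping carried out in the proof of that theorem, now localized at the seam --- shows that such an isomorphism forces $\alpha/\beta$ to be a root of unity of order bounded by $q \cdot \dim W_1$. Absorbing the uniform $\sigma_\alpha$-twist into a degreewise rescaling, which is available everywhere except at the seam where it leaves a residual scalar $\alpha^{\pm q}$, identifies the twist family $(W_1)_\alpha$ with $W_{\alpha^{\pm q}}$ up to isomorphism; hence the modules $(W_1)_\alpha$ are almost all pairwise non-isomorphic. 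By Theorem~\ref{theorem.param-ungr} this places $W_1$ in its second alternative, so $W_1$ is a finite-dimensional non-gradable module. Here the no-finite-summand hypothesis is exactly what guarantees that the seam cannot be decoupled, so that the family is genuinely non-constant rather than collapsing.

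The main obstacle is the general statement, where no periodicity --- indeed no bound on $\dim N_i$ --- is available, so there is no canonical way to glue the two ends of a window and the explicit modules $W_\alpha$ need not exist. In that generality I would argue by contradiction through covering theory: assuming every finite-dimensional $R$-module is gradable amounts to the push-down $\widetilde{R} \to R$ being dense, and the locally-support-finite dichotomy of Dowbor and Skowro\'nski then forces every locally finite $\widetilde{R}$-module --- in particular $N$ --- to split off finite-dimensional summands, contradicting the hypothesis. Making this precise requires approximating the infinite-support module $N$ by its finite truncations and running a genericity argument in the relevant representation varieties; this is the technical heart of the result, and is exactly what the cited work of Dowbor and Skowro\'nski \cite{DowborSkowronski} supplies.
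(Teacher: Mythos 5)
First, a point of order: the paper does not prove this statement at all --- it is imported verbatim from Dowbor--Skowro\'nski \cite{DowborSkowronski}, so there is no internal proof to compare against, and your sketch has to be judged on its own merits against the cited work. Your translation into covering theory (a $\mathbb{Z}$-grading as a $\mathbb{Z}$-Galois covering, gradability as membership in the image of the push-down) is correct, and your seam-gluing construction of the family $W_\alpha$ from one period of $N$, with $q$ larger than twice the support of $R$ so that the seam is crossed at most once, is indeed the standard mechanism; note also that periodicity is in fact available in both places the paper uses the theorem (via Lemma~\ref{lemma.periodic}, and by construction in the proof of Proposition~\ref{prop.zigzag}), so restricting to the periodic case is a reasonable move.

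The genuine gap is the step you assert but do not prove: that an isomorphism $W_\alpha \cong W_\beta$ forces $\alpha/\beta$ to be a root of unity of bounded order, with the no-finite-summand hypothesis ``guaranteeing that the seam cannot be decoupled.'' Theorem~\ref{theorem.param-ungr} cannot supply this. Its content is a dichotomy about the twist family of a \emph{single} module: either all twists $(W_1)_\alpha$ are isomorphic and $W_1$ is gradable, or they are almost all distinct and $W_1$ is not. Combined with your (correct) identification $(W_1)_\alpha \cong W_{\alpha^{\pm q}}$, this tells you only that \emph{either} the $W_\gamma$ are essentially all isomorphic and $W_1$ is gradable, \emph{or} $W_1$ is the desired non-gradable module --- it gives no way to exclude the first alternative. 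To close the argument you must show that a $\mathbb{Z}$-graded lift of the glued module $W_1$ would force $N$ to split off a finite-dimensional direct summand (say, by pulling $W_1$ back up the covering and comparing the resulting decomposition with $N$); this implication uses the hypothesis on $N$ in an essential way, is not ``the eigenspace bookkeeping localized at the seam'' (in Theorem~\ref{theorem.param-ungr} the computation happens entirely on one finite-dimensional module, whereas here you must relate the finite-dimensional $W_1$ to the infinite-dimensional $N$), and is precisely the technical heart of \cite{DowborSkowronski}. Your fallback for the general non-periodic case is openly circular: ``push-down dense implies every locally finite module splits off finite-dimensional summands'' is the contrapositive of the theorem itself, and you concede the point by deferring to the citation. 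Deferring is defensible --- the paper does the same --- but it means the proposal is a correct strategy outline rather than a proof.
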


\begin{proof}[Proof of Theorem~\ref{theorem.fracCY}]
If $\Lambda$ is piecewise hereditary then the functor $\pi \colon \mathscr{D}_{\Lambda} \to \mathscr{C}_{\Lambda}$ is dense (see \cite[Theorem in Section~4]{K_orbit} for a formal proof).

Assume conversely that $\Lambda$ is not piecewise hereditary. By Proposition~\ref{prop.AR_shapes} and Lemma~\ref{lemma.non-Dynkin} we know that the Auslander-Reiten component $\mathscr{A}_X$ of $X$ is of type $\mathbb{Z} A_{\infty}$.

By Lemma~\ref{lemma.no_finite_summands} the graded $\widetilde{\Lambda}$-module $\mathtt{M} \pi \mathscr{A}_X$ has no finite-dimensional summands. By Lemma~\ref{lemma.periodic} it is periodic. It follows from Theorem~\ref{theo.DowborSkowronski} above that the push-down functor $\mod\gr \widetilde{\Lambda} \to \mod \widetilde{\Lambda}$ is not dense. Now the claim follows from Theorem~\ref{theorem.imgr}.
\end{proof}

\begin{example}
Let $\Lambda$ be the algebra given by the quiver
\[ \begin{tikzpicture}
 \node (A1) at (0,1) {$1$};
 \node (A2) at (1,1) {$2$};
 \node (A3) at (2,1) {$\cdots$};
 \node (A4) at (3,1) {$n$};
 \node (B1) at (0,0) {$\widetilde{1}$};
 \node (B2) at (1,0) {$\widetilde{2}$};
 \node (B3) at (2,0) {$\cdots$};
 \node (B4) at (3,0) {$\widetilde{n}$};
 \draw [->] (A1) -- (B1);
 \draw [->] (A2) -- (B2);
 \draw [->] (A4) -- (B4);
 \draw [->] (A1) -- (A2);
 \draw [->] (A2) -- (A3);
 \draw [->] (A3) -- (A4);
 \draw [->] (B1) -- (B2);
 \draw [->] (B2) -- (B3);
 \draw [->] (B3) -- (B4);
\end{tikzpicture} \]
with relations making all small squares commutative. Then the following are equivalent.
\begin{enumerate}
\item $\Lambda$ is piecewise hereditary,
\item The functor $\mathscr{D}_{\Lambda} \to \mathscr{C}_{\Lambda}$ is dense, and
\item $n \leq 5$.
\end{enumerate}
\end{example}

\begin{proof}
(1) \iff{} (3): By \cite{Sefi} the algebra $\Lambda$ is equivalent to $kA_{2n} / \Rad^3 kA_{2n}$, where $A_{2n}$ denotes a linearly oriented quiver of type $A_{2n}$. By \cite{HappelSeidel} these algebras are piecewise hereditary if and only if $2n \leq 11$, that is $n \leq 5$.

(1) \iff{} (2): Note that the algebra is isomorphic to the tensor product $kA_2 \otimes kA_n$. The Serre functor acts diagonally on modules which are tensor products of modules for $A_2$ and $A_n$. Thus
\[ \mathbb{S}^{3n+3} S_1 = \mathbb{S}^{3n+3} (S_1^{A_2} \otimes S_1^{A_n}) = \mathbb{S}^{3n+3} S_1^{A_2} \otimes \mathbb{S}^{3n+3} S_1^{A_n}, \]
where $S_1^{A_2}$ and $S_1^{A_n}$ denote the simple projective modules for $kA_2$ and $kA_n$, respectively. Since $\mathbb{S}^3 S_1^{A_2} = S_1[1]$ and $\mathbb{S}^{n+1} S_1^{A_n} = [n-1]$ we obtain
\[ \mathbb{S}^{3n+3} S_1 = S_1^{A_2}[n+1] \otimes S_1^{A_n}[3n-3] = S_1[4n-2]. \]
Now note that for $n \neq 5$ we have $4n-2 \neq 3n+3$. Thus, by Theorem~\ref{theorem.fracCY}, for $n \neq 5$ we have the equivalence (2) \iff{} (3).

Finally note that for $n = 5$ we know that (1) holds, so (2) also holds by \cite{K_orbit}.
\end{proof}

\begin{remark}
Using the same arguments as in the proof of Theorem~\ref{theorem.fracCY}, one can show that, provided there is an indecomposable $\Lambda$-module $X$ satisfying $X[a] \iso \mathbb{S}^b X$ for some $a \neq b$, the answer to Skowro{\'n}ski's question (see \ref{question.skowronski}) is that all $T(\Lambda)$ modules are gradable precisely if $\Lambda$ is piecewise hereditary.
\end{remark}

\section{Oriented cycles}

\begin{theorem} \label{thm.cycles}
Let $\Lambda$ be a $\tau_2$-finite algebra such that the quiver of $\Lambda$ contains an oriented cycle. Then the functor $\pi \colon \mathscr{D}_{\Lambda} \to \mathscr{C}_{\Lambda}$ is not dense.
\end{theorem}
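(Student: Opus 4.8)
The plan is to reduce to the non-gradability criterion of Theorem~\ref{theorem.imgr} and then to produce a non-gradable module via the Dowbor--Skowro\'nski theorem (Theorem~\ref{theo.DowborSkowronski}), in exactly the same spirit as the proof of Theorem~\ref{theorem.fracCY}, but with the fractional Calabi--Yau Auslander--Reiten component replaced by a module read off directly from the oriented cycle. By Theorem~\ref{theorem.imgr} the functor $\pi$ is dense if and only if every finite-dimensional $\widetilde{\Lambda}$-module is gradable, that is if and only if the push-down functor $\mod\gr\widetilde{\Lambda}\to\mod\widetilde{\Lambda}$ is dense. Hence, by Theorem~\ref{theo.DowborSkowronski}, it suffices to construct a single graded $\widetilde{\Lambda}$-module $N$ with finite-dimensional graded pieces, spread over infinitely many degrees, and having no non-zero finite-dimensional direct summand.

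To build $N$ I would first read off the relevant part of the quiver of $\widetilde{\Lambda}$ from Theorem~\ref{theo.tensoralg}: the algebra $\widetilde{\Lambda}$ is generated in degrees $0$ and $1$, its degree-$0$ part is $\Lambda$ (so the degree-$0$ arrows are precisely the arrows of the quiver $Q_\Lambda$), and its minimal degree-$1$ arrows correspond to a minimal set of relations of $\Lambda$. An oriented cycle in $Q_\Lambda$ forces $\Lambda$ to be non-hereditary, hence of global dimension $2$ with non-trivial relations, so degree-$1$ arrows do exist. Fixing an oriented cycle $Z$, finite-dimensionality of $\Lambda$ makes some power of $Z$ lie in the relation ideal, so there is a minimal relation $r\colon u\rightsquigarrow w$ supported along $Z$; it contributes a degree-$1$ arrow $\rho\colon w\to u$, while $Z$ itself supplies a degree-$0$ path $q\colon w\rightsquigarrow u$ (continue around the cycle from $w$ back to $u$). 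Thus $\rho$ and $q$ are parallel of different degrees.

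Using this parallel pair I would let $N$ be the string module of the bi-infinite walk
\[ \cdots\ \xrightarrow{\ \rho\ }\ u\ \xleftarrow{\ q\ }\ w\ \xrightarrow{\ \rho\ }\ u\ \xleftarrow{\ q\ }\ w\ \xrightarrow{\ \rho\ }\ \cdots, \]
that is, place a copy of $k$ at each vertex of the walk, let $\rho$ and the arrows making up $q$ act as the identity along the walk and let every other arrow act as zero, with the grading arranged so that each $\rho$ raises the degree by $1$ and $q$ preserves it. Since one period of the walk has total degree $1\neq 0$, the module $N$ is spread over infinitely many degrees with finite-dimensional graded pieces, and it is periodic, $N\ds{1}\iso N$. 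The absence of finite-dimensional summands would then follow as in Lemmas~\ref{lemma.no_finite_summands} and~\ref{lemma.periodic}, combining periodicity with the computation of graded homomorphism spaces in Observation~\ref{obs.hom_bounded_unbounded}. Granting this, Theorem~\ref{theo.DowborSkowronski} produces a non-gradable finite-dimensional $\widetilde{\Lambda}$-module, and Theorem~\ref{theorem.imgr} shows the corresponding object of $\mathscr{C}_\Lambda$ is not in $\Im\pi$, so $\pi$ is not dense. For the addendum that Conjecture~\ref{conjecture} then holds in this case, I would simply combine this with the already-known implication that piecewise hereditary algebras have dense $\pi$: non-density forces $\Lambda$ not to be piecewise hereditary.

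The main obstacle is verifying that the bi-infinite walk is genuinely relation-avoiding, so that $N$ is an honest $\widetilde{\Lambda}$-module, and that it decomposes no finite-dimensional summand off. Concretely one must choose $u,w$ and the relation $r$ on the cycle so that the degree-$0$ return path $q$ is a non-zero path in $\Lambda$ and so that no relation of $\widetilde{\Lambda}$ appears as a consecutive directed sub-walk of $\cdots\,\rho\,q^{-1}\,\rho\,q^{-1}\,\cdots$. This is exactly where the oriented-cycle hypothesis does the essential work: the cycle $Z$ lets the walk return from $w$ to $u$ in degree $0$ and repeat indefinitely, which is impossible when $Q_\Lambda$ is acyclic, where every directed path has bounded length. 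I expect to dispose of the relation-avoidance by taking $r$ to be a minimal relation along $Z$ and $q$ a complementary arc of $Z$ chosen as short as possible (in the example of the Remark, $r=\alpha\beta$ and $q=\gamma$ work verbatim), reducing the check to the finitely many relations of $\widetilde{\Lambda}$ incident to the vertices $u$ and $w$.
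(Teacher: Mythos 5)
Your reduction is exactly the paper's: by Theorem~\ref{theorem.imgr} it suffices to exhibit a non-gradable $\widetilde{\Lambda}$-module, and by Theorem~\ref{theo.DowborSkowronski} it suffices to produce an indecomposable periodic graded $\widetilde{\Lambda}$-module with finite-dimensional graded pieces; your bi-infinite walk alternating a degree-$1$ arrow $\rho$ (from a minimal relation along the cycle) with a degree-$0$ return path $q$ is morally the paper's construction. However, there are two genuine gaps, and both are precisely the points where the paper does real work. First, your construction needs the complementary arc $q\colon w \rightsquigarrow u$ of the cycle to be non-zero in $\Lambda$, and this can simply fail: the arc is determined by the choice of relation and the cycle (so ``chosen as short as possible'' changes nothing), and it may itself contain relations. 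The paper's proof of Theorem~\ref{thm.cycles} handles exactly this failure by iterating: it takes a \emph{maximal} sequence of minimal relations (maximality, via Remark~\ref{rem.seq_rel}(3) and $\tau_2$-finiteness, rules out a cyclic sequence of relations), and whenever a remaining segment of the cycle gives a zero map it inserts further relation sequences, until the cycle is covered by alternating non-zero maps $P_i \to Q_i$ and sequences of minimal relations; this is why Proposition~\ref{prop.zigzag} is stated for arbitrary $\ell \geq 1$, whereas your period-one walk corresponds only to the case $\ell = 1$ and cannot be salvaged in general.

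Second, $\widetilde{\Lambda}$ is not a monomial or special biserial algebra, so your ``string module'' is neither obviously well-defined nor obviously indecomposable. Relations of $\widetilde{\Lambda}$ can be linear combinations of paths, so your relation-avoidance criterion --- that no relation appears as a consecutive directed sub-walk --- is insufficient: a relation $p_1 + \lambda p_2$ with $p_1$ along the walk and $p_2$ off it would act as the identity rather than zero on your module. Likewise indecomposability of string modules is a theorem about string algebras and does not transfer; and your appeal to Lemmas~\ref{lemma.no_finite_summands} and~\ref{lemma.periodic} is misplaced, since those concern the limit module $\mathtt{M}\pi\mathscr{A}_X$ of the fractional Calabi-Yau section and rest on Proposition~\ref{prop.limit}, which is unavailable here. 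The paper circumvents all of this by never constructing the module directly: it works with the graded projective \emph{presentation} $\coprod A_i \to \coprod B_i$ in $\proj\gr\widetilde{\Lambda}$ (noting a graded module is indecomposable and periodic if and only if its presentation is) and proves indecomposability by the dedicated zigzag Lemma~\ref{lemma.zigzag}, whose non-factorization hypotheses are verified from the degree considerations and from the tensor-algebra structure of $\widetilde{\Lambda}$ (Theorem~\ref{theo.tensoralg} via Remark~\ref{rem.seq_rel}(2): no relation of $\widetilde{\Lambda}$ has a summand that is a product of degree-$1$ arrows). Supplying an analogue of that lemma, or restricting to a setting where string combinatorics are actually valid, is the missing core of your argument.
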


Before we give a proof for this result in Subsection~\ref{subsect.proof_cycles} we need to prepare a technical result on the indecomposability of certain zigzag-shaped complexes in Subsection~\ref{subsect.zigzag}.

\subsection{Indecomposability of zigzags} \label{subsect.zigzag}

\begin{lemma} \label{lemma.zigzag}
Let $\mathscr{A}$ be an additive $k$-category. Assume we are given the following objects and morphisms
\[ \begin{tikzpicture}[xscale=2.3,yscale=1.5]
 \foreach \i in {-1,...,2}
  \node (A\i) at (\i, 1) {$A_{\i}$};
 \foreach \i in {-2,...,2}
  \node (B\i) at (\i+.5, 0) {$B_{\i}$};
 \foreach \i/\j in {-2/-1,-1/0,0/1,1/2}
  {
   \draw [->] (A\j) -- node [above left=-4pt] {$g_{\j}$} (B\i);
   \draw [->] (A\j) -- node [above right=-4pt] {$f_{\j}$} (B\j);
  }
 \node at (-2,.5) {$\cdots$};
 \node at (3,.5) {$\cdots$};
\end{tikzpicture} \]
such that for all $i$
\begin{itemize}
\item $A_i$ and $B_i$ have local endomorphism ring,
\item $f_i \not \in \Hom_{\mathscr{A}}(A_i, A_{i+1}) \cdot g_{i+1} \cdot \End_{\mathscr{A}}(B_i) \\ \text{ } \qquad + \End_{\mathscr{A}}(A_i) \cdot g_i \cdot \Hom_{\mathscr{A}}(B_{i-1}, B_i)$, and
\item $g_i \not \in \End_{\mathscr{A}}(A_i) \cdot f_i \cdot \Hom_{\mathscr{A}}(B_{i+1}, B_i) \\ \text{} \qquad + \Hom_{\mathscr{A}}(A_i, A_{i-1}) \cdot f_{i-1} \cdot \End_{\mathscr{A}}(B_{i-1})$.
\end{itemize}
(Note that the last two requirements essentially mean that no morphism factors through its neighbors.) Assume moreover that the $A_i$ are pairwise non-isomorphic, and the $B_i$ are pairwise non-isomorphic.

Then the complex
\[ \coprod A_i \tol[140]{\left( \begin{smallmatrix} \ddots & \ddots && \\ & f_i & g_i & \\ && f_{i-1} & \ddots \\ &&& \ddots \end{smallmatrix} \right)} \coprod B_i \]
is indecomposable.
\end{lemma}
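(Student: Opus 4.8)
The plan is to prove indecomposability by analysing the endomorphism ring $E$ of the two-term complex $C = \bigl(\coprod A_i \xrightarrow{d} \coprod B_i\bigr)$, where $d$ is the displayed zigzag matrix. An endomorphism of $C$ is a pair $(\phi,\psi)$ with $\phi = (\phi_{ji}) \in \End_{\mathscr{A}}(\coprod A_i)$ and $\psi = (\psi_{ji}) \in \End_{\mathscr{A}}(\coprod B_i)$ satisfying $\psi d = d \phi$. Writing this out componentwise gives, for all $k,i$,
\[ \psi_{ki} f_i + \psi_{k,i-1} g_i = f_k \phi_{ki} + g_{k+1} \phi_{k+1,i}. \]
I will only use the two near-diagonal instances $k=i$ and $k=i-1$, which after rearranging read
\[ \psi_{ii} f_i - f_i \phi_{ii} = g_{i+1} \phi_{i+1,i} - \psi_{i,i-1} g_i \]
and
\[ \psi_{i-1,i-1} g_i - g_i \phi_{ii} = f_{i-1} \phi_{i-1,i} - \psi_{i-1,i} f_i. \]
The strategy is to extract from these two relations a single scalar attached to every index, and thereby build a ring homomorphism $E \to k$ exhibiting $E$ as local; a local ring has no idempotents besides $0$ and $1$, which is exactly indecomposability.

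First I record the input from the hypotheses. Since the $A_i$ (resp.\ the $B_i$) are pairwise non-isomorphic indecomposables with local endomorphism rings, every off-diagonal entry $\phi_{ji}$, $\psi_{ji}$ ($j \neq i$) lies in $\operatorname{rad}\mathscr{A}$, while $\phi_{ii} = \lambda_i + \phi_{ii}'$ and $\psi_{ii} = \mu_i + \psi_{ii}'$ with $\lambda_i,\mu_i \in k$ and $\phi_{ii}' \in \mathfrak{m}_{A_i}$, $\psi_{ii}' \in \mathfrak{m}_{B_i}$ in the (nilpotent, by Hom-finiteness) maximal ideals. Next I introduce the two ``forbidden'' subspaces appearing in the statement, namely $F_i = \Hom(A_i,A_{i+1}) g_{i+1} \End(B_i) + \End(A_i) g_i \Hom(B_{i-1},B_i) \subseteq \Hom(A_i,B_i)$ and its $g_i$-analogue $G_i \subseteq \Hom(A_i,B_{i-1})$. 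The point is that $F_i$ is stable under precomposition by $\End(A_i)$ and postcomposition by $\End(B_i)$, so $V_i := \Hom(A_i,B_i)/F_i$ is an $\End(B_i)$-$\End(A_i)$-bimodule, and the hypothesis on $f_i$ says precisely that the class $\bar{f_i}$ is nonzero in $V_i$; symmetrically $\bar{g_i} \neq 0$ in $W_i := \Hom(A_i,B_{i-1})/G_i$.

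The heart of the argument is that the \emph{right-hand sides} of the two rearranged relations lie in $F_i$ and $G_i$ respectively: each summand factors through a neighbouring map ($g_{i+1}$ or $g_i$ in the first case, $f_{i-1}$ or $f_i$ in the second) in exactly the pattern defining these subspaces. Reducing the first relation modulo $F_i$ therefore gives $\psi_{ii}\cdot\bar{f_i} = \bar{f_i}\cdot\phi_{ii}$ in $V_i$, that is $(\mu_i - \lambda_i)\bar{f_i} = \bar{f_i}\phi_{ii}' - \psi_{ii}'\bar{f_i}$. If $\mu_i \neq \lambda_i$, this exhibits $\bar{f_i}$ as a fixed point of the operator $x \mapsto (\mu_i-\lambda_i)^{-1}(x\phi_{ii}' - \psi_{ii}' x)$; iterating $n$ times writes $\bar{f_i}$ as a sum of terms ${\psi_{ii}'}^{a}\,\bar{f_i}\,{\phi_{ii}'}^{b}$ with $a+b=n$, all of which vanish once $n$ exceeds the sum of the nilpotency indices of $\mathfrak{m}_{A_i}$ and $\mathfrak{m}_{B_i}$. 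This forces $\bar{f_i}=0$, contradicting the hypothesis; hence $\lambda_i = \mu_i$. Applying the same reasoning to the second relation and to $\bar{g_i}\neq 0$ yields $\lambda_i = \mu_{i-1}$. Since the index set is connected, all $\lambda_i$ and all $\mu_i$ agree with one common scalar $\nu(\phi,\psi) \in k$, and $(\phi,\psi) \mapsto \nu$ is a $k$-algebra homomorphism $E \to k$ (multiplicativity holds because all off-diagonal entries are radical, so the diagonal scalar of a product is the product of the scalars).

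It remains to conclude locality. Any element of $\ker\nu$ has all matrix entries in $\operatorname{rad}\mathscr{A}$, and such an endomorphism is never invertible: comparing the $(i,i)$-entry of $\phi\phi^{-1}=\mathrm{id}$ (a finite sum, by column-finiteness of the matrices) would give $1 \in \operatorname{rad}\End(A_i)$, which is absurd. Thus $\ker\nu$ consists of non-units, and for an idempotent $e$ with $\nu(e)=0$ one concludes $e=0$ by the standard fact that in a Hom-finite Krull--Schmidt category an idempotent lying in the radical of the endomorphism ring vanishes (for a finite zigzag this is immediate, as that radical is then nilpotent); replacing $e$ by $1-e$ handles $\nu(e)=1$. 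Hence $C$ is indecomposable. I expect the middle step to be the genuine obstacle: the real work is setting up $F_i$ and $G_i$ as bimodules so that the non-factorisation hypotheses read off as $\bar{f_i}\neq 0$ and $\bar{g_i}\neq 0$, and verifying that the off-diagonal contributions in the commutativity relations are \emph{exactly} absorbed by $F_i$ and $G_i$; after that, nilpotency of the local radicals propagates a single scalar along the whole zigzag and the endgame is routine.
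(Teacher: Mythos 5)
Your proposal is correct and is essentially the paper's own argument: you use the same two near-diagonal components of the commutation relation $\psi d = d\phi$, absorb the cross terms into exactly the factorization subspaces forbidden by the hypotheses, and eliminate the difference of diagonal scalars by the same ``scalar plus nilpotent is invertible'' mechanism (your iteration of $x \mapsto (\mu_i-\lambda_i)^{-1}(x\phi_{ii}' - \psi_{ii}'x)$ is precisely the paper's invertibility of $1\otimes 1 + r_{ii}\otimes 1 - 1\otimes s_{ii}$ in $\End_{\mathscr{A}}(A_i)\otimes\End_{\mathscr{A}}(B_i)$). The only differences are packaging -- you handle arbitrary chain endomorphisms and extract a character $\nu\colon E \to k$, where the paper runs the case analysis directly on the support sets $I,J$ of an idempotent written as $\pi_I + (\mathrm{radical})$ -- and the finiteness points you lean on (Hom-finiteness giving nilpotent maximal ideals, column-finiteness, and the vanishing of an idempotent with all entries radical over the infinite index set) are exactly the ones the paper also uses tacitly.
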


\begin{proof}
Since the $A_i$ are pairwise non-isomorphic and have local endomorphism rings any idempotent endomorphism of $\coprod A_i$ is of the form $\pi_I + (r_{ij})$ for some $I \subseteq \mathbb{Z}$, $\pi_I$ the projection to the summands $A_i$ with $i \in I$, and radical morphisms $r_{ij} \colon A_i \to A_j$. Similarly an idempotent of $\coprod B_i$ is of the form $\pi_J + (s_{ij})$.

Thus an idempotent of the complex of the lemma is a commutative diagram as below.
\[ \begin{tikzpicture}[xscale=4,yscale=2]
 \node (A) at (0,1) {$\coprod A_i$};
 \node (B) at (1,1) {$\coprod B_i$};
 \node (C) at (0,0) {$\coprod A_i$};
 \node (D) at (1,0) {$\coprod B_i$};
 \draw [->] (A) -- node [above] {$( \ddots )$} (B);
 \draw [->] (C) -- node [above] {$( \ddots )$} (D);
 \draw [->] (A) -- node [left] {$\pi_I + (r_{ij})$} (C);
 \draw [->] (B) -- node [right] {$\pi_J + (s_{ij})$} (D);
\end{tikzpicture} \]
Assume for some $i$ we have $i \in I$ but $i \not\in J$. Composing the above diagram with the injection of $A_i$ and the projection to $B_i$ we obtain the following diagram (omitting summands that do not contribute anything).
\[ \begin{tikzpicture}[xscale=4,yscale=2]
 \node (A) at (0,1) {$A_i$};
 \node (B) at (1,1) {$B_{i-1} \oplus B_i$};
 \node (C) at (0,0) {$A_i \oplus A_{i+1}$};
 \node (D) at (1,0) {$B_i$};
 \draw [->] (A) -- node [above] {$(g_i, f_i)$} (B);
 \draw [->] (C) -- node [above] {$\left( \begin{smallmatrix} f_i \\ g_{i+1} \end{smallmatrix} \right)$} (D);
 \draw [->] (A) -- node [left] {$(1 + r_{ii}, r_{i,i+1})$} (C);
 \draw [->] (B) -- node [right] {$\left( \begin{smallmatrix} s_{i-1,i} \\ s_{ii} \end{smallmatrix} \right)$} (D);
\end{tikzpicture} \]
Thus
\begin{align*}
&& (g_i, f_i) \begin{pmatrix} s_{i-1,i} \\ s_{ii} \end{pmatrix} & = (1 + r_{ii}, r_{i,i+1}) \begin{pmatrix} f_i \\ g_{i+1} \end{pmatrix} \\
& \iff & g_i s_{i-1,i} + f_i s_{ii} & = f_i + r_{ii} f_i + r_{i,i+1} g_{i+1} \\
& \iff & f_i + r_{ii} f_i - f_i s_{ii} & = g_i s_{i-1,i} - r_{i,i+1} g_{i+1}
\end{align*}
Note that $r_{ii} \otimes 1 - 1 \otimes s_{ii} \in \Rad (\End_{\mathscr{A}}(A_i) \otimes \End_{\mathscr{A}}(B_i))$, so $1 \otimes 1 + r_{ii} \otimes 1 - 1 \otimes s_{ii}$ is invertible. Thus
\[ f_i \in \Hom_{\mathscr{A}}(A_i, A_{i+1}) \cdot g_{i+1} \cdot \End_{\mathscr{A}}(B_i) + \End_{\mathscr{A}}(A_i) \cdot g_i \cdot \Hom_{\mathscr{A}}(B_{i-1}, B_i) \]
contradicting the third point of the assumption.

Similarly the cases
\begin{itemize}
\item $i \in I$ and $i-1 \not\in J$,
\item $i \not\in I$ and $i \in J$, and
\item $i \not\in I$ and $i-1 \in J$
\end{itemize}
lead to contradictions.

It follows that $I = J = \emptyset$ or $I = J = \mathbb{Z}$, so the idempotent is trivial.
\end{proof}

For the rest of this section the following piece of notation will be useful.

\begin{definition}
Let $\Lambda$ be a finite-dimensional algebra. Let $P_0$ and $P_{\ell}$ be indecomposable projective modules. A \emph{sequence of minimal relations} of length $\ell$ from $P_0$ to $P_{\ell}$ is a sequence
\[ \top P_0 = S_0, S_1, \ldots, S_{\ell} = \top P_{\ell} \]
of simple modules, such that $\Ext_{\Lambda}^2(S_i, S_{i-1}) \neq 0$ for all $i \in \{1, \ldots, \ell\}$. (Note that a $2$-extension between simples corresponds to a minimal relation in the opposite direction in the quiver of the algebra; this motivates the name.)
\end{definition}

\begin{remark} \label{rem.seq_rel}
For a $\tau_2$-finite algebra $\Lambda$ we have the following:
\begin{enumerate}
\item A minimal sequence of relations corresponds to a sequence of degree $1$ arrows in the quiver of $\widetilde{\Lambda}$, and thus to a map of degree $\ell$.
\item By Theorem~\ref{theo.tensoralg} there are no relations in $\widetilde{\Lambda}$ having summands which are products of degree $1$-arrows. Thus the map corresponding to a minimal sequence of relations is non-zero, and not equal to any other linear combination of paths.
\item In particular the length of sequences of minimal relations is bounded above by the maximal degree of $\widetilde{\Lambda}$.
\end{enumerate}
\end{remark}

\begin{proposition} \label{prop.zigzag}
Let $\Lambda$ be a $\tau_2$-finite algebra. Assume there are indecomposable projective modules $P_1, \ldots, P_{\ell}$ and $Q_1, \ldots, Q_{\ell}$ such that for all $i \in \{1, \ldots, \ell\}$ we have
\begin{itemize}
\item there is a non-zero non-isomorphisms $P_i \to Q_i$,
\item there is a sequence of minimal relations from $Q_{i-1}$ to $P_i$ (here $Q_0 = Q_{\ell}$).
\end{itemize}
Then the functor $\pi \colon \mathscr{D}_{\Lambda} \to \mathscr{C}_{\Lambda}$ is not dense.
\end{proposition}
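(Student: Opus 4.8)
The plan is to manufacture, out of the given data, an infinite-dimensional graded $\widetilde{\Lambda}$-module $M$ whose graded pieces are finite-dimensional but which has no non-zero finite-dimensional direct summand, and then to run the endgame of Theorem~\ref{theorem.fracCY}. Granting such an $M$, Theorem~\ref{theo.DowborSkowronski} produces a finite-dimensional non-gradable $\widetilde{\Lambda}$-module $N$; since $\mathtt{M}$ induces the equivalence $\mathscr{C}_{\Lambda}/(\pi\Lambda[1]) \xrightarrow{\approx} \mod \widetilde{\Lambda}$, we may write $N \iso \mathtt{M} X$ for some $X \in \mathscr{C}_{\Lambda}$, and then Theorem~\ref{theorem.imgr} shows $X \notin \Im \pi$, so that $\pi$ is not dense. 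The only new work is the construction of $M$, and this is where Lemma~\ref{lemma.zigzag} enters.

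I would build $M$ inside $\proj\gr \widetilde{\Lambda}$, using the Yoneda equivalence $\add\{\mathbb{S}_2^i \Lambda\} \xrightarrow{\approx} \proj\gr \widetilde{\Lambda}$ from the proof of Theorem~\ref{theorem.imgr}, under which each indecomposable projective $\Lambda$-module corresponds to an indecomposable graded projective. The non-isomorphism $P_i \to Q_i$ gives a degree-$0$ morphism $f_i \colon \mathtt{M}\pi P_i \to \mathtt{M}\pi Q_i$, and, by Remark~\ref{rem.seq_rel}, the sequence of minimal relations from $Q_{i-1}$ to $P_i$ gives a morphism $g_i \colon \mathtt{M}\pi P_i \to \mathtt{M}\pi Q_{i-1}$ that is homogeneous of degree $m_i \geq 1$ (its length). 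Writing $A_i = \mathtt{M}\pi P_i$ and $B_i = \mathtt{M}\pi Q_i$ we have exactly $f_i \colon A_i \to B_i$ and $g_i \colon A_i \to B_{i-1}$, the shape required by Lemma~\ref{lemma.zigzag}. Using the cyclic identification $Q_0 = Q_{\ell}$ I would unfold this finite cyclic datum into a $\mathbb{Z}$-indexed zigzag, choosing grading shifts $\left< \cdot \right>$ that make all $f_i$ and $g_i$ homogeneous. Because every $m_i \geq 1$, one full turn around the cycle raises the grading by $D = \sum_{i=1}^{\ell} m_i > 0$; consequently the shifts attached to the $A_i$ (equivalently the $B_i$) are strictly monotone in $i$, so the $A_i$ are pairwise non-isomorphic as graded modules, and likewise the $B_i$, even though the underlying projectives recur with period $\ell$.

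It then remains to verify the hypotheses of Lemma~\ref{lemma.zigzag}. Each $A_i$, $B_i$ is an indecomposable graded projective and so has local endomorphism ring (in degree $0$ this ring is $e\Lambda e$ for a primitive idempotent $e$). The non-factorization conditions on the $f_i$ hold automatically: each relevant composite must route through some $g_j$ (of strictly positive degree) via a homomorphism between two of the projective summands which would necessarily have strictly negative degree, and such homomorphisms vanish since $\widetilde{\Lambda}$ is non-negatively graded. The conditions on the $g_i$ are the crux, and the place where both hypotheses of the proposition are used: I must show $g_i$ does not factor through its neighbours $f_i$ and $f_{i-1}$. Here the flanking homomorphisms now have positive degree, so vanishing is no longer free; instead one argues that $g_i$ is, by Remark~\ref{rem.seq_rel}, a product of degree-$1$ arrows of $\widetilde{\Lambda}$ — a pure power of the degree-$1$ generators — which is non-zero and linearly independent from every path traversing a degree-$0$ arrow. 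Any composite through $f_i$ or $f_{i-1}$, however, does traverse a degree-$0$ arrow, since a non-isomorphism between indecomposable projective $\Lambda$-modules is a radical map, i.e.\ a combination of paths of positive length in the quiver of $\Lambda = \widetilde{\Lambda}_0$. Hence $g_i$ lies in no such span. I expect this degree-versus-path-type bookkeeping to be the main obstacle.

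Lemma~\ref{lemma.zigzag} then yields that the two-term complex $\Phi = (f_i, g_i) \colon \coprod_i A_i \to \coprod_i B_i$ is indecomposable, and I would set $M = \Cok \Phi$. Since $\widetilde{\Lambda}$ is finite-dimensional and the grading shifts tend to $\pm\infty$, only finitely many $B_i$ meet any fixed degree, so every graded piece of $M$ is finite-dimensional; on the other hand $\Phi$ is a radical map, so the tops of all the $B_i$ survive in $M$, whence $M$ is non-zero in infinitely many degrees (indeed $M\left< D \right> \iso M$) and is infinite-dimensional. As $\Phi$ is a radical map of projectives with no summand of the form $(P \to 0)$ or $(0 \to P)$, it is a minimal projective presentation, so the indecomposability of $\Phi$ transfers to $M$; being infinite-dimensional, the indecomposable module $M$ can have no non-zero finite-dimensional direct summand. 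This $M$ is the module demanded in the first paragraph, and the proof finishes by feeding it into Theorem~\ref{theo.DowborSkowronski} and then applying Theorem~\ref{theorem.imgr} as described there.
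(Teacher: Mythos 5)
Your proposal is correct and takes essentially the same route as the paper: apply Lemma~\ref{lemma.zigzag} in $\proj\gr \widetilde{\Lambda}$ to the $\mathbb{Z}$-unfolded cyclic zigzag of degree-$0$ maps $f_i$ and positive-degree relation maps $g_i$ (with strictly monotone grading shifts making the $A_i$ and $B_i$ pairwise non-isomorphic, the second hypothesis holding by degree reasons and the third by Remark~\ref{rem.seq_rel}(2)), then conclude via Theorem~\ref{theo.DowborSkowronski} and Theorem~\ref{theorem.imgr}. The only cosmetic difference is that you pass to $M = \Cok \Phi$ and argue minimality of the presentation, whereas the paper stays at the level of the two-term complex, noting that a graded module is indecomposable and periodic if and only if its projective presentation is.
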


\begin{proof}[Proof of Proposition~\ref{prop.zigzag}]
By Theorem~\ref{theorem.imgr} it suffices to show that there is a non-gradable $\widetilde{\Lambda}$-module. For this, by Theorem~\ref{theo.DowborSkowronski}, it suffices to find an indecomposable periodic graded $\widetilde{\Lambda}$-module with finite dimensional graded pieces. Finally note that a graded $\widetilde{\Lambda}$-module is indecomposable and periodic if and only if its projective presentation is.

Now apply Lemma~\ref{lemma.zigzag} for $\mathscr{A} = \proj\gr \widetilde{\Lambda}$, $A_i = P_{\overline{\iota}} \left< a_i \right>$ for $\overline{\iota} - i \in \ell \mathbb{Z}$ and certain $a_i$, and $B_i = Q_{\overline{\iota}} \left< a_i \right>$. Further we let $f_i \colon A_i \to B_i$ be a map from the first point of the proposition, and $g_i \colon A_i \to B_{i-1}$ map corresponding to the sequence of minimal relations in the second point of the proposition.

Since the $g_i$ are all of positive degree, and the $f_i$ are of degree $0$, it follows that the $a_i$ are pairwise different. It follows that the $A_i$ and the $B_i$ are pairwise non-isomorphic.

The three points of Lemma~\ref{lemma.zigzag} are easily verified: The second one holds since the $f_i$ are of degree $0$ and the $g_i$ are of positive degree. The final one follows form Remark~\ref{rem.seq_rel}(2).

Thus, by Lemma~\ref{lemma.zigzag}, the complex
\[ \coprod A_i \to[40] \coprod B_i \]
is indecomposable. It is periodic by construction, and has finite dimensional graded pieces since $\coprod B_i$ does. Thus the claim follows.
\end{proof}

\subsection{Proof of Theorem~\ref{thm.cycles}} \label{subsect.proof_cycles}

\begin{lemma} \label{lemma.relation_exists}
Let $\Lambda$ be a finite-dimensional algebra. Assume there is a sequence of arrows
\[ i_0 \tol[30]{\scriptstyle \alpha_1} i_1 \tol[30]{\scriptstyle{\alpha_2}} \cdots \tol[30]{\scriptstyle{\alpha_n}} i_n \] in the quiver of $\Lambda$, such that the corresponding map $P_{i_0} \to P_{i_n}$ vanishes. Then there are $a < b$ such that there is a minimal relation $i_a \to i_b$. Equivalently $\Ext_{\Lambda}^2(\top P_{i_b}, \top P_{i_a}) \neq 0$.
\end{lemma}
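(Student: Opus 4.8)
The plan is to analyze a path $\alpha_n \cdots \alpha_1 \colon P_{i_0} \to P_{i_n}$ that becomes zero in $\Lambda$ and extract from this vanishing a minimal relation between two of the intermediate vertices. The key conceptual point is that in a path algebra modulo relations, a composite of arrows vanishes precisely because it lies in the ideal generated by the (minimal) relations. I would work with the map $P_{i_0} \to P_{i_n}$ given by right-multiplication by the path $p = \alpha_n \cdots \alpha_1$; saying this map vanishes is exactly saying that $p = 0$ in $\Lambda$. The statement to prove is then a structural fact about how a single vanishing path forces a minimal relation to sit \emph{inside} that path, i.e.\ between two of the vertices $i_a$ and $i_b$ that the path passes through.

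\textbf{The main reduction.} First I would choose the path $p$ to be, among all sub-paths of $\alpha_n\cdots\alpha_1$ that vanish, one of \emph{minimal length}. Replacing $i_0, \ldots, i_n$ by the corresponding consecutive stretch of vertices, I may then assume that $p$ itself vanishes but every proper consecutive sub-path is non-zero. The aim is to show that for this minimal vanishing path, the relation it witnesses is in fact a minimal relation running from $i_0$ to $i_n$, so one can take $a = 0$ and $b = n$. Here ``minimal relation'' must be interpreted via its characterization in the paper: by the discussion following Theorem~\ref{theo.tensoralg}, minimal generators of $\widetilde{\Lambda}$ in degree $1$ correspond to minimal relations, and $\Ext^2_\Lambda(\top P_{i_b}, \top P_{i_a}) \neq 0$ is the cohomological reformulation. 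So the real content is: \emph{a minimal vanishing path gives a non-split $2$-extension between the simple tops of its endpoints.}

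\textbf{Building the $2$-extension.} To produce the class in $\Ext^2_\Lambda(S_{i_n}, S_{i_0})$ (writing $S_j = \top P_j$), I would proceed homologically. Consider the beginning of a minimal projective resolution of $S_{i_n}$. The first syzygy $\Omega S_{i_n}$ is $\Rad P_{i_n}$, whose top involves the arrows ending at $i_n$, in particular $\alpha_n$. The second syzygy records the relations among these, i.e.\ the relations starting with those arrows. I expect that the minimality of the vanishing path $p$ guarantees that the relation it expresses cannot be decomposed as a combination of shorter relations composed with paths, and hence that it contributes a genuine minimal generator to $\Omega^2 S_{i_n}$ whose socle-type contribution involves $S_{i_0}$. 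Dualizing appropriately, this yields $\Ext^2_\Lambda(S_{i_n}, S_{i_0}) \neq 0$, which is exactly a minimal relation from $i_0$ to $i_n$.

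\textbf{The main obstacle.} The delicate step is the passage from ``$p$ is a minimal vanishing path'' to ``$p$ contributes a minimal generator in degree $2$ of the resolution,'' i.e.\ that no cancellation in $\Omega^2 S_{i_n}$ destroys the contribution of $p$. A priori the relation $p = 0$ could be a \emph{consequence} of other relations among the arrows without itself being minimal, or the socle element it would produce could be killed by other relations sharing the same endpoints. I would handle this by exploiting minimality of length once more: any expression of $p$ as a combination $\sum q_j r_j s_j$ of relations $r_j$ flanked by paths $q_j, s_j$ must, on comparing lengths, involve at least one relation $r_j$ whose supporting path is a consecutive sub-path of $p$; minimality of $p$ forces that sub-path to be all of $p$, so $p$ \emph{is} (up to scalar and longer corrections) a minimal relation itself. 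Making this length-and-support bookkeeping precise — essentially that the ideal of relations is generated in the ``expected'' way so that a shortest member is automatically a minimal generator — is where the care is needed, and it is the crux on which the whole lemma rests.
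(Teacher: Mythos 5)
Your ``main obstacle'' paragraph actually contains, in its first half, the paper's entire (two-line) proof: since the path $p = \alpha_1\cdots\alpha_n$ vanishes in $\Lambda$, it lies in the ideal of relations, so $p = \sum_j x_j r_j y_j$ with $r_j$ minimal relations and $x_j, y_j$ paths; comparing coefficients in the path basis, some $r_j$ must contain a nonzero scalar multiple of a consecutive subpath $\alpha_{a+1}\cdots\alpha_b$ of $p$ in its support, and then $r_j$ is itself a minimal relation from $i_a$ to $i_b$. At that point you are done, since the lemma only asks for \emph{some} $a < b$. The genuine gap is your final step: the claim that after reducing to a vanishing path $p$ of minimal length, ``minimality of $p$ forces that sub-path to be all of $p$,'' i.e.\ that one may take $a = 0$, $b = n$. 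This is false, and the error is the implicit assumption that a path occurring in the support of a minimal relation must itself vanish in $\Lambda$ (so that minimality of $p$ would exclude proper subpaths). A minimal relation is a linear combination of parallel paths, and the individual paths in its support are typically nonzero in $\Lambda$; hence a proper, nonvanishing subpath of $p$ can perfectly well appear in the support of a minimal relation.

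Concretely, take the quiver with vertices $1,2,3,4$ and arrows $\alpha \colon 1 \to 2$, $\beta, \delta \colon 2 \to 3$, $\gamma \colon 3 \to 4$ (composing left to right, as in the paper), with ideal $I$ generated by $r_1 = \alpha\beta - \alpha\delta$ and $r_2 = \delta\gamma$. Then
\[ \alpha\beta\gamma = (\alpha\beta - \alpha\delta)\gamma + \alpha(\delta\gamma) = r_1 \gamma + \alpha r_2 \in I, \]
so the path $p = \alpha\beta\gamma$ vanishes, while $\alpha\beta$ and $\beta\gamma$ are nonzero (the component of $I$ spanned by paths from $1$ to $3$ is $k\cdot r_1$, and from $2$ to $4$ it is $k \cdot r_2$); thus $p$ is a vanishing path all of whose proper subpaths are nonzero. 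Nevertheless there is \emph{no} minimal relation from $1$ to $4$: the component of $I$ from $1$ to $4$ is spanned by $r_1\gamma$ and $\alpha r_2$ and hence lies entirely in $JI + IJ$ (where $J$ is the arrow ideal), so $\Ext^2_\Lambda(\top P_4, \top P_1) = 0$; the minimal relations run $1 \to 3$ and $2 \to 4$. So your strengthened claim, and with it the homological construction of a class in $\Ext^2_\Lambda(\top P_{i_n}, \top P_{i_0})$, breaks down. The repair is simply to delete the minimal-length reduction and the claim $a = 0$, $b = n$: the path-basis bookkeeping you already wrote down proves the lemma exactly as stated, and is the paper's argument.
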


\begin{proof}
Since the map $P_{i_0} \to P_{i_n}$ vanishes we have
\[ \alpha_1 \cdots \alpha_n = \sum_i x_i r_i y_i \]
for some $x_i$ and $y_i$, and minimal relations $r_i$. With respect to the basis consisting of paths in the quiver we see that at least one $r_i$ has a non-zero scalar multiple of a subpath $\alpha_{a+1} \cdots \alpha_b$ of $\alpha_1 \cdots \alpha_n$ as a summand. Thus we have a minimal relation $i_a \to i_b$.
\end{proof}

\begin{proof}[Proof of Theorem~\ref{thm.cycles}]
We may assume that the oriented cycle in the quiver of $\Lambda$ is
\[ 0 \to 1 \to \cdots \to n \to 0. \]
Since $\Lambda$ is finite-dimensional the map corresponding to some power of this cycle vanishes, so, by Lemma~\ref{lemma.relation_exists}, there is a minimal relation between two vertices of the cycle. We choose a maximal sequence of minimal relations between vertices of the cycle; note that such a maximal sequence exists by Remark~\ref{rem.seq_rel}(3). We may assume the last relation ends in $0$, and the first one starts in $m \neq 0$ (if $m = 0$ then there is a cyclic sequence of minimal relations, hence there are arbitrarily long sequences of minimal relations, contradicting Remark~\ref{rem.seq_rel}(3)). Thus we obtain a setup as indicated in the following picture
\[ \begin{tikzpicture}[yscale=.7,xscale=-1]
 \node (0) at (0:3) {$0$};
 \node (1) at (30:3) {$1$}; 
 \node (7) at (210:3) {$m$};
 \node (11) at (330:3) {$n$};
 \foreach \i in {2,3,4,5,6,8,9,10}
  \node (\i) at (30*\i:3) {$\circ$};
 \foreach \i/\j in {0/1,1/2,2/3,3/4,4/5,5/6,6/7,7/8,8/9,9/10,10/11,11/0}
  \draw [->] (\i) -- (\j);
 \draw [dashed] (7) -- (10);
 \draw [dashed] (10) -- (0);
\end{tikzpicture} \]
such that no relation among the vertices $0, \ldots, m$ starts in $0$ or ends in $m$.

If the map $P_0 \to P_m$ corresponding to the upper part of the cycle is non-zero, then we are done by Proposition~\ref{prop.zigzag} (with $\ell = 1$, the map for the first point being the one assumed to be non-zero, and the sequence of relations being the sequence from the lower part of the picture above).

Assume now the map $P_0 \to P_m$ vanishes. Then there is at least one minimal relation $i \to j$ with $0 < i < j < m$. Choose a minimal sequence of such relations, such that every one begins where the one before ends. We obtain a setup as indicated in the following picture:
\[ \begin{tikzpicture}[yscale=.7,xscale=-1]
 \node (0) at (0:3) {$0$};
 \node (1) at (30:3) {$1$};
 \node (3) at (90:3) {$a$};
 \node (5) at (150:3) {$b$};
 \node (7) at (210:3) {$m$};
 \node (11) at (330:3) {$n$};
 \foreach \i in {2,4,6,8,9,10}
  \node (\i) at (30*\i:3) {$\circ$};
 \foreach \i/\j in {0/1,1/2,2/3,3/4,4/5,5/6,6/7,7/8,8/9,9/10,10/11,11/0}
  \draw [->] (\i) -- (\j);
 \draw [dashed] (7) -- (10);
 \draw [dashed] (10) -- (0);
 \draw [dashed] (3) -- (5);
\end{tikzpicture} \]
Assume the maps $P_0 \to P_a$ and $P_b \to P_m$ corresponding to the remaining paths (those not covered by relations) are both non-zero. Then we are done by Proposition~\ref{prop.zigzag} (with $\ell = 2$, the maps for the first point being those assumed to be non-zero, and the sequences of relations being the two sequences of relation in the picture above).

If one of the maps $P_0 \to P_a$ and $P_b \to P_m$ is non-zero iterate the argument (i.e.\ find a sequence of relations on it), until all parts of the cycle not covered by relations correspond to non-zero maps.
\end{proof}

\begin{example}
Let $n \in \mathbb{N}$, and $0 < r_1 < s_1 < r_2 < s_2 < \cdots < r_{\ell} < s_{\ell} \leq n$ with $\ell \geq 1$. Let $\Lambda$ be the algebra given by the cyclic quiver
\[ \begin{tikzpicture}[yscale=.6,xscale=-1]
 \node (0) at (0:3) {$0$};
 \node (1) at (40:3) {$1$};
 \node (2) at (80:3) {$2$};
 \node (3) at (120:3) {};
 \node (7) at (280:3) {};
 \node (8) at (320:3) {$n$};
 \draw [->] (7) -- node [below] {$\alpha_n$} (8);
 \draw [->] (8) -- node [below left] {$\alpha_0$} (0);
 \draw [->] (0) -- node [above left] {$\alpha_1$} (1);
 \draw [->] (1) -- node [above] {$\alpha_2$} (2);
 \draw [->] (2) -- node [above] {$\alpha_3$} (3);
 \draw [very thick, dotted] (120:3) arc (120:280:3);
\end{tikzpicture} \]
with relations $\{ \alpha_{r_i} \cdots \alpha_{s_i} \mid i \in \{1, \ldots, \ell\}\}$.

Then $\Lambda$ is $\tau_2$-finite, and the functor $\mathscr{D}_{\Lambda} \to \mathscr{C}_{\Lambda}$ is not dense.
\end{example}

\begin{proof}
We first check that $\Lambda$ is $\tau_2$-finite. Since the relations do not overlap one easily sees that the algebra has global dimension $2$. In the quiver of $\widetilde{\Lambda}$ the arrows of degree $1$ are of the form $b_i \to a_i-1$. The only way to come back to higher labels is via the sequence of arrows $\alpha_{a_i} \cdots \alpha_{b_i}$. But this sequence is a relation. Thus $\widetilde{\Lambda}$ is finite-dimensional, and hence $\Lambda$ is $\tau_2$-finite.

The fact that the functor $\mathscr{D}_{\Lambda} \to \mathscr{C}_{\Lambda}$ is not dense now follows immediately from Theorem~\ref{thm.cycles}.
\end{proof}

\end{document}